\documentclass[12 pt,a4paper,oneside]{amsart}
\usepackage{a4wide}

\usepackage{amsfonts,amsmath,amssymb,amsthm}
\usepackage{mathrsfs}
\usepackage{mathtools}
\usepackage{esint}
\usepackage{graphicx}
\usepackage{subcaption}
\usepackage{tikz}
\usetikzlibrary{quotes,angles,positioning}
\usepackage{enumitem}

\usepackage[T1]{fontenc}
\usepackage{dsfont}
\usepackage{xcolor}
\usepackage{hyperref}
\hypersetup{hidelinks}

\usepackage{algorithm}
\usepackage{algorithmic}

\usepackage{calc}
\usepackage{ragged2e}
\usepackage{setspace}
\newcommand{\FundingLogos}{%
  \raisebox{0pt}{\includegraphics[height=1.5cm]{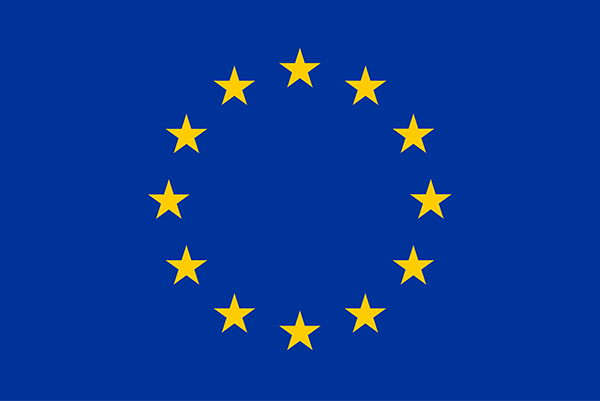}}%
  \hspace{1em}%
  \raisebox{0pt}{\includegraphics[height=1.5cm]{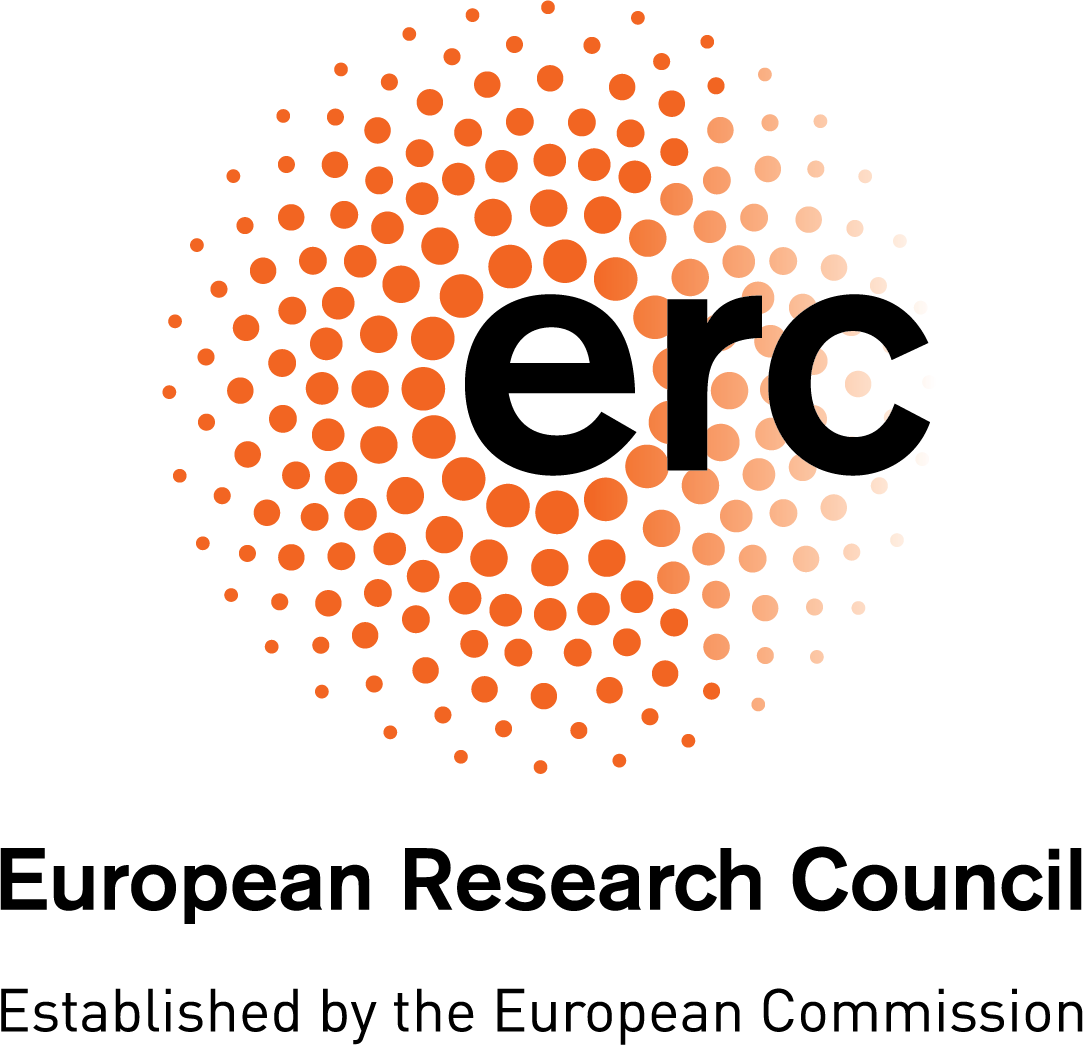}}%
  \hspace{1em}
  \raisebox{0pt}{\includegraphics[height=1.5cm]{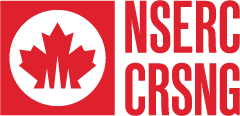}}%
}

\usepackage{tcolorbox}
\tcbset{colback=white,boxrule=0.5pt,arc=2pt,left=4pt,right=4pt,top=4pt,bottom=4pt}

\usepackage{accents}

\newcommand{\widebar}[1]{%
\accentset{\overline{\hphantom{#1}}}{#1}
}

\numberwithin{equation}{section}

\newtheorem{theorem}{Theorem}[section]
\newtheorem{lemma}[theorem]{Lemma}
\newtheorem{proposition}[theorem]{Proposition}
\newtheorem{corollary}[theorem]{Corollary}
\newtheorem{definition}[theorem]{Definition}
\newtheorem{remark}[theorem]{Remark}
\newtheorem{example}[theorem]{Example}
\newtheorem{assumption}[theorem]{Assumption}
\newtheorem{condition}[theorem]{Condition}

\newcommand{\R}{\mathbb{R}}

\newcommand{\N}{\mathbb{N}}

\newcommand{\eps}{\epsilon}

\DeclareMathOperator{\diag}{diag}
\DeclareMathOperator{\rank}{rank}

\DeclareMathOperator*{\argmax}{arg\,max}

\newcommand{\bZ}{\mathbb{Z}}
\newcommand{\bG}{\mathbb{G}}

\newcommand{\FGMart}{\mathrm{FG}_\mathrm{mgle}}

\newcommand{\bL}{\mathbf{L}}
\newcommand{\cL}{\mathcal{L}}
\newcommand{\cW}{\mathcal{W}}

\newcommand{\bE}{\mathbb{E}}

\newcommand{\Image}{\textup{Im}}

\DeclareMathOperator*{\tr}{\mathrm{tr}}
\DeclareMathOperator*{\argmin}{arg\,min}
\newcommand{\Cpl}{\mathrm{Cpl}}
\newcommand{\AW}{\mathcal{AW}}
\newcommand{\BW}{\mathrm{BW}}
\newcommand{\FP}{\mathrm{FP}}
\newcommand{\cFG}{\mathcal{FG}}
\newcommand{\FG}{\mathrm{FG}}
\newcommand{\cFP}{\mathcal{FP}}
\newcommand{\cB}{\mathcal{B}}
\newcommand{\cF}{\mathcal{F}}
\newcommand{\cN}{\mathcal{N}}

\newcommand{\bP}{\mathbb{P}}
\newcommand{\bF}{\mathbb{F}}
\newcommand{\bR}{\mathbb{R}}
\newcommand{\bX}{\mathbb{X}}
\newcommand{\bY}{\mathbb{Y}}
\newcommand{\E}{\mathbb{E}}
\newcommand{\beps}{\boldsymbol{\epsilon}}
\newcommand{\Cov}{\textup{Cov}}

\begin{document}

\title[Adapted Wasserstein Barycenters of Gaussian Processes]{Adapted Wasserstein Barycenters of Gaussian Processes}

\author{
Madhu Gunasingam
\address[Madhu Gunasingam]{Department of Statistical Sciences, University of Toronto}
\email{madhu.gunasingam@mail.utoronto.ca}
\hspace*{0.5cm}
Francesco Mattesini
\address[Francesco Mattesini]{Department of Mathematics, CIT School, Technical University of Munich \& Munich Center for Machine Learning (MCML)}
\email{francesco.mattesini@tum.de}
\\[1ex]
Johannes Wiesel
\address[Johannes Wiesel]{Department of Mathematics, University of Copenhagen}
\email{wiesel@math.ku.dk}
\hspace*{0.5cm}
Ting-Kam Leonard Wong
\address[Ting-Kam Leonard Wong]{Department of Statistical Sciences, University of Toronto}
\email{tkl.wong@utoronto.ca}
}

\begin{abstract}
We study barycenters of filtered Gaussian processes in adapted Wasserstein space. The adapted Wasserstein distance refines classical optimal transport by requiring transport plans to respect the temporal flow of information, making it the natural metric for stochastic systems with filtration constraints, as in stochastic control, mathematical finance, and sequential decision problems. We prove that the \emph{unrestricted} barycenter problem for weighted Fr\'echet means of filtered Gaussian inputs admits a solution with Gaussian underlying law, representable as an enlarged filtered Gaussian process but not necessarily as an ordinary one. The problem decomposes into finitely many classical Bures--Wasserstein barycenter problems for the covariance contributions of the successive innovations.

\noindent
We then treat the \emph{restricted} problem, in which the barycenter is required to be an ordinary filtered Gaussian process, giving a rank and common-noise criterion for when the two problems agree, sufficient conditions for uniqueness, and first order optimality and regularity results. Under a martingale constraint we obtain an explicit solution via martingale projection and Bures--Wasserstein barycenters of the Gaussian increments. Beyond their intrinsic theoretical interest, our results provide a principled way to build representative models from collections of Gaussian stochastic systems, with applications to stochastic optimization, robust finance, and sequential statistical analysis.
\end{abstract}

\date{July 10, 2026}
\maketitle

\section{Introduction and main results}

In recent years, optimal transport has become a central tool for comparing probability measures and for extracting representative distributions from heterogeneous data. One of the most popular ways to construct such representative distributions is the so-called \emph{Wasserstein barycenter}, which provides a notion of average measure that respects the underlying geometry of the Wasserstein space of distributions. Since the landmark work \cite{AC11} of Agueh and Carlier, Wasserstein barycenters have found applications in statistics, machine learning, imaging, economics, and uncertainty quantification. An incomplete list is given by  \cite{rabin2011wasserstein,cuturi2014fast, pass2015multi, peyre2016gromov, alvarez2016fixed, chewi2020gradient, brizzi2025p, brizzi2026q, Brizzi2026hWasserstein}; see also \cite{peyre2019computational, panaretos2020invitation, friesecke2024optimal, chewi2025statistical} and the references therein for an overview. 

\medskip
In many applications however, the objects of interest are not static distributions but stochastic processes or filtered probability laws, and in such settings the classical Wasserstein framework ignores a key structural feature: time and information. This limitation is particularly important when one compares laws on path space under adaptedness constraints. In stochastic control, mathematical finance, and sequential decision problems, couplings between random evolutions should respect the underlying filtrations, so that decisions at a given time can depend only on past and present information. The \emph{adapted Wasserstein distance} was introduced in \cite{aldous2022weak,hoover1984adapted, hellwig1996sequential, pflug2014multistage, backhoff2017causal, lassalle2018causal}  precisely to capture this temporal structure. Unlike the classical Wasserstein distance, it penalizes discrepancies between stochastic processes while enforcing a dynamic consistency condition on transport plans. As a result, it is the natural metric for problems involving nested information, causal transport, and stability of stochastic systems \cite{BBBE2020A, WasSpacStoPro}. In this vein, \emph{adapted barycenters} may be viewed as dynamic analogues of Fréchet means, but in a geometry shaped not only by state discrepancy but also by the admissible flow of information.

\medskip
Adapted barycenters, introduced in \cite{acciaio2025multicausal}, are also of practical interest. They provide a principled way to average stochastic models while preserving temporal information, and thus offer natural candidates for representative models in applications where the timing of information is essential. This is relevant, for example, in scenario reduction for multi-stage stochastic optimization, in model aggregation for time-series distributions, and in robust mathematical finance, where one seeks central objects among competing adapted models. 

\medskip 
In this paper, we provide the first systematic study of adapted Wasserstein barycenters of filtered Gaussian processes. We focus here on discrete time: given a family of Gaussian processes, we study the existence and structure of minimizers of the corresponding Fr\'echet functional on the adapted Wasserstein space of filtered processes. While such properties have been extensively studied for the
classical Wasserstein distance as mentioned above, the adapted setting is intrinsically time-directed, is sensitive to the filtration carried by the laws, and typically lacks a number of tools that are available in the standard optimal transport framework. In particular, the interaction between temporal constraints and convexity properties of the transport functional creates new analytical challenges.

\medskip

Our analysis builds on the geometry of the adapted Bures--Wasserstein space recently established in \cite{GunWon, acciaio2025entropic, ABGHP26, gunasingam2026adapted}. Our main results can be summarized as follows. We show in Theorem~\ref{thm:gaussian.multicausal.optimizer} that adapted $2$-Wasserstein barycenters of Gaussian processes exist by combining the multicausal reformulation of \cite{acciaio2025multicausal} with a finite-dimensional covariance argument. An important finding is that an adapted Wasserstein barycenter need \emph{not} be an adapted Bures--Wasserstein barycenter.
In fact, as was already remarked in \cite{acciaio2025multicausal}, an adapted Wasserstein barycenter need not be naturally filtered, and Section~\ref{sec:counter} gives an explicit example. This observation is essential. There always exists an adapted barycenter with Gaussian underlying law, and one may select a representative driven by $Kd$ independent Gaussian noise coordinates at each time, where $K$ is the number of input processes. Such a representative need not lie in the class of ordinary filtered Gaussian processes, and other, non-Gaussian barycenters may coexist when uniqueness fails.

\medskip

Motivated by this observation, we characterize when this \emph{unrestricted} adapted Wasserstein problem and the \emph{restricted} adapted Wasserstein problem (where we restrict the adapted Wasserstein barycenter to be an ordinary filtered Gaussian process) agree. The characterization is a rank condition on the one-step block correlation matrices, which we call the \emph{common-noise criterion}. We investigate both problems in detail: we give sufficient conditions for uniqueness, derive a necessary first-order condition in the form of a fixed-point equation, and establish regularity of restricted minimizers. We also show that both problems decompose into classical Bures--Wasserstein problems. This yields a natural correspondence between adapted Bures--Wasserstein geometry and a product of standard Bures--Wasserstein spaces. Finally, the first-order characterization leads to computational schemes for the restricted and unrestricted problems, in the spirit of \cite{alvarez2016fixed, puccetti2020computation}.


\subsection{Statement of the main results}\label{sec:statement}
Throughout, $T\in \N = \{1, 2, \ldots\}$ is the time horizon, $d\in \N$ is the spatial dimension, and $K\in \N$ is the number of input processes. We use $s,t,u$ for time indices, $i,j$ for spatial coordinates, and $k,\ell$ for marginal processes. We refer to equation \eqref{eq:AW2} below for the formal definition of the adapted Wasserstein distance $\mathcal{AW}_2$ as well as the quotient space of filtered process $\FP_2$, and to equation \eqref{eqn:filtered.Gaussian} for the definition of the set of filtered Gaussian processes $\cFG(T,d)$. Given filtered Gaussian inputs
\[
\bX^k=\bG^{a^k,L^k}\in\cFG(T,d),\qquad k=1,\ldots,K,
\]
with weights $\lambda_k>0$ satisfying $\sum_k\lambda_k=1$, we study the adapted Wasserstein barycenter problem
\begin{equation}\label{eq:AWbar}
\inf_{\bY\in\FP_2}\sum_{k=1}^K\lambda_k\AW_2^2(\bX^k,\bY).
\end{equation}
As mentioned in the Introduction we call \eqref{eq:AWbar} the unrestricted problem, in contrast to the restricted problem
\begin{equation}\label{eq:AWbar2}
\inf_{\bY\in\cFG(T,d)}\sum_{k=1}^K\lambda_k\AW_2^2(\bX^k,\bY).
\end{equation}

Our main results can be summarized as follows:
\begin{enumerate}[label=(\roman*)]
\item We show in Theorem~\ref{thm:gaussian.multicausal.optimizer} that the unrestricted problem \eqref{eq:AWbar} admits a solution with underlying Gaussian law. It has a representative driven by $Kd$ Gaussian innovation coordinates, and its value separates over time into $T$ compact finite-dimensional optimization problems. We often call these \emph{local} optimization problems.

\item Theorem~\ref{thm:explicit.value} identifies these local problems as classical Bures--Wasserstein barycenter problems for the covariance contributions of the successive innovations. Uniqueness need not hold, even for regular input factors, but Theorem~\ref{thm:uniqueness} shows that Assumption~\ref{cond:uniqueness} provides a sufficient condition for uniqueness in $\FP_2$.

\item An unrestricted barycenter need not have an ordinary $d$-dimensional filtered Gaussian representative. Theorem~\ref{thm:FG-common-noise-criterion} characterizes exactly when such a representative exists, via the common-noise criterion \eqref{eq:common-noise.input.condition}.

\item Theorem~\ref{thm:restricted-FG-barycenters} establishes that the restricted problem \eqref{eq:AWbar2} always admits a minimizer. It is obtained from the same local optimization problems as in (i), subject to an additional rank-$d$ constraint. Proposition~\ref{thm:characterization} shows that restricted minimizers satisfy a Procrustes fixed-point condition, while Theorem~\ref{thm:regularity} proves that they inherit regularity from regular inputs.

\end{enumerate}

\subsection{Open questions} We conclude this section with open questions that arise in view of our
results:
\begin{enumerate}
\item \emph{Non-Gaussian barycenters.} Our results establish existence, a structural characterization, and sufficient conditions for uniqueness of adapted Wasserstein barycenters for Gaussian processes, relying crucially on the explicit reduction to finite-dimensional adapted Bures--Wasserstein problems. It is natural to ask whether analogous results hold for broader classes of processes. The multicausal reformulation of Theorem~\ref{thm:gaussian.multicausal.optimizer} does not require the underlying distribution to be Gaussian, and the existence of adapted barycenters in $\mathrm{FP}_2$ is guaranteed by \cite[Theorem~4.5]{acciaio2025multicausal} under general assumptions. However, uniqueness and a structural characterization beyond the Gaussian case remain open. Natural candidates for further investigation would be Markov processes, where the conditional structure is sufficiently tractable to exploit multicausal dynamic programming, yet rich enough to go beyond the Gaussian setting.
\item \emph{Continuous-time barycenters.} All results in this paper are formulated in discrete time. Extending the theory to continuous-time processes, such as diffusions driven by Brownian motion, is of significant interest for applications in mathematical finance and stochastic control. The continuous-time adapted Wasserstein distance has been studied in \cite{bartl2025wasserstein}, but the barycenter problem in that setting is open, to the best of our knowledge. Already in the Gaussian case, the question of whether the adapted barycenter of Ornstein--Uhlenbeck processes is itself a diffusion, and how the fixed-point equation \eqref{eq:FPE} extends to a continuous-time analogue involving operator-valued equations, appears nontrivial to us.
%

%
\item \emph{Entropic adapted barycenters.} The recent work \cite{acciaio2025entropic} studies entropic regularization of the adapted Wasserstein distance between Gaussian processes. A natural extension of our results is to define and study \emph{entropic adapted barycenters}, i.e.\ Fr\'echet means with respect to the entropic adapted Wasserstein distance. It would be interesting to investigate whether these regularized barycenters admit closed-form characterizations analogous to the fixed-point equation \eqref{eq:FPE}, whether uniqueness holds under weaker assumptions due to the strict convexity introduced by the entropic penalty, and how the entropic barycenter converges to the adapted Wasserstein barycenter as the regularization parameter vanishes.
\item \emph{Robust stress testing of pricing models.} In financial risk management, stress testing requires constructing plausible adverse scenarios that are consistent with a collection of reference pricing models. The adapted Wasserstein barycenter provides a natural candidate for a central scenario that respects the temporal flow of information inherent in financial models. More precisely, given a set of competing stochastic models for asset prices or risk factors, the adapted barycenter aggregates them into a single representative model while preserving the filtration structure, which is essential for hedging and dynamic trading strategies. It would be interesting to investigate how adapted barycenters can be used to define stress scenarios as perturbations of the central model within an adapted Wasserstein ball, extending the distributionally robust approach of \cite{BBBE2020A, bartl2023sensitivity} to the multi-model setting. The Gaussian framework developed in this paper offers a computationally tractable starting point for such applications, and the fixed-point iteration of Algorithm~\ref{alg:ABW} provides a concrete tool for computing these central scenarios in practice.

\end{enumerate}

\subsection{Structure of the paper}
The remainder of this paper is organized as follows. 
Section~\ref{sec:notation} introduces the filtered process, multicausal, and Gaussian notation used throughout the paper. Section~\ref{sec:unrestricted-gaussian-barycenters} studies the unrestricted adapted Wasserstein barycenter problem, including the Gaussian multicausal construction and the enlarged-noise representation of its minimizers. In Section~\ref{sec:FG-common-noise} we establish the common-noise criterion characterizing when an unrestricted barycenter admits an ordinary filtered Gaussian representative, and analyze the corresponding restricted barycenter problem. Section~\ref{sec:computation} develops computational methods for the restricted and unrestricted problems, including fixed-point iterations, semidefinite formulations, and optimality certificates. Section~\ref{sec:mgle-barycenters} treats barycenters under a martingale constraint and gives an explicit solution in terms of martingale projections and classical Bures--Wasserstein barycenters of the Gaussian increments. 
In Section~\ref{sec:OU} we apply the theory to time-varying $AR(1)$ processes, showing in particular that under nonnegative autoregressive coefficients the adapted barycenter reduces to the Knothe--Rosenblatt barycenter, which is generically not itself $AR(1)$. Finally, in Section~\ref{sec:numerics} we illustrate these findings numerically, comparing the adapted and classical Bures--Wasserstein barycenters through their second-order structure.

\section{Notation and preliminary results}
\label{sec:notation}
\label{sec:prelim}
In this section we collect the notation and background material used throughout the paper. We write $[T]=\{1,\dots,T\}$ and identify $\R^{Td}$ with $(\R^d)^T$, so that $x\in\R^{Td}$ is written as $x=(x_1,\dots,x_T)$ with $x_t\in\R^d$. Vectors are regarded as columns. We denote by $\mathscr{S}_+(n)$ (resp. $\mathscr{S}_{++}(n)$) the set of symmetric positive semidefinite (resp. positive definite) $n\times n$ matrices, and by $\mathscr{O}(n) \subset \bR^{n \times n}$ the orthogonal group. For a matrix $A$, we write $\|A\|_{2\to 2}$ for its operator norm, $\|A\|_\mathrm{F}$ for its Frobenius norm, $\|A\|_*$ for its nuclear norm (the sum of its singular values), and $\mathrm{Im}(A)$ for its column space. The barycenter weights are denoted by $\lambda_k>0$, with $\sum_{k=1}^K\lambda_k=1$. Boldface symbols such as $\boldsymbol{\epsilon}$, $\mathbf{L}$, and $\mathbf{P}$ are reserved for stacked or ensemble objects built from the $K$ marginal processes, while non-bold symbols such as $L$ and $\epsilon$ refer to single-marginal quantities. We use a wide bar for product-space or multicausal objects and an ordinary overline for barycentric means and factors. The law of a random element $X$ is denoted by $\mathcal{L}(X)$, the $n$-dimensional Gaussian distribution with mean $a$ and covariance $A$ by $\mathcal{N}_n(a,A)$ (the subscript is omitted when the dimension is clear), and the classical $2$-Wasserstein distance between probability measures with finite second moment by $\mathcal{W}_2$.

\subsection{Adapted Wasserstein distance}
\label{sec:AOT}
We recall the definitions of filtered process and adapted Wasserstein distance following the framework of \cite{WasSpacStoPro, acciaio2025multicausal}. A {\it filtered process} is a five-tuple
\[
\bX=\left(\Omega^{\bX},\cF^{\bX},\bP^{\bX},\bF^{\bX}=(\cF_t^{\bX})_{t=1}^T,X=(X_t)_{t=1}^T\right),
\]
where \((\Omega^{\bX},\cF^{\bX},\bP^{\bX})\) is a probability space, \(\bF^{\bX}\) is a filtration, and \(X\) is an \(\bR^d\)-valued process adapted to \(\bF^{\bX}\). The family of filtered processes $\bX$ with finite second moment (that is, $\bE_{\bP^{\bX}}[\|X\|_2^2] < \infty$) is denoted by \(\cFP_2\).

\medskip
Given filtered processes \(\bX, \bY \in \cFP_2\), we let $\Cpl(\bX, \bY)$ be the set of couplings on the product space $\Omega^{\bX} \times \Omega^{\bY}$ with marginals $\bP^{\bX}$ and $\bP^{\bY}$. A coupling $\pi \in \mathrm{Cpl}(\mathbb{X}, \mathbb{Y})$ is called \emph{bicausal} if, for each $t = 1, \ldots, T$, the $\sigma$-algebra $\mathcal{F}^{\mathbb{X}}_T$ is conditionally independent of $\mathcal{F}^{\mathbb{Y}}_t$ given $\mathcal{F}^{\mathbb{X}}_t$ under $\pi$, and symmetrically $\mathcal{F}^{\mathbb{Y}}_T$ is conditionally independent of $\mathcal{F}^{\mathbb{X}}_t$ given $\mathcal{F}^{\mathbb{Y}}_t$ under $\pi$; see \cite[Definition~2.1]{WasSpacStoPro}. This is equivalent to the multicausal condition, recalled below, for two marginals. We write $\mathrm{Cpl}_{\mathrm{bc}}(\mathbb{X}, \mathbb{Y})$ for the set of bicausal couplings. The \emph{adapted $2$-Wasserstein distance} between $\bX, \bY \in \cFP_2$ is defined as
\begin{equation} \label{eq:AW2}
\AW_2^2(\bX,\bY):=\inf_{\pi\in\Cpl_{\mathrm{bc}}(\bX,\bY)}\E_\pi\left[\|X-Y\|_2^2\right].
\end{equation}
We let $\FP_2$ be the quotient space $\cFP_2 / \sim$, where two filtered processes $\bX, \bY$ are identified if $\AW_2(\bX, \bY) = 0$. We often identify a filtered process $\bX$ with its equivalence class $[\bX]$. We call $(\FP_2,\AW_2)$ the \emph{Wasserstein space of (discrete-time) stochastic processes}; it is a Polish space and is isometric to a Wasserstein space over nested conditional laws.

\medskip
As shown in \cite{GunWon, acciaio2025entropic, ABGHP26, gunasingam2026adapted}, in the Gaussian setting the adapted Wasserstein distance admits a closed-form representation. Let $\mathscr{L}(T, d) \subset \bR^{Td \times Td}$ be the set of block lower-triangular matrices of the form
\[
L = \begin{pmatrix} L_{1,1} & 0 & \cdots & 0 \\ L_{2,1} & L_{2,2} & \cdots & 0 \\ \vdots & \vdots & \ddots & \vdots \\ L_{T,1} & L_{T,2} & \cdots & L_{T,T} \end{pmatrix}, \quad L_{t,s} \in \bR^{d \times d}.
\]
A \emph{filtered Gaussian process} is a filtered process $\bG^{a, L}$, parameterized by the mean $a \in \bR^{Td}$ and the Cholesky factor $L \in \mathscr{L}(T, d)$, given by
\begin{equation} \label{eqn:filtered.Gaussian}
\bG^{a, L} = ( \bR^{Td}, \cB(\bR^{Td}), \cN_{Td}(0, I), \bF^{\epsilon}, X = a + L\epsilon),
\end{equation}
where $\epsilon=(\epsilon_t)_{t=1}^T$ is the canonical standard Gaussian process and $\bF^{\epsilon}$ is the filtration generated by $\epsilon$. Explicitly,
\[
X_t=a_t+\sum_{s=1}^tL_{t,s}\epsilon_s.
\]
The filtration generated by $X$ may be strictly smaller than $\bF^{\epsilon}$ unless each $L_{t,t}$ is invertible. In the centered case $a = 0$, we write $\bG^{L} := \bG^{0, L}$. We let $\cFG(T,d)\subset\cFP_2$ be the set of filtered Gaussian processes. If $\bX^i=\bG^{a^i,L^i}\in\cFG(T,d)$, $i=1,2$, then
\begin{equation} \label{eq:AW2.Gaussian}
\AW_2^2(\bX^1, \bX^2) = \|a^1 - a^2\|_2^2 + \mathrm{dist}_{\mathrm{ABW}}^2(L^1, L^2),
\end{equation}
where $\mathrm{dist}_{\mathrm{ABW}}$ is the \emph{adapted Bures--Wasserstein (pseudo-)distance} on $\mathscr{L}(T, d)$ given by
\begin{equation}
\mathrm{dist}_{\mathrm{ABW}}^2(L^1, L^2) := \|L^1\|_{\mathrm{F}}^2 + \|L^2\|_{\mathrm{F}}^2 - 2 \sum_{t = 1}^T \| ((L^1)^{\intercal} (L^2))_{t,t}\|_*,\label{eq:ABW}
\end{equation}
or its variational form,
\begin{equation}
\mathrm{dist}_\mathrm{ABW}(L^1,L^2)= \min_{Q \in \mathscr{O}(T, d)} \| L^1 - L^2 Q \|_{\mathrm{F}}. \label{eq:Procrustes}
\end{equation}
Here, 
$\mathscr{O}(T, d) := \{ \diag(Q_1, \ldots, Q_T): Q_t \in \mathscr{O}(d)\}$ is the set of block diagonal orthogonal matrices. Both expressions will be utilized throughout this paper, and \cite{gunasingam2026adapted} shows that $\mathrm{dist}_{\mathrm{ABW}}(L^1, L^2) = 0$ if and only if $L^1 = L^2Q$ for some $Q \in \mathscr{O}(T, d)$. Under this equivalence relation $\mathrm{dist}_{\mathrm{ABW}}$ becomes a genuine distance. We denote by $\mathscr Q(L^1,L^2):=\argmin_{Q\in\mathscr O(T,d)}\|L^1-L^2Q\|_F^2$ the nonempty set of optimizers of the constrained Procrustes problem \eqref{eq:Procrustes}. Finally, from \cite{ABGHP26} we say that $L\in\mathscr{L}(T,d)$ is \emph{regular}, written $L\in\mathscr{L}^\mathrm{reg}(T,d)$, if for every $t\in[T]$ the block-column 
\[
{L}_{\cdot,t} := \begin{pmatrix} L_{1,t}  \\ \vdots \\ L_{T,t} \end{pmatrix} \in \mathbb{R}^{Td \times d},
\]
\medskip
has full column rank $d$, equivalently $(L^\intercal L)_{t,t}=L_{\cdot,t}^\intercal L_{\cdot,t}\in \mathscr{S}_{++}(d)$. Probabilistically, $L_{\cdot, t}$ records the contribution of the innovation $\eps_t$ to the path $(X_s)_{s=t}^T$. (Note that $L_{t,s}=0$ for $s>t$).
\medskip

We also observe that $\mathrm{dist}_{\mathrm{ABW}}$ can be expressed in terms of the usual \emph{Bures--Wasserstein distance} $\mathrm{dist}_{\mathrm{BW}}$, which is defined for covariance matrices $A, B \in \R^{n \times n}$ by
\begin{align*}
\mathrm{dist}_{\mathrm{BW}}^2(A, B) &:= \tr A + \tr B - 2 \tr \left( (A^{1/2}BA^{1/2})^{1/2}  \right) = \cW_2^2 ( \cN(0, A), \cN(0, B)).
\end{align*}
Indeed, by \cite[Proposition 2.9]{gunasingam2026adapted}, if $A = LL^{\intercal}$ and $B = MM^{\intercal}$, then
\begin{equation} \label{eqn:d.BW}
\mathrm{dist}_{\mathrm{BW}}^2(A, B) = \|L\|_{\mathrm{F}}^2 + \|M\|_{\mathrm{F}}^2 - 2 \|L^{\intercal}M\|_*.
\end{equation}
Define the \emph{column covariance}
\[
A_t^L:=L_{\cdot,t}L_{\cdot,t}^\intercal\in \R^{Td\times Td}.
\]
Comparing \eqref{eqn:d.BW} and \eqref{eq:ABW} for each $t$ leads to the following identity.

\begin{proposition} \label{prop:decomp}
For $L, M \in \mathscr{L}(T, d)$, we have 
\begin{equation}\label{eq:decomp}
\mathrm{dist}_{\mathrm{ABW}}^2(L, M) = \sum_{t=1}^T \mathrm{dist}_{\mathrm{BW}}^2\bigl(A_t^L,\, A_t^M\bigr).
\end{equation}

\begin{proof}
Fix $L,M\in\mathscr{L}(T,d)$. Applying \eqref{eqn:d.BW} to the column covariances $A_t^L=L_{\cdot,t}L_{\cdot,t}^{\intercal}$ and $A_t^M=M_{\cdot,t}M_{\cdot,t}^{\intercal}$ yields
\[
\mathrm{dist}_{\mathrm{BW}}^2\bigl(A_t^L, A_t^M\bigr) = \|L_{\cdot,t}\|_{\mathrm{F}}^2 + \|M_{\cdot,t}\|_{\mathrm{F}}^2 - 2 \bigl\| L_{\cdot,t}^{\intercal} M_{\cdot,t} \bigr\|_*.
\]
Summing over $t \in [T]$ gives
\begin{equation} \label{eq:decomp.sum}
\sum_{t=1}^T \mathrm{dist}_{\mathrm{BW}}^2\bigl(A_t^L, A_t^M\bigr) = \sum_{t=1}^T \|L_{\cdot,t}\|_{\mathrm{F}}^2 + \sum_{t=1}^T \|M_{\cdot,t}\|_{\mathrm{F}}^2 - 2 \sum_{t=1}^T \bigl\| L_{\cdot,t}^{\intercal} M_{\cdot,t} \bigr\|_*.
\end{equation}
It remains to identify the three sums on the right-hand side of \eqref{eq:decomp.sum} with the corresponding terms in \eqref{eq:ABW}. Since the block-columns $L_{\cdot,1}, \ldots, L_{\cdot,T}$ collect all the blocks of $L$, we have
\[
\sum_{t=1}^T \|L_{\cdot,t}\|_{\mathrm{F}}^2 = \sum_{t=1}^T \sum_{s=1}^T \|L_{t,s}\|_{\mathrm{F}}^2 = \|L\|_{\mathrm{F}}^2,
\]
and likewise
\[
\sum_{t=1}^T \|M_{\cdot,t}\|_{\mathrm{F}}^2 = \|M\|_{\mathrm{F}}^2.
\]
For the last sum, the $d\times d$ diagonal block of $L^{\intercal}M$ indexed by $t$ is
\[
(L^{\intercal} M)_{t,t}
=L_{\cdot,t}^{\intercal} M_{\cdot,t}
=\sum_{s=1}^T L_{s,t}^{\intercal} M_{s,t}.
\]
Hence
\[
\sum_{t=1}^T \bigl\| L_{\cdot,t}^{\intercal} M_{\cdot,t} \bigr\|_* = \sum_{t=1}^T \bigl\| (L^{\intercal} M)_{t,t} \bigr\|_*.
\]
Substituting these three identities into \eqref{eq:decomp.sum} gives
\[
\sum_{t=1}^T \mathrm{dist}_{\mathrm{BW}}^2\bigl(A_t^L, A_t^M\bigr) = \|L\|_{\mathrm{F}}^2 + \|M\|_{\mathrm{F}}^2 - 2 \sum_{t=1}^T \bigl\| (L^{\intercal} M)_{t,t} \bigr\|_* = \mathrm{dist}_{\mathrm{ABW}}^2(L, M),
\]
where the last equality is the definition \eqref{eq:ABW} of $\mathrm{dist}_{\mathrm{ABW}}$. This proves \eqref{eq:decomp}.
\end{proof}
\end{proposition}

\subsection{Multicausal transport and barycenters}\label{sec:multicausal}

A key ingredient is the multicausal reformulation of the barycenter problem. We briefly recall the necessary notions; see \cite{acciaio2025multicausal} for a comprehensive treatment.

\medskip
Given $K$ filtered processes $\mathbb{X}^1, \ldots, \mathbb{X}^K \in \mathrm{FP}_2$, we write $\mathrm{Cpl}(\mathbb{X}^1, \ldots, \mathbb{X}^K)$ for the set of couplings on the product space $(\widebar{\Omega},  \widebar{\cF}) := (\prod_{k=1}^K \Omega^k, \bigotimes_{k=1}^K \mathcal{F}^k)$ with marginals $\mathbb{P}^k$, $k = 1, \ldots, K$, and we denote by $\widebar{\mathcal{F}}_t := \bigotimes_{k=1}^K \mathcal{F}^k_t$ the product filtration $\widebar{\bF}$ at time $t$. We assume throughout that $\Omega^k = \Omega^k_{1:T} := \Omega^k_1 \times \cdots \times \Omega^k_T$, where each $\Omega_s^k$ is Polish and is equipped with its Borel $\sigma$-field. Thus sample points $\omega^k \in \Omega^k$ are identified with $\omega^k_{1:T} = (\omega^k_1, \ldots, \omega^k_T)$, regular conditional distributions exist, and $\mathcal{F}_t^k=\bigotimes_{s=1}^t \mathcal{B}(\Omega_s^k) \otimes \bigotimes_{s=t+1}^T \{\emptyset, \Omega_s^k\}$. It is helpful to think of $\omega_{1:t}^k$ as the driving noise of $X^k$ up to time $t$. A coupling $\pi \in \mathrm{Cpl}(\mathbb{X}^1, \ldots, \mathbb{X}^K)$ is called \emph{multicausal} if, for each $k = 1, \ldots, K$ and each $t = 1, \ldots, T$, the $\sigma$-algebra $\mathcal{F}^k_T$ is conditionally independent of $\widebar{\mathcal{F}}_t$ given $\mathcal{F}^k_t$ under $\pi$; see \cite[Definition~2.4]{acciaio2025multicausal}. We denote the set of multicausal couplings by $\mathrm{Cpl}_{\mathrm{mc}}(\mathbb{X}^1, \ldots, \mathbb{X}^K)$. For $K = 2$, multicausality reduces to bicausality. We write $\mathbb{P}^k_{t+1, \omega^k_{1:t}}$ for the regular conditional distribution of $\omega^k_{t+1}$ given $\omega^k_{1:t} := (\omega^k_1, \ldots, \omega^k_t)$ under $\mathbb{P}^k$, and $\widebar{\omega}_{1:t} := (\omega^1_{1:t}, \ldots, \omega^K_{1:t})$
. 

\medskip
By \cite[Example~4.23]{acciaio2025multicausal}, the barycenter problem \eqref{eq:AWbar} with quadratic cost is equivalent to the \emph{multicausal optimal transport problem}
\begin{equation}\label{eq:mc}
\inf_{\pi \in \mathrm{Cpl}_{\mathrm{mc}}(\mathbb{X}^1, \ldots, \mathbb{X}^K)} \mathbb{E}_\pi\left[\sum_{k=1}^K \lambda_k \left\|X^k - \phi^0(X^1, \ldots, X^K)\right\|_2^2\right],
\end{equation}
where $\phi^0(x^1, \ldots, x^K) := \sum_{k=1}^K \lambda_k x^k$ is the optimal aggregation map. It is standard to show that \eqref{eq:mc} has an optimal solution. By \cite[Theorem~4.19]{acciaio2025multicausal}, any optimizer $\pi^*$ yields a barycenter 
\begin{equation} \label{eqn:mc.to.barycenter}
\widebar{\mathbb{X}} = (\widebar{\Omega}, \widebar{\cF}, \pi^*, \widebar{\bF}, \widebar{X} := \phi^0(X^1, \ldots, X^K)).
\end{equation}
Conversely, gluing optimal bicausal couplings between the marginals $\bX^k$ and any fixed $\bY\in\FP_2$ along $\bY$ produces a multicausal coupling of $\bX^1,\dots,\bX^K$; see \cite{acciaio2025multicausal}. Moreover, by \cite[Theorem~3.2]{acciaio2025multicausal}, the multicausal problem satisfies a dynamic programming principle: define the value functions backward in time by
\begin{equation}\label{eq:DPP}
V(t, \widebar{\omega}_{1:t}) = \inf\left\{ \int V(t+1, \widebar{\omega}_{1:t}, \widebar{\omega}_{t+1})\, \pi(d\widebar{\omega}_{t+1}) \;\middle|\; \pi \in \mathrm{Cpl}(\mathbb{P}^1_{t+1, \omega^1_{1:t}}, \ldots, \mathbb{P}^K_{t+1, \omega^K_{1:t}}) \right\},
\end{equation}
with terminal condition
\[
V(T,\widebar\omega)=\sum_{k=1}^K\lambda_k\bigl\|X^k(\omega^k)-\phi^0(X^1(\omega^1),\ldots,X^K(\omega^K))\bigr\|_2^2.
\]
Solving \eqref{eq:DPP} yields an optimal kernel at each time step, and the optimal multicausal coupling is then constructed by composing the resulting kernels.

\begin{remark}
It is straightforward to show that additive constants in $X^i$ can be factored out under the quadratic cost. In particular, if the mean of $X^k$ is $a^k$, then for any barycenter $\widebar{X}$, the mean of $\widebar{X}$ is $\widebar{a} := \sum_{k = 1}^K \lambda_k a^k$. For convenience, we often assume without loss of generality that  all marginal processes have been centered.
\end{remark}

\subsection{Gaussian multicausal couplings}
We continue to use the notations in Section \ref{sec:multicausal}. Let $K$ filtered Gaussian processes $\bX^k = \bG^{a^k, L^k} \in \cFG(T, d)$, $k \in [K]$, be given. It will be proved in Section \ref{sec:unrestricted-gaussian-barycenters} that for any weights $\lambda_1, \ldots, \lambda_K$, the multi-marginal problem \eqref{eq:mc} admits an optimal multicausal coupling $\pi \in \Cpl_{\mathrm{mc}}(\bX^1, \ldots, \bX^K)$ which is jointly Gaussian. This leads, via \eqref{eqn:mc.to.barycenter}, to an $\AW_2$-barycenter $\widebar{\bX}$. To prepare for this analysis, we characterize the Gaussian elements of $\Cpl_{\mathrm{mc}}(\bX^1, \ldots, \bX^K)$, and introduce a space $\cFG_K(T, d)$ of \emph{ensemble filtered Gaussian processes} such that $\widebar{X}$ is $\AW_2$-equivalent to a member of $\cFG_K(T, d)$. Along the way, we establish notations that will be used throughout the paper.

\medskip
Since all filtered Gaussian processes share the same underlying sample space $\bR^{Td}$, we have $\widebar{\Omega} = (\bR^{Td})^K$. Let $\epsilon^k = (\epsilon_t^k)_{t = 1}^T$, $k \in [K]$, be the $k$-th noise process, so that $\epsilon_t^k(\widebar{\omega}) = \omega_t^k$ and $X^k = a^k + L^k \epsilon^k$. Let $\pi$ be a coupling of $\bX^1, \ldots, \bX^K$. By definition, it is a probability measure on $\widebar{\Omega}$ such that each of the $k$-th marginals is $\cN_{Td}(0, I)$, and represents the joint distribution of $(\epsilon^1, \ldots, \epsilon^K)$. We let
\[
\widebar{\boldsymbol{\epsilon}}_t := \begin{pmatrix} \epsilon_t^1 \\ \vdots \\ \epsilon_t^K \end{pmatrix} \in \bR^{Kd} \quad \text{and} \quad
\widebar{\boldsymbol{\epsilon}} := \begin{pmatrix} \widebar{\boldsymbol{\epsilon}}_1 \\ \vdots \\ \widebar{\boldsymbol{\epsilon}}_T \end{pmatrix}
\in\bR^{KTd}
\]
be the noises stacked first marginally, then in time. By an abuse of notation, we may regard $\pi$ as the law of $\widebar{\boldsymbol{\epsilon}}$. If $\pi$ is jointly Gaussian, it is described by its covariance matrix $\mathbf{P} := \bE_{\pi}[\widebar{\boldsymbol{\epsilon}}\widebar{\boldsymbol{\epsilon}}^{\intercal}]$, so that $\widebar{\boldsymbol{\epsilon}} \sim \pi = \cN_{KTd}(\mathbf{0}, \mathbf{P})$. Decompose $\mathbf{P}$ into $T \times T$ blocks $
\mathbf{P}_{s,t} \in \bR^{Kd \times Kd}$, each of which has $K \times K$ sub-blocks $\mathbf{P}_{s,t}^{k, \ell} \in \bR^{d \times d}$, and note that the marginal condition $\epsilon^k \sim \cN_{Td}(0, I)$ is equivalent to
\begin{equation} \label{eqn:P.block.condition}
\mathbf{P}_{s,t}^{k,k} = \delta_{s,t} I_d,
\end{equation}
where $\delta_{s,t}$ is the Kronecker delta. The following result characterizes multicausality of $\pi$ via $\mathbf{P}$; it is a direct generalization of \cite[Theorem 2.2]{acciaio2025entropic}.

\begin{lemma}[Characterization of Gaussian multicausal couplings]
\label{thm:gaussian.mc.characterization}
Let $\bX^1, \ldots, \bX^K \in \cFG(T, d)$ be filtered Gaussian processes. 
Let \(\pi = \cL(\widebar{\boldsymbol{\epsilon}}) = \cN_{KTd}(\mathbf{0}, \mathbf{P})\) be a jointly Gaussian coupling of $\bX^1,\ldots,\bX^K$, where $\mathbf{P}$ satisfies \eqref{eqn:P.block.condition}. The following are equivalent:
\begin{enumerate}
\item[(i)] \(\pi\) is multicausal.
\item[(ii)] $\mathbf{P} = (\mathbf{P}_{s,t})_{s,t = 1}^T$ is block diagonal. Equivalently, the stacked noises $\widebar{\boldsymbol{\epsilon}}_1, \ldots, \widebar{\boldsymbol{\epsilon}}_T$ are jointly independent under $\pi$.
\end{enumerate}
\end{lemma}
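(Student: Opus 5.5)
Both directions reduce to Gaussian conditional-independence statements: for jointly Gaussian vectors, conditional independence is equivalent to vanishing of the corresponding conditional cross-covariance. Here, conditional independence of $\mathcal{F}^k_T$ and $\widebar{\mathcal{F}}_t$ given $\mathcal{F}^k_t$ means: $\epsilon^k_{t+1:T}$ is independent of $\widebar{\boldsymbol{\epsilon}}_{1:t}$ given $\epsilon^k_{1:t}$. The natural filtration of $\epsilon^k$ is the one in play, since the sample space of $\bG^{a^k,L^k}$ is $\bR^{Td}$ with $\bF^{\epsilon}$. By \eqref{eqn:P.block.condition}, $\epsilon^k_{t+1:T}$ is independent of $\epsilon^k_{1:t}$. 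Hence the conditional cross-covariance of $\epsilon^k_{t+1:T}$ and $\widebar{\boldsymbol{\epsilon}}_{1:t}$ given $\epsilon^k_{1:t}$ equals
\[
\Cov(\epsilon^k_{t+1:T},\widebar{\boldsymbol{\epsilon}}_{1:t}) - \Cov(\epsilon^k_{t+1:T},\epsilon^k_{1:t})\,\Cov(\epsilon^k_{1:t})^{-1}\Cov(\epsilon^k_{1:t},\widebar{\boldsymbol{\epsilon}}_{1:t}) = \Cov(\epsilon^k_{t+1:T},\widebar{\boldsymbol{\epsilon}}_{1:t}),
\]
because the middle factor vanishes. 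So multicausality is equivalent to $\mathbf{P}^{k,\ell}_{u,s}=0$ for all $k,\ell$, all $s\le t<u$, and all $t$. That is, every off-diagonal time block $\mathbf{P}_{u,s}$ with $u\neq s$ vanishes, and by symmetry of $\mathbf{P}$ this is exactly block diagonality.

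\textbf{(ii)$\Rightarrow$(i).} If $\mathbf{P}$ is block diagonal, then $\widebar{\boldsymbol{\epsilon}}_1,\dots,\widebar{\boldsymbol{\epsilon}}_T$ are jointly Gaussian and pairwise uncorrelated across times, hence jointly independent. Then $\epsilon^k_{t+1:T}$ is a function of $\widebar{\boldsymbol{\epsilon}}_{t+1:T}$, which is independent of $\widebar{\boldsymbol{\epsilon}}_{1:t}$. This gives the conditional independence given $\epsilon^k_{1:t}$ directly, since $\epsilon^k_{1:t}$ is itself measurable with respect to $\widebar{\boldsymbol{\epsilon}}_{1:t}$.

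\textbf{(i)$\Rightarrow$(ii).} Multicausality implies conditional uncorrelatedness. Apply it with $t=s$ for each pair $s<u$: the conditional cross-covariance of $\epsilon^k_u$ and $\epsilon^\ell_s$ given $\epsilon^k_{1:s}$ must vanish. By the computation above this equals $\mathbf{P}^{k,\ell}_{u,s}$, because $\epsilon^k_u$ is independent of $\epsilon^k_{1:s}$. Ranging over all $k,\ell$ gives $\mathbf{P}_{u,s}=0$ for $u>s$, and transposing handles $u<s$.

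The only point needing care is the identification of the filtrations $\mathcal{F}^k_t$ with $\sigma(\epsilon^k_{1:t})$ on the product space, so that conditioning on $\mathcal{F}^k_t$ is conditioning on $\epsilon^k_{1:t}$. The Gaussian fact that conditional covariances are deterministic is standard, so I expect no real obstacle; the crux is simply that the marginal condition \eqref{eqn:P.block.condition} kills the projection term.
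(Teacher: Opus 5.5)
Your proposal is correct and follows essentially the same route as the paper. The reverse direction is identical: block diagonality gives independence across times, hence the conditional independence. Your forward direction via the vanishing Gaussian conditional cross-covariance is the same observation the paper makes through $\E_\pi[\epsilon^k_u\mid\widebar{\mathcal F}_s]=\E_\pi[\epsilon^k_u\mid\mathcal F^k_s]=0$ together with the $\widebar{\mathcal F}_s$-measurability of $\epsilon^\ell_s$. The paper's conditional-expectation phrasing does not use Gaussianity, which is why the same computation is reused for arbitrary multicausal couplings in Lemma~\ref{lem:trace.form.multicausal.cost}.
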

\begin{proof}
Suppose first that \(\pi\) is multicausal. For \(t>s\), multicausality of the
\(k\)-th coordinate at time \(s\), together with the white-noise marginal law, gives
\[
\E_\pi[\epsilon_t^k\mid\widebar{\mathcal F}_s]
=\E_\pi[\epsilon_t^k\mid\mathcal F_s^k]=0.
\]
Since \(\epsilon_s^\ell\) is \(\widebar{\mathcal F}_s\)-measurable, it follows that
\(\E_\pi[\epsilon_t^k(\epsilon_s^\ell)^\intercal]=0\) for every \(k,\ell\).
The case \(s>t\) follows by transposition. Thus all cross-time blocks of
\(\mathbf P\) vanish.

Conversely, if \(\mathbf P\) is block diagonal, joint Gaussianity implies that the
vectors \(\widebar{\boldsymbol\epsilon}_1,\ldots,
\widebar{\boldsymbol\epsilon}_T\) are independent. Hence the future noises of each
coordinate after time \(s\) are independent of \(\widebar{\mathcal F}_s\). Conditional
on \(\mathcal F_s^k\), this is exactly the multicausal conditional-independence
property.
\end{proof}

By Lemma~\ref{thm:gaussian.mc.characterization}, a Gaussian multicausal coupling of $\bX^1,\ldots,\bX^K$ has the form $\pi=\cL(\widebar{\boldsymbol{\epsilon}})=\cN_{KTd}(\mathbf{0},\mathbf{P})$, where
\[
\mathbf{P} = \diag( \mathbf{P}_1, \ldots, \mathbf{P}_T)
\]
is block diagonal and each one-step block $\mathbf{P}_t \in \bR^{Kd \times Kd}$ is an element of the set of \emph{block correlation matrices} 
\begin{equation}
\label{eq:Pkdef}
\mathscr{P}_+(K,d):=\{\mathbf{C} = (\mathbf{C}^{k,\ell})_{k,\ell = 1}^K \in\mathscr S_+(Kd):\mathbf{C}^{k,k}=I_d\textup{ for }k\in[K]\},
\end{equation}
where $\mathbf{P}_t^{k,\ell}\in\R^{d\times d}$ denotes the $(k,\ell)$-th block; the block structure ($K \times K$ blocks of size $d \times d$) will always be clear from the context. For $d=1$, $\mathscr{P}_+(K):=\mathscr{P}_+(K,1)$ is simply the set of $K\times K$ correlation matrices. Conversely, any choice of one-step blocks in $\mathscr{P}_+(K,d)$ defines a Gaussian multicausal coupling. We accordingly write
\[
\mathscr{P}_+(K,T,d):=\{\mathbf{P}=\diag(\mathbf{P}_1,\dots,\mathbf{P}_T): \mathbf{P}_1,\dots,\mathbf{P}_T\in\mathscr{P}_+(K,d)\}
\]
for the collection of covariance matrices, corresponding to Gaussian multicausal couplings.

As noted in \cite{acciaio2025multicausal}, the filtration of the $\AW_2$-barycenter is typically larger than the marginal filtrations. In our Gaussian setting, this corresponds to the fact that the $d$-dimensional process $\widebar{X}$ is driven by the stacked noises $\widebar{\boldsymbol{\epsilon}}$ which has $Kd$ spatial dimensions. This leads to the following extension of the concept of filtered Gaussian process.

\begin{definition}[Ensemble filtered Gaussian process] \label{def:extended.filtered.gaussian}
Let
\[
\mathscr L_K(T,d):=\{\mathbf L = (\mathbf{L}_{t,s})_{s,t = 1}^T \in (\bR^{d\times Kd})^{T \times T}: \mathbf L_{t,s}=0\textup{ whenever }s>t\},
\]
where each $\bL_{t,s}$ has $K$ column blocks $\bL_{t,s}^{k} \in \bR^{d \times d}$. For $a \in \bR^{Td}$ and $\mathbf{L} \in \mathscr{L}_K(T, d)$, define the filtered process
\[
\bG_K^{a,\mathbf L}:=\left(\bR^{TKd},\cB(\bR^{TKd}),\cN_{TKd}(0,I),\bF^{\boldsymbol{\epsilon}},X=a+\mathbf {L}\boldsymbol{\epsilon}\right),
\]
where $\boldsymbol{\epsilon}=(\boldsymbol{\epsilon}_t)_{t=1}^T$, $\boldsymbol{\epsilon}_t=(\epsilon_t^1,\ldots,\epsilon_t^K)$, is the canonical process and $\bF^{\boldsymbol\epsilon}$ is the filtration generated by $\boldsymbol\epsilon$. Explicitly, for each $t$ we have
\begin{equation} \label{eqn:ensemble.filtered.Gaussian}
X_t = a_t + \sum_{s = 1}^t \mathbf{L}_{t,s} \boldsymbol{\epsilon}_s = a_t + \sum_{s = 1}^t \sum_{k = 1}^K \bL_{t,s}^{k} \epsilon_s^k. 
\end{equation}
We call $\bG_K^{a,\mathbf L}$ an ensemble filtered Gaussian process and denote the corresponding space by $\cFG_K(T,d)$, and its $\AW_2$-completion by $\mathrm{FG}_K(T,d)$. Letting $K=1$ recovers the standard filtered Gaussian process. 
\end{definition}

In the context of Lemma \ref{thm:gaussian.mc.characterization}, let $\pi = \cL(\widebar{\boldsymbol{\epsilon}}) = \cN_{KTd}(\mathbf{0}, \mathbf{P})$ be a Gaussian multicausal coupling, and consider the filtered process $\widebar{\bX}$ given by \eqref{eqn:mc.to.barycenter}. Note that
\[
\widebar{X}_t = \widebar{a}_t + \sum_{k = 1}^K \lambda_k (L^k \epsilon^k)_t = \widebar{a}_t + \sum_{s = 1}^t \sum_{k = 1}^K \lambda_k L_{t,s}^k \epsilon_s^k
\]
almost has the form \eqref{eqn:ensemble.filtered.Gaussian}, except that the stacked noise $\widebar{\boldsymbol{\epsilon}}$ may not be standard normal due to the covariance $\mathbf{P}$. This apparent inconsistency can be resolved easily.

\begin{lemma} \label{lem:ensemble}
Let $\pi = \cL(\widebar{\boldsymbol{\epsilon}}) = \cN_{KTd}(\mathbf{0}, \mathbf{P})$ be a Gaussian multicausal coupling of the inputs $\bX^1, \ldots, \bX^K \in \cFG$, and define $\widebar{\bX}$ by \eqref{eqn:mc.to.barycenter}. Then there exists $\bX \in \cFG_K(T, d)$ such that
\[
\AW_2(\widebar{\bX}, \bX) = 0.
\]
\end{lemma}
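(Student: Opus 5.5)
Under $\pi$ the one-step blocks $\widebar{\boldsymbol{\epsilon}}_t\sim\cN_{Kd}(0,\mathbf P_t)$ are independent across $t$ (Lemma~\ref{thm:gaussian.mc.characterization}), but each block is not standard normal. The plan is to whiten each block with a Gaussian transformation that keeps the filtration intact, and then exhibit an explicit coupling of cost zero between $\widebar{\bX}$ and an ensemble process.

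For each $t$, choose a symmetric square root $\mathbf P_t^{1/2}\in\mathscr S_+(Kd)$ and set
\[
\mathbf L_{t,s}:=\bigl(\lambda_1L^1_{t,s},\ldots,\lambda_KL^K_{t,s}\bigr)\mathbf P_s^{1/2}\in\R^{d\times Kd}.
\]
Let $\bX:=\bG_K^{\widebar a,\mathbf L}$ with canonical noise $\boldsymbol\eta\sim\cN_{TKd}(0,I)$. Then $X_t=\widebar a_t+\sum_{s\le t}\mathbf L_{t,s}\boldsymbol\eta_s$. Since $\mathbf P_s^{1/2}\boldsymbol\eta_s\sim\cN(0,\mathbf P_s)$ independently in $s$, the process $\widebar{\boldsymbol\epsilon}':=(\mathbf P_s^{1/2}\boldsymbol\eta_s)_s$ has law $\pi$. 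Consequently, $\widebar X$ computed from $\widebar{\boldsymbol\epsilon}'$ coincides pathwise with $X$.

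It remains to show $\AW_2(\widebar{\bX},\bX)=0$. The obstacle is that the filtrations differ: $\bF^{\boldsymbol\eta}$ may be strictly larger than $\bF^{\widebar{\boldsymbol\epsilon}'}$ when some $\mathbf P_s$ is singular. To handle this, I build a coupling on $\widebar\Omega\times\R^{TKd}$ step by step.

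At each time $s$, couple $\widebar{\boldsymbol\epsilon}_s$ with $\boldsymbol\eta_s$ so that $\widebar{\boldsymbol\epsilon}_s=\mathbf P_s^{1/2}\boldsymbol\eta_s$. Concretely, set
\[
\boldsymbol\eta_s=\mathbf P_s^{+1/2}\widebar{\boldsymbol\epsilon}_s+\Pi_s\zeta_s,
\]
where $\mathbf P_s^{+1/2}$ is the pseudo-inverse square root, $\Pi_s$ is the projection onto $\ker\mathbf P_s$, and $\zeta_s\sim\cN(0,I)$ is independent of everything else. One checks that $\boldsymbol\eta_s$ is standard Gaussian and that $\mathbf P_s^{1/2}\boldsymbol\eta_s=\widebar{\boldsymbol\epsilon}_s$ a.s. (the latter because $\widebar{\boldsymbol\epsilon}_s\in\Image\mathbf P_s$ a.s.). Taking the pairs $(\widebar{\boldsymbol\epsilon}_s,\boldsymbol\eta_s)$ independent across $s$ gives a coupling $\gamma$ under which $\widebar X=X$ a.s.

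Bicausality of $\gamma$ follows because the pairs are independent across time. Given $\mathcal F^{\boldsymbol\eta}_t$, the future $(\widebar{\boldsymbol\epsilon}_{s})_{s>t}$ is independent of the joint past; symmetrically, given $\widebar{\mathcal F}_t$, the future $(\boldsymbol\eta_s)_{s>t}$ is independent of the joint past. This is exactly the argument in the converse direction of Lemma~\ref{thm:gaussian.mc.characterization}. The cost of $\gamma$ is $\E_\gamma\|\widebar X-X\|^2=0$, hence $\AW_2(\widebar{\bX},\bX)=0$.

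The main subtlety is precisely this filtration issue in the singular case. It is resolved by the independent auxiliary noise $\zeta_s$ on $\ker\mathbf P_s$ together with the product-in-time structure. No invertibility of $\mathbf P_s$ is needed.
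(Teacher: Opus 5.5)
Your proposal is correct and follows essentially the same route as the paper. You use the same ensemble factor $\mathbf L=\widebar{\mathbf L}\,\diag(\mathbf P_1^{1/2},\ldots,\mathbf P_T^{1/2})$ and the same zero-cost coupling, namely the joint law of $(\mathbf P_s^{1/2}\boldsymbol\eta_s,\boldsymbol\eta_s)_{s}$; the only difference is that you realize this coupling from the $\widebar{\boldsymbol\epsilon}$ side by adding independent noise on $\ker\mathbf P_s$, while the paper starts from the standard noise and splits off its null-space component. Your bicausality argument, via independence of the pairs across time, is exactly what the paper's null-space decomposition encodes, and you state it somewhat more cleanly.
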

\begin{proof}
Recall that $\mathbf P=\diag(\mathbf P_1,\ldots,\mathbf P_T)$. For each $t$, let $\mathbf M_t:=\mathbf P_t^{1/2}$ and set $\mathbf M:=\diag(\mathbf M_1,\ldots,\mathbf M_T)$. Define $\widebar{\mathbf L}\in\mathscr L_K(T,d)$ by $\widebar{\mathbf L}_{t,s}^k:=\lambda_kL_{t,s}^k$, set $\mathbf L:=\widebar{\mathbf L}\mathbf M$, and let $\bX:=\bG_K^{\widebar a,\mathbf L}$.

Let $\boldsymbol\epsilon$ be a standard $Kd$-dimensional Gaussian white noise and couple the two processes by
\[
\overline{\boldsymbol\epsilon}_t=\mathbf M_t\boldsymbol\epsilon_t,
\qquad t\in[T].
\]
Then $\overline{\boldsymbol\epsilon}\sim\cN_{TKd}(0,\mathbf P)$ and the two paths agree:
\[
\widebar a+\widebar{\mathbf L}\,\overline{\boldsymbol\epsilon}
=\widebar a+\widebar{\mathbf L}\mathbf M\boldsymbol\epsilon
=\widebar a+\mathbf L\boldsymbol\epsilon.
\]
It remains to verify bicausality when some $\mathbf M_t$ is singular. Write
\[
\tilde{\boldsymbol{\epsilon}}_t:=(I_{Kd}-\mathbf M_t^\dagger\mathbf M_t)\boldsymbol\epsilon_t,
\]
where $\mathbf M_t^\dagger$ is the Moore--Penrose inverse. The decomposition
\[
\boldsymbol\epsilon_t
=\mathbf M_t^\dagger\overline{\boldsymbol\epsilon}_t+\tilde{\boldsymbol{\epsilon}}_t
\]
shows that the additional information in $\sigma(\boldsymbol\epsilon_1,\ldots,\boldsymbol\epsilon_t)$ beyond $\sigma(\overline{\boldsymbol\epsilon}_1,\ldots,\overline{\boldsymbol\epsilon}_t)$ is generated by $\tilde{\boldsymbol{\epsilon}}_1,\ldots,\tilde{\boldsymbol{\epsilon}}_t$. These null-space components are jointly Gaussian and independent of the entire sequence $\overline{\boldsymbol\epsilon}$, because they are orthogonal to $\mathbf M_s\boldsymbol\epsilon_s$ at the same time and the noises are independent across time. Thus both conditional-independence requirements hold. The coupling is bicausal and has zero cost, so $\AW_2(\widebar{\bX},\bX)=0$.
\end{proof}

\medskip
\begin{remark} \label{rmk:AW2.ensemble}
Using the same column-covariance argument as in \cite{ABGHP26, gunasingam2026adapted}, the formula \eqref{eq:AW2.Gaussian} extends to $\bX^1\in\cFG_{K_1}(T,d)$ and $\bX^2\in\cFG_{K_2}(T,d)$, with rectangular one-step contractions replacing square orthogonal alignments when $K_1\ne K_2$. Equivalently, the factor contribution is the sum over $t$ of the Bures--Wasserstein distances between the corresponding column covariances.
\end{remark}

\section{The adapted barycenter}
\label{sec:unrestricted-gaussian-barycenters}
This section addresses the unrestricted barycenter problem between filtered Gaussian marginals. We prove existence and show that a representative can always be selected as an ensemble filtered Gaussian process. In general uniqueness does not hold, and we provide a sufficient condition under which it does.

\medskip
Throughout the remaining sections, unless specified otherwise, we fix $K$ marginal centered Gaussian filtered processes \(\bX^k:=\bG^{L^k}\in\cFG(T,d)\)
where $L^1, \ldots, L^K \in \mathscr{L}(T, d)$, and weights \(\lambda_1,\dots,\lambda_K>0\), satisfying \(\sum_k\lambda_k=1\). Set
\begin{equation}
\label{eqn:stacked.Lambda}
\mathbf L_{\cdot,t}:=[L_{\cdot,t}^1\ \cdots\ L_{\cdot,t}^K]\in\bR^{Td\times Kd},
\quad
\Lambda:=\diag(\lambda_1I_d,\ldots,\lambda_KI_d).
\end{equation}
For $t\in[T]$ and $\mathbf P_t\in\mathscr P_+(K,d)$, define the one-step trace functional
\begin{equation}\label{eqn:trace.functional}
\Phi_t(\mathbf P_t):=\tr(\Lambda\mathbf L_{\cdot,t}^\intercal\mathbf L_{\cdot,t})-\tr(\Lambda\mathbf L_{\cdot,t}^\intercal\mathbf L_{\cdot,t}\Lambda\mathbf P_t).
\end{equation}
For $\mathbf P=\diag(\mathbf P_1,\ldots,\mathbf P_T)\in\mathscr P_+(K,T,d)$, define the global trace functional by
\begin{equation}
\label{eq:Phi.global}
\Phi(\mathbf P):=\sum_{t=1}^T\Phi_t(\mathbf P_t).
\end{equation}

\begin{lemma}
\label{lem:trace.form.multicausal.cost}
Let \(\pi\in\Cpl_{\mathrm{mc}}(\bX^1,\ldots,\bX^K)\) be any multicausal coupling, not necessarily Gaussian. For each \(t\in[T]\), define
\[
\mathbf{P}_t:=\left(\E_\pi[\eps_t^k(\eps_t^\ell)^\intercal]\right)_{k,\ell=1}^K.
\]
Then \(\mathbf{P}_t\in\mathscr{P}_+(K,d)\) for every \(t\), and 
\begin{equation}
\label{eq:trace.form.multicausal.cost}
\mathbb{E}_\pi\left[\sum_{k=1}^K \lambda_k \left\|X^k - \phi^0(X^1, \ldots, X^K)\right\|_2^2\right]
=\Phi(\mathbf{P}) = \sum_{t=1}^T\Phi_t(\mathbf P_t).
\end{equation}
In particular every multicausal coupling satisfies
\begin{equation}
\label{eq:multicausal.lower.bound.finite.dim}
\mathbb{E}_\pi\left[\sum_{k=1}^K \lambda_k \left\|X^k - \phi^0(X^1, \ldots, X^K)\right\|_2^2\right]
\ge\sum_{t=1}^T\min_{\mathbf P_t\in\mathscr P_+(K,d)}\Phi_t(\mathbf P_t).
\end{equation}
\end{lemma}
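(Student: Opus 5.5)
The plan is to expand the quadratic cost directly in terms of the noises and to use multicausality only to kill the cross-time covariance terms. Positive semidefiniteness of $\mathbf P_t$ is immediate, since it is the second-moment matrix of the random vector $\widebar{\boldsymbol\epsilon}_t$. The diagonal blocks satisfy $\mathbf P_t^{k,k}=\E[\eps^k_t(\eps^k_t)^\intercal]=I_d$, because the $k$-th marginal of $\pi$ is $\cN_{Td}(0,I)$. Hence $\mathbf P_t\in\mathscr P_+(K,d)$.

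For the cost, I will use the variance identity for the weighted mean:
\[
\sum_k\lambda_k\|x^k-\textstyle\sum_\ell\lambda_\ell x^\ell\|^2=\sum_k\lambda_k\|x^k\|^2-\|\textstyle\sum_k\lambda_k x^k\|^2 .
\]
Since the inputs are centered, $X^k=L^k\eps^k=\sum_t L^k_{\cdot,t}\eps^k_t$. Stacking over $k$ gives
\[
\sum_k\lambda_kX^k=\sum_t\mathbf L_{\cdot,t}\Lambda\widebar{\boldsymbol\epsilon}_t .
\]
The first term has expectation $\sum_k\lambda_k\|L^k\|_F^2=\sum_t\tr(\Lambda\mathbf L_{\cdot,t}^\intercal\mathbf L_{\cdot,t})$. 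For the second term, expanding gives
\[
\E\|\textstyle\sum_t\mathbf L_{\cdot,t}\Lambda\widebar{\boldsymbol\epsilon}_t\|^2=\sum_{s,t}\tr\bigl(\Lambda\mathbf L_{\cdot,t}^\intercal\mathbf L_{\cdot,s}\Lambda\,\E[\widebar{\boldsymbol\epsilon}_s\widebar{\boldsymbol\epsilon}_t^\intercal]\bigr).
\]

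The key step is to show that $\E_\pi[\eps^k_t(\eps^\ell_s)^\intercal]=0$ for $s\neq t$ and arbitrary $k,\ell$, for a general (non-Gaussian) multicausal $\pi$. This is the first half of the proof of Lemma~\ref{thm:gaussian.mc.characterization}, which did not use Gaussianity of $\pi$. Take $t>s$. Multicausality says that $\mathcal F^k_T$ is conditionally independent of $\widebar{\mathcal F}_s$ given $\mathcal F^k_s$, so
\[
\E_\pi[\eps^k_t\mid\widebar{\mathcal F}_s]=\E[\eps^k_t\mid\mathcal F^k_s]=0 ,
\]
where the last equality holds because under $\bP^k$ the noise $\eps^k_t$ is independent of $\eps^k_{1:s}$ and centered. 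Since $\eps^\ell_s$ is $\widebar{\mathcal F}_s$-measurable and square integrable, the tower property gives the vanishing cross moment. The case $s>t$ follows by transposition.

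Only the diagonal terms $s=t$ then survive. Using the cyclic property of the trace, they sum to $\sum_t\tr(\Lambda\mathbf L_{\cdot,t}^\intercal\mathbf L_{\cdot,t}\Lambda\mathbf P_t)$, and subtracting this from the first term yields \eqref{eq:trace.form.multicausal.cost}. The lower bound \eqref{eq:multicausal.lower.bound.finite.dim} follows by bounding each $\Phi_t(\mathbf P_t)$ below by its minimum over $\mathscr P_+(K,d)$. This minimum is attained because $\mathscr P_+(K,d)$ is compact: it is closed, and every entry is bounded by $1$ via the $2\times2$ principal minors. Moreover, $\Phi_t$ is linear in $\mathbf P_t$, hence continuous. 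The only step needing care is the conditional-expectation argument: it requires the integrability just noted and the product-structure identification of $\mathcal F^k_s$ with $\sigma(\eps^k_{1:s})$, both of which hold here.
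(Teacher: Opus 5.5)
Your proposal is correct and follows essentially the same route as the paper: you obtain membership in $\mathscr P_+(K,d)$ from the marginal laws, expand the cost via the weighted variance identity, and kill the cross-time terms by conditioning on the product filtration and using multicausality together with the white-noise marginals. Your remark that $\mathscr P_+(K,d)$ is compact, which justifies writing $\min$ in the lower bound, is a harmless addition; the paper defers this attainment to the proof of Theorem~\ref{thm:gaussian.multicausal.optimizer}.
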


\begin{proof}
For fixed \(t\), the matrix \(\mathbf{P}_t\) is the covariance matrix of the stacked vector \((\eps_t^1,\ldots,\eps_t^K)\) under \(\pi\).  It is therefore positive semidefinite.  Since every marginal \(\eps^k\) is a standard Gaussian white noise, its diagonal blocks are \(\mathbf P_t^{k,k}=I_d\).  Hence \(\mathbf P_t\in\mathscr{P}_+(K,d)\).

Letting \(X^k=L^k\eps^k\), we have
\[
\sum_{k=1}^K\lambda_k\E_\pi\|L^k\eps^k\|_2^2
=\sum_{k=1}^K \lambda_k\|L^k\|_\mathrm{F}^2=\sum_{t=1}^T \tr({\Lambda}\mathbf L_{\cdot, t}^\intercal\mathbf L_{\cdot, t}),
\]
because the marginals of the noises are standard Gaussians.  Moreover,
\[
\E_\pi\left\|\sum_{k=1}^K\lambda_kL^k\eps^k\right\|_2^2
=
\sum_{k,\ell=1}^K\lambda_k\lambda_\ell
\sum_{s,u=1}^T
\tr\left((L_{\cdot,s}^k)^\intercal L_{\cdot,u}^\ell
\E_\pi[\eps_u^\ell(\eps_s^k)^\intercal]\right).
\]
The cross-time terms vanish. Indeed, if $s<u$, multicausality and the white-noise marginal imply
\[
\E_\pi[\eps_u^\ell\mid\widebar{\cF}_{u-1}]
=\E_\pi[\eps_u^\ell\mid\cF_{u-1}^\ell]=0,
\]
while $\eps_s^k$ is $\widebar{\cF}_{u-1}$-measurable; the case $u<s$ follows symmetrically. Thus only $s=u$ remains, and the last display is exactly
\[
\sum_{t=1}^T
\tr(\Lambda\mathbf L_{\cdot,t}^\intercal\mathbf L_{\cdot,t}
\Lambda \mathbf P_t).
\]
Subtracting this from the first term gives $\sum_t\Phi_t(\mathbf P_t)$, proving \eqref{eq:trace.form.multicausal.cost}. The lower bound follows by minimizing each local term over $\mathscr P_+(K,d)$.
\end{proof}

\subsection{Existence of \texorpdfstring{$\FP_2$}{FP\_2}-barycenter}


\begin{theorem}[Existence of adapted barycenter]\label{thm:gaussian.multicausal.optimizer} 
For centered filtered Gaussian processes $\bX^1 = \bG^{L^1},\dots,\bX^K = \bG^{L^K}\in\cFG(T,d)$, the following statements hold:
\begin{enumerate}
\item The barycenter value is given by
\[
\inf_{\bY\in\FP_2}\sum_{k=1}^K \lambda_k\AW_2^2(\bX^k,\bY)=\sum_{t=1}^T\min_{\mathbf P_t\in\mathscr{P}_+(K,d)}\Phi_t(\mathbf P_t),
\]
and each local minimum is attained. 
\item For any local minimizers $\mathbf P^\star_1,\dots,\mathbf P^\star_T\in\mathscr{P}_+(K,d)$ and factors $\mathbf Q_t\in\bR^{Kd\times Kd}$ satisfying $\mathbf P^\star_t=\mathbf Q_t\mathbf Q_t^\intercal$, the ensemble filtered Gaussian process $\bG_K^{0,\widebar{\mathbf L}}$, with block columns
\[
\widebar{\mathbf{L}}_{\cdot, t}=\mathbf{L}_{\cdot, t}\Lambda\mathbf Q_t,
\]
for $t\in[T]$, is an adapted barycenter of $\bX^1,\dots,\bX^K$. In particular, a minimizer of \eqref{eq:AWbar} can be selected as an ensemble filtered Gaussian process. Its path covariance is $\sum_{t=1}^T\widebar{A}_t$, where the innovation covariance contributions are
\begin{equation}\label{eq:bary.cov}
\widebar{A}_t:=\widebar{\mathbf L}_{\cdot,t}\widebar{\mathbf L}_{\cdot,t}^\intercal=\mathbf L_{\cdot,t}\Lambda\mathbf P_t^\star\Lambda\mathbf L_{\cdot,t}^\intercal.
\end{equation}
\end{enumerate}
\end{theorem}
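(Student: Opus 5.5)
The plan is to combine the multicausal reformulation \eqref{eq:mc} with Lemma~\ref{lem:trace.form.multicausal.cost} (lower bound) and Lemma~\ref{thm:gaussian.mc.characterization} (attainment by a Gaussian coupling). We first record that each local problem $\min_{\mathbf P_t\in\mathscr P_+(K,d)}\Phi_t(\mathbf P_t)$ is attained. The set $\mathscr P_+(K,d)$ is closed, since it is the intersection of the PSD cone with an affine subspace. It is also bounded, since every entry of a PSD matrix with unit diagonal has modulus at most $1$. Hence it is compact, and $\Phi_t$ is affine, so it is continuous and attains its minimum.

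For the lower bound, let $\bY\in\FP_2$ be arbitrary. Gluing optimal bicausal couplings of $(\bX^k,\bY)$ along $\bY$ gives a multicausal coupling $\pi$ of $\bX^1,\dots,\bX^K$, as recalled in Section~\ref{sec:multicausal}. Under this coupling, $\sum_k\lambda_k\AW_2^2(\bX^k,\bY)=\E\sum_k\lambda_k\|X^k-Y\|^2$. This is at least $\E_\pi\sum_k\lambda_k\|X^k-\phi^0(X)\|^2$, because the weighted mean minimizes the pointwise quadratic. Applying \eqref{eq:multicausal.lower.bound.finite.dim} then bounds it below by $\sum_t\min\Phi_t$. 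Equivalently, we may cite the equivalence of \eqref{eq:AWbar} with \eqref{eq:mc} from \cite[Example~4.23]{acciaio2025multicausal} and apply Lemma~\ref{lem:trace.form.multicausal.cost} directly.

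For the upper bound, and for part (2), take local minimizers $\mathbf P_t^\star$ and set $\mathbf P^\star=\diag(\mathbf P_1^\star,\dots,\mathbf P_T^\star)\in\mathscr P_+(K,T,d)$. By Lemma~\ref{thm:gaussian.mc.characterization}, $\pi^\star=\cN(0,\mathbf P^\star)$ is a multicausal coupling. By \eqref{eq:trace.form.multicausal.cost}, its cost equals $\sum_t\Phi_t(\mathbf P_t^\star)$, so it is optimal for \eqref{eq:mc}. By \cite[Theorem~4.19]{acciaio2025multicausal}, the process $\widebar\bX$ of \eqref{eqn:mc.to.barycenter} is a barycenter, and the value identity in (1) follows. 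For the representation, we repeat the proof of Lemma~\ref{lem:ensemble} with the symmetric square root $\mathbf M_t$ replaced by an arbitrary factor $\mathbf Q_t$ satisfying $\mathbf Q_t\mathbf Q_t^\intercal=\mathbf P_t^\star$. We couple by setting $\overline{\boldsymbol\epsilon}_t=\mathbf Q_t\boldsymbol\epsilon_t$ with $\boldsymbol\epsilon$ standard. The paths then agree: $\widebar X_t=\sum_s\sum_k\lambda_kL^k_{t,s}\epsilon^k_s=(\mathbf L\Lambda\mathbf Q\boldsymbol\epsilon)_t$, which gives $\widebar{\mathbf L}_{\cdot,t}=\mathbf L_{\cdot,t}\Lambda\mathbf Q_t$. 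Bicausality of this zero-cost coupling follows from the same Moore--Penrose decomposition $\boldsymbol\epsilon_t=\mathbf Q_t^\dagger\overline{\boldsymbol\epsilon}_t+(I-\mathbf Q_t^\dagger\mathbf Q_t)\boldsymbol\epsilon_t$. The null-space part is independent of $\overline{\boldsymbol\epsilon}$, because $\mathbf Q_t(I-\mathbf Q_t^\dagger\mathbf Q_t)=0$, the components are jointly Gaussian with zero cross-covariance, and noises are independent across times. Hence $\AW_2(\widebar\bX,\bG_K^{0,\widebar{\mathbf L}})=0$, and $\bG_K^{0,\widebar{\mathbf L}}$ is a barycenter in $\FP_2$. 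The covariance formula is immediate from independence of the innovations: $\Cov(\widebar X)=\sum_t\widebar{\mathbf L}_{\cdot,t}\widebar{\mathbf L}_{\cdot,t}^\intercal=\sum_t\mathbf L_{\cdot,t}\Lambda\mathbf P_t^\star\Lambda\mathbf L_{\cdot,t}^\intercal$.

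The main obstacle is the lower-bound step. It rests on the fact that an arbitrary, possibly non-Gaussian, $\bY$ produces a multicausal coupling whose cost dominates the barycenter objective. This is imported from the gluing and equivalence results of \cite{acciaio2025multicausal}, so care is needed that the glued coupling lives on the product of the input spaces with the required conditional independences. Once this is in place, Lemma~\ref{lem:trace.form.multicausal.cost}, which holds for non-Gaussian $\pi$, closes the gap. A secondary point is the singular-$\mathbf Q_t$ bicausality check, which we handle exactly as in Lemma~\ref{lem:ensemble}.
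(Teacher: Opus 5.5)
Your proposal is correct and follows the paper's proof. Attainment comes from compactness of $\mathscr P_+(K,d)$, the lower bound from Lemma~\ref{lem:trace.form.multicausal.cost} (you make the reduction from an arbitrary $\bY$ explicit via gluing, where the paper cites the multicausal equivalence), attainment of the bound from the block-diagonal Gaussian coupling $\cN(0,\mathbf P^\star)$ via Lemma~\ref{thm:gaussian.mc.characterization}, and the representative from \cite[Theorem~4.19]{acciaio2025multicausal} together with the Lemma~\ref{lem:ensemble} coupling. The only deviation is that you run the Lemma~\ref{lem:ensemble} coupling directly with an arbitrary factor $\mathbf Q_t$, which is valid because that argument never uses symmetry of the factor. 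The paper instead first uses $(\mathbf P_t^\star)^{1/2}$ and then transfers to $\mathbf Q_t$ through the column-covariance formula of Remark~\ref{rmk:AW2.ensemble}, so your version is slightly more self-contained.
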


\begin{proof}
For each $t$, the set $\mathscr P_+(K,d)$ is compact and $\Phi_t$ is continuous, so a minimizer $\mathbf P_t^\star$ exists. Lemma~\ref{lem:trace.form.multicausal.cost} gives, for every multicausal coupling $\pi$,
\[
\E_\pi\!\left[\sum_{k=1}^K\lambda_k\left\|X^k-\phi^0(X^1,\ldots,X^K)\right\|_2^2\right]
\ge \sum_{t=1}^T\Phi_t(\mathbf P_t^\star).
\]
Set $\mathbf P^\star:=\diag(\mathbf P_1^\star,\ldots,\mathbf P_T^\star)$ and let $\pi^\star:=\cN_{KTd}(\mathbf0,\mathbf P^\star)$. Its one-step blocks lie in $\mathscr P_+(K,d)$ and are independent across time, so Lemma~\ref{thm:gaussian.mc.characterization} shows that $\pi^\star$ is a Gaussian multicausal coupling. Applying Lemma~\ref{lem:trace.form.multicausal.cost} once more gives equality in the preceding lower bound. Hence $\pi^\star$ solves the multicausal problem \eqref{eq:mc}, proving the value formula.

By \cite[Theorem~4.19]{acciaio2025multicausal}, the barycentric projection
$\widebar{\bX}$ of $\pi^\star$ is an adapted barycenter. Apply
Lemma~\ref{lem:ensemble} with the symmetric factors $(\mathbf P_t^\star)^{1/2}$.
This gives an ensemble factor $\widetilde{\mathbf L}$ such that
\[
\AW_2(\widebar{\bX},\bG_K^{0,\widetilde{\mathbf L}})=0,
\qquad
\widetilde{\mathbf L}_{\cdot,t}\widetilde{\mathbf L}_{\cdot,t}^\intercal
=\mathbf L_{\cdot,t}\Lambda\mathbf P_t^\star\Lambda\mathbf L_{\cdot,t}^\intercal.
\]
Now choose any $\mathbf Q_t\in\bR^{Kd\times Kd}$ with $\mathbf Q_t\mathbf Q_t^\intercal=\mathbf P_t^\star$ and define $\overline{\mathbf L}$ by
\[
\overline{\mathbf L}_{\cdot,t}:=\mathbf L_{\cdot,t}\Lambda\mathbf Q_t,
\qquad t\in[T].
\]
Then $\widebar{\mathbf L}_{\cdot,t}\overline{\mathbf L}_{\cdot,t}^\intercal
=\widetilde{\mathbf L}_{\cdot,t}\widetilde{\mathbf L}_{\cdot,t}^\intercal$ for every $t$. The column-covariance formula of Remark~\ref{rmk:AW2.ensemble} therefore gives
$\AW_2(\bG_K^{0,\widetilde{\mathbf L}},\bG_K^{0,\overline{\mathbf L}})=0$. Hence
$\AW_2(\widebar{\bX},\bG_K^{0,\overline{\mathbf L}})=0$, so the barycenter property transfers to $\bG_K^{0,\overline{\mathbf L}}$. Finally,
\[
\widebar{\mathbf L}_{\cdot,t}\widebar{\mathbf L}_{\cdot,t}^\intercal
=\mathbf L_{\cdot,t}\Lambda\mathbf P_t^\star\Lambda\mathbf L_{\cdot,t}^\intercal,
\]
which proves \eqref{eq:bary.cov}.
\end{proof}

For non-centered inputs $\bX^k=\bG^{a^k,L^k}$, deterministic translations preserve bicausality and give
\[
\AW_2^2(\bG^{a^k,L^k},\bY)
=\|a^k-b\|_2^2+\AW_2^2(\bG^{0,L^k},\bY-b),
\]
where $b$ is the mean of $Y$, and $\bY - b$ has the filtered probability space of $\bY$ and has process $Y - b$. Thus the deterministic mean and centered problems separate. Setting $\overline a:=\sum_{k=1}^K\lambda_ka^k$, the mean term is minimized uniquely at $b=\widebar{a}$, and
\begin{equation}\label{eq:unrestricted.value}
\inf_{\bY\in\FP_2}\sum_{k=1}^K\lambda_k\AW_2^2(\bX^k,\bY)
=
\sum_{k=1}^K\lambda_k\|a^k- \widebar{a}\|_2^2
+
\sum_{t=1}^T\min_{\mathbf P_t\in\mathscr P_+(K,d)}\Phi_t(\mathbf P_t).
\end{equation}
If \(\mathbf P_t^\star\) are local minimizers and \(\mathbf Q_t=(\mathbf P_t^\star)^{1/2}\), we write the corresponding ensemble barycenter factor as
\begin{equation}\label{eq:extended.barycenter.factor}
\overline{\mathbf L}_{\cdot,t}:=\mathbf L_{\cdot,t}\Lambda\mathbf Q_t,
\qquad t\in[T].
\end{equation}
Equivalently, the one-step trace functional has the expanded form
\begin{align}
\label{eq:Phi_t}
\Phi_t(\mathbf P_t)
&:=\tr(\Lambda\mathbf L_{\cdot,t}^{\intercal}\mathbf L_{\cdot,t})
-\tr(\Lambda\mathbf L_{\cdot,t}^{\intercal}\mathbf L_{\cdot,t}\Lambda \mathbf P_t) \\
\label{eq:Phi_t.expanded}
&=\sum_{k=1}^K\lambda_k\|L_{\cdot,t}^k\|_F^2
-\sum_{k,\ell=1}^K\lambda_k\lambda_\ell
\tr\!\left((L_{\cdot,t}^k)^{\intercal}L_{\cdot,t}^{\ell}\mathbf P_t^{\ell,k}\right).
\end{align}

\subsection{An explicit value formula}
\label{sec:explicit.value}

Theorem~\ref{thm:gaussian.multicausal.optimizer} expresses the unrestricted value through the $T$ local minimizations $\min_{\mathbf P_t\in\mathscr{P}_+(K,d)}\Phi_t(\mathbf P_t)$, whose attainment it also provides.  We now identify these local problems: each is a \emph{classical} Bures--Wasserstein barycenter problem for the column covariance matrices
\begin{equation}
\label{eq:A_t_k}
A_t^k:=L_{\cdot,t}^k(L_{\cdot,t}^k)^\intercal\in\mathscr S_+(Td),
\qquad k\in[K],\ t\in[T].
\end{equation}
This removes the optimization over couplings from the value formula entirely.

\begin{theorem}[Local problems as classical Bures--Wasserstein barycenter problems]
\label{thm:explicit.value}
For every $t\in[T]$,
\begin{equation}
\label{eq:local.BW.barycenter.identity}
\min_{\mathbf P_t\in\mathscr{P}_+(K,d)}\Phi_t(\mathbf P_t)
=
\min_{\Sigma\in\mathscr S_+(Td)}
\sum_{k=1}^K\lambda_k\,\mathrm{dist}_{\BW}^2(A_t^k,\Sigma),
\end{equation}
and the map $\mathbf P_t\mapsto\mathbf L_{\cdot,t}\Lambda \mathbf P_t\Lambda\mathbf L_{\cdot,t}^\intercal$ sends the minimizers of the left-hand side onto the minimizers of the right-hand side; in particular, the right-hand minimum is attained.  Every minimizer $\overline\Sigma_t$ satisfies the trace identity
\begin{equation}
\label{eq:barycenter.trace.identity}
\tr(\overline\Sigma_t)
=
\sum_{k=1}^K\lambda_k
\tr\Bigl(\bigl(\overline\Sigma_t^{1/2}A_t^k\overline\Sigma_t^{1/2}\bigr)^{1/2}\Bigr),
\end{equation}
so that
\begin{equation}
\label{eq:explicit.local.value}
\min_{\mathbf P_t\in\mathscr{P}_+(K,d)}\Phi_t(\mathbf P_t)
=
\sum_{k=1}^K\lambda_k\|L_{\cdot,t}^k\|_F^2-\tr(\overline\Sigma_t).
\end{equation}
Consequently, with general means $a^1, \ldots, a^K$ the unrestricted value \eqref{eq:unrestricted.value} takes the explicit form
\begin{equation}
\label{eq:explicit.unrestricted.value}
\inf_{\bY\in\FP_2}\sum_{k=1}^K\lambda_k\AW_2^2(\bX^k,\bY)
=
\sum_{k=1}^K\lambda_k\|a^k-\overline a\|_2^2
+
\sum_{k=1}^K\lambda_k\|L^k\|_F^2
-
\sum_{t=1}^T\tr(\overline\Sigma_t).
\end{equation}
\end{theorem}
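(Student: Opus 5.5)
The plan is to identify both sides of \eqref{eq:local.BW.barycenter.identity} with the multi-marginal Gaussian transport problem for the column covariances $A_t^k$. Fix $t$. For $\mathbf P_t\in\mathscr P_+(K,d)$, take $\boldsymbol\xi=(\xi^1,\dots,\xi^K)\sim\cN(0,\mathbf P_t)$. Then $Z^k:=L_{\cdot,t}^k\xi^k\sim\cN(0,A_t^k)$ are coupled, and expanding \eqref{eq:Phi_t.expanded} gives
\[
\Phi_t(\mathbf P_t)=\E\Bigl[\sum_k\lambda_k\bigl\|Z^k-\textstyle\sum_\ell\lambda_\ell Z^\ell\bigr\|_2^2\Bigr].
\]
With $\Sigma:=\mathbf L_{\cdot,t}\Lambda\mathbf P_t\Lambda\mathbf L_{\cdot,t}^\intercal=\Cov(\sum_\ell\lambda_\ell Z^\ell)$, the bound $\E\|Z^k-W\|^2\ge \mathrm{dist}_\BW^2(A_t^k,\Sigma)$ for any $W\sim\cN(0,\Sigma)$ coupled with $Z^k$ yields
\[
\Phi_t(\mathbf P_t)\ge\sum_k\lambda_k\mathrm{dist}^2_\BW(A_t^k,\Sigma)\ge\min_\Sigma\sum_k\lambda_k\mathrm{dist}_\BW^2(A_t^k,\Sigma).
\]
This settles ``$\ge$''.

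For ``$\le$'', fix any $\Sigma\in\mathscr S_+(Td)$ and let $W\sim\cN(0,\Sigma)$. For each $k$, couple $Z^k$ optimally with $W$ as jointly Gaussian vectors, conditionally independently given $W$. Then $\E\sum_k\lambda_k\|Z^k-\sum\lambda_\ell Z^\ell\|^2\le\sum_k\lambda_k\E\|Z^k-W\|^2=\sum_k\lambda_k\mathrm{dist}^2_\BW(A_t^k,\Sigma)$, because the weighted mean minimizes $y\mapsto\sum\lambda_k\|Z^k-y\|^2$ pointwise. It remains to realize this Gaussian coupling of $(Z^k)$ as $Z^k=L_{\cdot,t}^k\xi^k$ with $\xi^k$ standard on $\R^d$ and $(\xi^k)$ jointly Gaussian. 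Since $Z^k$ lies in $\Image(L^k_{\cdot,t})$, set $\xi^k:=(L^k_{\cdot,t})^\dagger Z^k+\eta^k$, where $\eta^k$ is an independent standard Gaussian on $\ker L^k_{\cdot,t}$. This gives $\mathbf P_t\in\mathscr P_+(K,d)$ with $\Phi_t(\mathbf P_t)$ no larger than the right-hand side, so the two minima agree.

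Attainment on the left comes from compactness, already used in Theorem~\ref{thm:gaussian.multicausal.optimizer}. Mapping onto minimizers runs as follows. If $\mathbf P_t^\star$ is optimal, the chain of inequalities is tight, so $\Sigma^\star=\mathbf L\Lambda\mathbf P^\star\Lambda\mathbf L^\intercal$ is a minimizer on the right. Conversely, given a right-hand minimizer $\overline\Sigma_t$, I run the construction above. The inequality $\E\sum\lambda_k\|Z^k-\bar Z\|^2\le\sum\lambda_k\E\|Z^k-W\|^2$ is then an equality by optimality, and strict convexity of the pointwise minimization forces $W=\sum\lambda_\ell Z^\ell$ a.s. Hence $\mathbf L\Lambda\mathbf P_t\Lambda\mathbf L^\intercal=\overline\Sigma_t$, which gives surjectivity.

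For the trace identity \eqref{eq:barycenter.trace.identity}, I would use the same equality $W=\sum\lambda_kZ^k$ at a minimizer. Taking $\tr\Cov$ gives $\tr\overline\Sigma_t=\sum_k\lambda_k\E\langle Z^k,W\rangle$. For each $k$, $\E\langle Z^k,W\rangle$ is the optimal (maximal) correlation, namely $\tr((\overline\Sigma_t^{1/2}A_t^k\overline\Sigma_t^{1/2})^{1/2})$; this is exactly the cross term of $\mathrm{dist}_\BW$ at an optimal coupling. Substituting into $\sum\lambda_k\mathrm{dist}^2_\BW$ gives
\[
\sum_k\lambda_k\tr A_t^k+\tr\overline\Sigma_t-2\tr\overline\Sigma_t=\sum_k\lambda_k\|L^k_{\cdot,t}\|_F^2-\tr\overline\Sigma_t,
\]
which is \eqref{eq:explicit.local.value}. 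Summing over $t$, with $\sum_t\|L^k_{\cdot,t}\|_F^2=\|L^k\|_F^2$, and combining with \eqref{eq:unrestricted.value} yields \eqref{eq:explicit.unrestricted.value}.

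The main obstacle is the lifting step from an arbitrary Gaussian coupling of the $Z^k$ back to a block correlation matrix, because $L^k_{\cdot,t}$ may be rank deficient. Joint Gaussianity of the glued coupling must also be checked, which holds since the conditional laws are Gaussian with linear means. Attainment of the right-hand minimum follows directly from surjectivity.
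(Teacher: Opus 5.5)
Your proof is correct, and its skeleton matches the paper's. The lower bound uses $\Sigma(\mathbf P)=\mathbf L_{\cdot,t}\Lambda\mathbf P\Lambda\mathbf L_{\cdot,t}^\intercal$ as a competitor, giving $\Phi_t(\mathbf P)\ge\sum_k\lambda_k\mathrm{dist}_{\BW}^2(A_t^k,\Sigma(\mathbf P))$. The upper bound combines optimal pairwise alignments with a fixed $\Sigma$ and the bias--variance identity, giving $\sum_k\lambda_k\mathrm{dist}_{\BW}^2(A_t^k,\Sigma)=\Phi_t(\mathbf P)+\E\|W-\bar Z\|^2$. The vanishing of the last term at a minimizer yields surjectivity.

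The differences are in execution.

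\emph{Factors versus random vectors.} The paper's lower bound is operator/nuclear-norm duality rather than the $\mathcal W_2$ bound. For the upper bound it takes $\mathbf M^k=U^k(V^k)^\intercal$ from an SVD of $(L^k_{\cdot,t})^\intercal\Sigma^{1/2}$. In your language this is the common-driver coupling $\xi^k=\mathbf M^kg$, $W=\Sigma^{1/2}g$, $g\sim\cN(0,I_{Td})$, rather than a conditionally independent gluing along $W$.

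\emph{What the paper's construction buys.} Because these $\mathbf M^k$ have orthonormal rows by construction, the paper needs no pseudo-inverse/kernel-noise lifting; the orthonormal completion of zero singular directions does the work of your $\eta^k$. It also makes the optimal pairwise alignment explicit via \eqref{eqn:d.BW} instead of invoking an abstract optimal Gaussian coupling. Finally, it produces an explicit factorization $\mathbf P=\mathbf M\mathbf M^\intercal$, which is reused in Proposition~\ref{prop:sdp.factorized} and Algorithm~\ref{alg:unrestricted}.

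\emph{The trace identity.} The paper uses a dilation argument: $s\mapsto\sum_k\lambda_k\mathrm{dist}_{\BW}^2(A_t^k,s^2\overline\Sigma_t)$ is a quadratic minimized at $s=1$. This is a first-order condition on the Bures--Wasserstein objective alone. You instead pair $W=\bar Z$ with $W$ and use optimality of each $(Z^k,W)$. Your route exhibits the identity as the trace of the barycentric relation $W=\sum_k\lambda_kZ^k$, but it needs the equality case of your upper bound first.

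\emph{Minor points.}
\begin{itemize}
\item Joint Gaussianity of the glued coupling is not actually needed in the upper bound, since $\Phi_t$ and $\Sigma(\cdot)$ depend only on $\Cov(\xi)$.
\item In the lower-bound chain, write $\inf_\Sigma$ until attainment has been shown.
\item Attainment on the right comes from tightness of the chain at an optimal $\mathbf P_t^\star$, not from surjectivity, which presupposes it.
\end{itemize}
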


\begin{proof}
{\sc Step 1} (lower bound).  Fix $\mathbf P\in\mathscr{P}_+(K,d)$ and choose $\mathbf N\in\bR^{Kd\times Kd}$ with $\mathbf N\mathbf N^\intercal=\mathbf P$; its block rows $\mathbf N^k\in\bR^{d\times Kd}$ satisfy $\mathbf N^k(\mathbf N^k)^\intercal=\mathbf P^{k,k}=I_d$, so $\|\mathbf N^k\|_{\mathrm{op}}=1$.  With
\[
\mathbf Y:=\mathbf L_{\cdot,t}\Lambda\mathbf N=\sum_{k=1}^K\lambda_kL_{\cdot,t}^k\mathbf N^k,
\qquad
\Sigma(\mathbf P):=\mathbf L_{\cdot,t}\Lambda\mathbf P\Lambda\mathbf L_{\cdot,t}^\intercal=\mathbf Y\mathbf Y^\intercal,
\]
we have $\|\mathbf Y\|_F^2=\tr(\Lambda\mathbf L_{\cdot,t}^\intercal\mathbf L_{\cdot,t}\Lambda\mathbf P)$, i.e.\ $\Phi_t(\mathbf P)=\sum_k\lambda_k\|L_{\cdot,t}^k\|_F^2-\|\mathbf Y\|_F^2$, and by the duality of the operator and nuclear norms,
\[
\|\mathbf Y\|_F^2
=\sum_{k=1}^K\lambda_k\bigl\langle \mathbf N^k,(L_{\cdot,t}^k)^\intercal\mathbf Y\bigr\rangle
\le\sum_{k=1}^K\lambda_k\bigl\|(L_{\cdot,t}^k)^\intercal\mathbf Y\bigr\|_* .
\]
Hence, by \eqref{eqn:d.BW} applied with the factors $L_{\cdot,t}^k$ and $\mathbf Y$ of $A_t^k$ and $\Sigma(\mathbf P)$,
\begin{equation}
\label{eq:step1.coupling.bound}
\sum_{k=1}^K\lambda_k \mathrm{dist}_{\BW}^2\bigl(A_t^k,\Sigma(\mathbf P)\bigr)
=
\sum_{k=1}^K\lambda_k\|L_{\cdot,t}^k\|_F^2+\|\mathbf Y\|_F^2
-2\sum_{k=1}^K\lambda_k\bigl\|(L_{\cdot,t}^k)^\intercal\mathbf Y\bigr\|_*
\le
\Phi_t(\mathbf P);
\end{equation}
probabilistically, $\Sigma(\mathbf P)$ is the covariance of the barycentric projection $\sum_\ell\lambda_\ell L_{\cdot,t}^\ell\eps_t^\ell$ under any coupling with one-step covariance $\mathbf P$.

\medskip
{\sc Step 2} (upper bound). Fix $\Sigma\in\mathscr S_+(Td)$. For each $k$, choose a thin singular value decomposition
\[
(L_{\cdot,t}^k)^\intercal\Sigma^{1/2}=U^kS^k(V^k)^\intercal,
\qquad U^k\in\mathscr O(d),\quad (V^k)^\intercal V^k=I_d,
\]
where zero singular directions are completed orthonormally if the matrix has rank smaller than $d$. Set $\mathbf M^k:=U^k(V^k)^\intercal\in\bR^{d\times Td}$. Then $\mathbf M^k(\mathbf M^k)^\intercal=I_d$ and $\langle\mathbf M^k,(L_{\cdot,t}^k)^\intercal\Sigma^{1/2}\rangle=\|(L_{\cdot,t}^k)^\intercal\Sigma^{1/2}\|_*$. Using $\|L_{\cdot,t}^k\mathbf M^k\|_F=\|L_{\cdot,t}^k\|_F$ and \eqref{eqn:d.BW},
\[
\|L_{\cdot,t}^k\mathbf M^k-\Sigma^{1/2}\|_F^2
=\|L_{\cdot,t}^k\|_F^2+\tr(\Sigma)-2\|(L_{\cdot,t}^k)^\intercal\Sigma^{1/2}\|_*
=\mathrm{dist}_{\BW}^2(A_t^k,\Sigma).
\]
Set $\mathbf P:=(\mathbf M^k(\mathbf M^\ell)^\intercal)_{k,\ell=1}^K\in\mathscr{P}_+(K,d)$ and $\overline B:=\sum_{k=1}^K\lambda_kL_{\cdot,t}^k\mathbf M^k$.  The weighted bias--variance decomposition around $\overline B$ (whose cross term vanishes), together with the expansion \eqref{eq:Phi_t.expanded} of $\|\overline B\|_F^2=\tr(\Lambda\mathbf L_{\cdot,t}^\intercal\mathbf L_{\cdot,t}\Lambda\mathbf P)$, gives
\begin{equation}
\label{eq:step2.master.identity}
\sum_{k=1}^K\lambda_k \mathrm{dist}_{\BW}^2(A_t^k,\Sigma)
=\sum_{k=1}^K\lambda_k\|L_{\cdot,t}^k\mathbf M^k-\Sigma^{1/2}\|_F^2
=\Phi_t(\mathbf P)+\|\overline B-\Sigma^{1/2}\|_F^2 .
\end{equation}

\medskip
{\sc Step 3} (identity and correspondence).  The left minimum in \eqref{eq:local.BW.barycenter.identity} is attained by Theorem~\ref{thm:gaussian.multicausal.optimizer}; let $\mathbf P^\star$ be a minimizer.  Combining Step~1 at $\mathbf P^\star$ with \eqref{eq:step2.master.identity},
\[
\min_{\mathbf P}\Phi_t(\mathbf P)
\;\ge\;\sum_k\lambda_k\mathrm{dist}_{\BW}^2\bigl(A_t^k,\Sigma(\mathbf P^\star)\bigr)
\;\ge\;\inf_{\Sigma}\sum_k\lambda_k\mathrm{dist}_{\BW}^2(A_t^k,\Sigma)
\;\ge\;\min_{\mathbf P}\Phi_t(\mathbf P),
\]
so all three quantities coincide: \eqref{eq:local.BW.barycenter.identity} holds and $\Sigma(\mathbf P^\star)$ attains the right-hand minimum.  Conversely, if $\overline\Sigma_t$ is any minimizer of the right-hand side, construct $\mathbf P$ and $\overline B$ from $\Sigma=\overline\Sigma_t$ as in Step~2; then \eqref{eq:step2.master.identity} forces $\Phi_t(\mathbf P)=\min_{\mathbf P'}\Phi_t(\mathbf P')$ and $\overline B=\overline\Sigma_t^{1/2}$, whence $\mathbf L_{\cdot,t}\Lambda\mathbf P\Lambda\mathbf L_{\cdot,t}^\intercal=\overline B\overline B^\intercal=\overline\Sigma_t$.  Thus the correspondence is onto; in particular $\rank(\overline\Sigma_t)\le Kd$, since $\overline B$ is a sum of $K$ matrices of rank at most $d$.

\medskip
{\sc Step 4} (trace identity and value formulas). For $s\ge0$,
\[
\bigl((s\overline\Sigma_t^{1/2})A_t^k(s\overline\Sigma_t^{1/2})\bigr)^{1/2}
=s\bigl(\overline\Sigma_t^{1/2}A_t^k\overline\Sigma_t^{1/2}\bigr)^{1/2}.
\]
Consequently,
\[
s\ \mapsto\ \sum_{k=1}^K\lambda_k\mathrm{dist}_{\BW}^2(A_t^k,s^2\overline\Sigma_t)
=\sum_{k=1}^K\lambda_k\tr(A_t^k)+s^2\tr(\overline\Sigma_t)
-2s\sum_{k=1}^K\lambda_k\tr\bigl((\overline\Sigma_t^{1/2}A_t^k\overline\Sigma_t^{1/2})^{1/2}\bigr)
\]
is a quadratic polynomial minimized over $[0,\infty)$ at the interior point $s=1$; setting its derivative at $s=1$ to zero yields \eqref{eq:barycenter.trace.identity}, and substituting back gives \eqref{eq:explicit.local.value} because $\tr(A_t^k)=\|L_{\cdot,t}^k\|_F^2$. Finally, substituting \eqref{eq:explicit.local.value} into \eqref{eq:unrestricted.value} yields \eqref{eq:explicit.unrestricted.value}.
\end{proof}

\begin{remark}[Explicit and special cases]
\label{rem:explicit.cases}
For $K=2$, writing $C:=\mathbf P_t^{2,1}$, we have $\mathbf P_t\in\mathscr{P}_+(2,d)$ if and only if $\|C\|_{2\to 2}\le1$, and by \eqref{eq:Phi_t.expanded} and $\lambda_k-\lambda_k^2=\lambda_1\lambda_2$,
\[
\Phi_t(\mathbf P_t)
=\lambda_1\lambda_2\Bigl(\|L_{\cdot,t}^1\|_F^2+\|L_{\cdot,t}^2\|_F^2\Bigr)
-2\lambda_1\lambda_2\tr\bigl((L_{\cdot,t}^1)^\intercal L_{\cdot,t}^2\,C\bigr).
\]
Maximizing the trace over contractions (duality of the operator and nuclear norms) yields the closed form
\[
\min_{\mathbf P_t\in\mathscr{P}_+(2,d)}\Phi_t(\mathbf P_t)
=\lambda_1\lambda_2
\Bigl(\|L_{\cdot,t}^1\|_F^2+\|L_{\cdot,t}^2\|_F^2-2\|(L_{\cdot,t}^1)^\intercal L_{\cdot,t}^2\|_*\Bigr)
=\lambda_1\lambda_2\,\mathrm{dist}_{\BW}^2(A_t^1,A_t^2),
\]

\medskip
When $A^k_t$ is positive definite for some $k$, $\overline{\Sigma}_t$ is the unique barycenter covariance and satisfies the Agueh--Carlier fixed-point equation $\overline\Sigma_t=\sum_k\lambda_k(\overline\Sigma_t^{1/2}A_t^k\overline\Sigma_t^{1/2})^{1/2}$; see \cite{AC11}. Under the full-column convention used here, $\rank(A_t^k)\le d<Td$ whenever $T>1$, so all column covariances are singular and minimizers need not be unique. Uniqueness of minimizers and availability of a closed form for the \emph{value} are separate issues: for $K=2$, the value has the closed form displayed above, but the set of minimizers may still not be unique. In general,
\[
\min_{\mathbf P_t\in\mathscr P_+(K,d)}\Phi_t(\mathbf P_t)
=\sum_{k=1}^K\lambda_k\|L_{\cdot,t}^k\|_F^2
-\max_{\mathbf P_t\in\mathscr P_+(K,d)}
\tr(\Lambda\mathbf L_{\cdot,t}^\intercal\mathbf L_{\cdot,t}\Lambda\mathbf P_t),
\]
the value of a semidefinite program; no closed-form expression is known in general.
\end{remark}

\subsection{A uniqueness condition}

Theorem \ref{thm:gaussian.multicausal.optimizer} constructs an unrestricted barycenter as an extended filtered Gaussian process. We now provide a
condition under which the barycenter is unique. The condition is a simultaneous strict regularity condition on the
local stacked matrices \(\mathbf L_{\cdot,t}\).  For \(K=1\), it reduces exactly to the
usual local regularity condition $L^1\in\mathscr{L}^{\mathrm{reg}}(T,d)$, that is
\((L_{\cdot,t}^1)^\intercal L_{\cdot,t}^1\in\mathscr S_{++}(d)\) for each $t$. For \(K>1\), it additionally requires the block-columns of the different inputs to admit compatible orthogonal orientations. This compatibility is precisely what removes the sign and rotation ambiguity in the local optimizers.

\begin{assumption}[Strict regularity]
\label{cond:uniqueness}
For every \(t\in[T]\), there exist
\[
\mathbf Q_t=\diag(Q_t^1,\ldots,Q_t^K),
\qquad Q_t^k\in\mathscr O(d),
\]
such that every \(d\times d\)-block
\begin{equation}
\label{eq:local-strict-regularity-matrix}
(\mathbf Q_t^\intercal \mathbf L_{\cdot,t}^\intercal
\mathbf L_{\cdot,t}\mathbf Q_t)_{k,\ell}\in\mathscr S_{++}(d),\quad k,\ell\in[K].
\end{equation}
Equivalently,
\begin{equation}
\label{eq:local-strict-regularity-blocks}
(Q_t^k)^\intercal (L_{\cdot,t}^k)^\intercal L_{\cdot,t}^\ell Q_t^\ell
\in\mathscr S_{++}(d),
\quad k,\ell\in[K].
\end{equation}
\end{assumption}

\begin{theorem}[Uniqueness of the barycenter]
\label{thm:uniqueness}
Let $\bX^k:=\bG^{L^k}$ for each $k\in[K]$, and suppose that the factors $L^1,\dots,L^K$ satisfy Assumption~\ref{cond:uniqueness}. For each $t\in[T]$, define
\begin{equation}
\label{eq:strict-regularity-barycenter-factor}
\widebar{L}_{\cdot,t}
:=
\sum_{k=1}^K\lambda_k L_{\cdot,t}^kQ_t^k.
\end{equation}
Then \(\widebar{L}\in\mathscr L(T,d)\), and \(\bG^{\overline L}\in\cFG(T,d)\) is the unique unrestricted $\AW_2$-barycenter of \(\bX^1,\ldots,\bX^K\) in \(\FP_2\).  
\end{theorem}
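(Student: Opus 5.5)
The plan has three parts: show the local problems have a unique and explicit minimizer under Assumption~\ref{cond:uniqueness}, use it to identify the barycenter, and then prove uniqueness in $\FP_2$ by a gluing argument.

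\textbf{Step 1: the local minimizer.} First, $\widebar L\in\mathscr L(T,d)$ because each $L^k_{\cdot,t}Q^k_t$ vanishes in the blocks above row $t$. By \eqref{eqn:trace.functional}, minimizing $\Phi_t$ is the same as maximizing $\tr(\Lambda\mathbf L_{\cdot,t}^\intercal\mathbf L_{\cdot,t}\Lambda\mathbf P_t)$ over $\mathscr P_+(K,d)$. I will change variables to $\mathbf P_t'=\mathbf Q_t^\intercal\mathbf P_t\mathbf Q_t$, which is a bijection of $\mathscr P_+(K,d)$ onto itself. Write $B:=\mathbf Q_t^\intercal\mathbf L_{\cdot,t}^\intercal\mathbf L_{\cdot,t}\mathbf Q_t$. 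The objective then becomes $\sum_{k,\ell}\lambda_k\lambda_\ell\tr(B_{k\ell}\mathbf P_t'^{\ell,k})$.

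Each off-diagonal block of a block correlation matrix is a contraction, and by \eqref{eq:local-strict-regularity-blocks} each $B_{k\ell}\in\mathscr S_{++}(d)$. Duality of the operator and nuclear norms gives $\tr(B_{k\ell}C)\le\|B_{k\ell}\|_*=\tr B_{k\ell}$. Equality holds iff $C=I_d$: writing $B_{k\ell}=U D U^\intercal$ with $D>0$, we need $\sum_i D_{ii}(U^\intercal C U)_{ii}=\sum_i D_{ii}$ with $|(U^\intercal CU)_{ii}|\le1$, which forces $U^\intercal CU$ to have unit diagonal. A contraction with unit diagonal is the identity.

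Hence the unique local maximizer is $\mathbf P_t'=\mathbf 1\mathbf 1^\intercal\otimes I_d$, i.e.\ $\mathbf P_t^{\star,k\ell}=Q_t^k(Q_t^\ell)^\intercal$. This has rank $d$ and factors as $\mathbf P^\star_t=\mathbf Q_t\mathbf Q_t^\intercal$ with the $Kd\times d$ factor $(Q_t^1;\dots;Q_t^K)$. By Theorem~\ref{thm:gaussian.multicausal.optimizer}(2) (with this rectangular factor, harmless by Remark~\ref{rmk:AW2.ensemble}), $\bG^{\widebar L}$ is a barycenter, since $\mathbf L_{\cdot,t}\Lambda(Q_t^1;\dots;Q_t^K)=\widebar L_{\cdot,t}$.

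\textbf{Step 2: structure of every optimal multicausal coupling.} Let $\pi$ be any optimal multicausal coupling, Gaussian or not. By Lemma~\ref{lem:trace.form.multicausal.cost} its one-step covariances minimize each $\Phi_t$, so $\mathbf P_t=\mathbf P_t^\star$. Then
\[
\E_\pi\|(Q^k_t)^\intercal\eps^k_t-(Q^\ell_t)^\intercal\eps^\ell_t\|^2=2d-2\tr\bigl((Q_t^k)^\intercal Q_t^k(Q_t^\ell)^\intercal Q_t^\ell\bigr)=0,
\]
so $\eta_t:=(Q^k_t)^\intercal\eps^k_t$ does not depend on $k$ almost surely. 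Consequently
\[
\phi^0(X^1,\dots,X^K)=\sum_k\lambda_kL^k\eps^k=\widebar L\eta,
\]
where $\eta$ is a standard white noise that is an orthogonal, time-by-time adapted transform of $\eps^1$.

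\textbf{Step 3: uniqueness in $\FP_2$.} Let $\bY$ be any barycenter. Glue optimal bicausal couplings $\pi^k\in\Cpl_{\mathrm{bc}}(\bX^k,\bY)$ along $\bY$; the induced coupling of $\bX^1,\dots,\bX^K$ is multicausal. The bias--variance identity gives
\[
\sum_k\lambda_k\AW_2^2(\bX^k,\bY)=\E\sum_k\lambda_k\|X^k-\phi^0\|^2+\E\|\phi^0-Y\|^2.
\]
Minimality of $\bY$ then forces the projected coupling to be multicausal-optimal and $Y=\phi^0=\widebar L\eta$ almost surely. By Step~2, $\eta$ is obtained from $\eps^1$ by the adapted bijection $\eps^1_t\mapsto (Q^1_t)^\intercal\eps^1_t$.

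Take the bicausal coupling $\pi^1$ of $\bY$ with $\bX^1$ and push its $\bX^1$-side forward through this map. The result couples $\bY$ with $\bG^{\widebar L}$ (driven by $\eta$), is still bicausal because the map is a filtration isomorphism, and has zero cost. Hence $\AW_2(\bY,\bG^{\widebar L})=0$.

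The main obstacle I expect is this last point: checking carefully that the gluing is multicausal and that bicausality survives the transformation of $\eps^1$ into $\eta$. I would handle it by noting that $\sigma(\eta_{1:t})=\mathcal F^1_t$ for every $t$, so the filtrations coincide exactly.
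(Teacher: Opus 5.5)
Your proposal is correct and follows essentially the same route as the paper's proof. It uses the change of variables $\mathbf P_t\mapsto\mathbf Q_t^\intercal\mathbf P_t\mathbf Q_t$ with the contraction trace inequality and its equality case, then the identification $\epsilon^k_t=Q^k_t\eta_t$ for every optimal multicausal coupling via Lemma~\ref{lem:trace.form.multicausal.cost}, and finally the gluing plus bias--variance argument forcing $Y=\widebar L\eta$. The only variation is cosmetic: at the end you push the single bicausal coupling of $(\bX^1,\bY)$ forward through the adapted orthogonal map $\epsilon^1\mapsto\eta$, whereas the paper transfers the bicausality relations to the product filtration using $\mathcal F^k_t=\overline{\mathcal F}_t$ up to null sets; both rest on the same filtration coincidence $\sigma(\eta_{1:t})=\mathcal F^1_t$.
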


\begin{proof}
Fix \(t\in[T]\), and let $\mathbf Q_t$ satisfy Assumption~\ref{cond:uniqueness}. Set \(\widetilde{\mathbf{P}}_t:=\mathbf Q_t^\intercal \mathbf{P}_t\mathbf Q_t\).  Since \(\mathbf Q_t\) is
block diagonal orthogonal, the mapping
$\mathbf{P}_t\mapsto\widetilde{\mathbf{P}}_t$ is a bijection \(\mathscr{P}_+(K,d)\) onto itself. Thus if
\(G_t^{k,\ell}\) denotes the \((k,\ell)\)-block of
\(\mathbf Q_t^\intercal\mathbf L_{\cdot,t}^\intercal\mathbf L_{\cdot,t}\mathbf Q_t\), then
writing $C_t:=\tr(\Lambda\mathbf L_{\cdot,t}^\intercal\mathbf L_{\cdot,t})$, equation~\eqref{eqn:trace.functional} gives
\begin{equation}
\label{eq:trace-block-expanded-proof}
\Phi_t(\mathbf Q_t\widetilde{\mathbf P}_t\mathbf Q_t^\intercal)
=C_t-
\sum_{k,\ell=1}^K\lambda_k\lambda_\ell
\tr\!\left(G_t^{k,\ell}\widetilde{\mathbf P}_t^{\ell,k}\right).
\end{equation}
Thus minimizing $\Phi_t$ is equivalent to maximizing the sum on the right-hand side of \eqref{eq:trace-block-expanded-proof}.

By Assumption~\ref{cond:uniqueness}, every \(G_t^{k,\ell}\) is symmetric
positive definite.  Moreover, for \(\widetilde{\mathbf{P}}_t\in\mathscr{P}_+(K,d)\), each off-diagonal
block satisfies \(\|\widetilde{\mathbf{P}}_t^{\ell,k}\|_{\mathrm{2\to 2}}\le1\), because
\[
\begin{pmatrix}
I_d&\widetilde{\mathbf{P}}_t^{k,\ell}\\
\widetilde{\mathbf{P}}_t^{\ell,k}&I_d
\end{pmatrix}\succeq0.
\]
For any \(M\in\mathscr S_{++}(d)\) and any \(C\in\bR^{d\times d}\) with
\(\|C\|_{2\to 2}\le1\),
\begin{equation}
\label{eq:trace-contraction-inequality}
\tr(MC)
=\tr\!\left(M\frac{C+C^\intercal}{2}\right)
\le \tr(M),
\end{equation}
and equality holds if and only if \(C=I_d\).  Indeed,
\(I_d-(C+C^\intercal)/2\succeq0\); if the trace in
\eqref{eq:trace-contraction-inequality} is an equality, then
\(M^{1/2}(I_d-(C+C^\intercal)/2)M^{1/2}=0\), hence \((C+C^\intercal)/2=I_d\).  The
contraction bound then forces the skew-symmetric part of \(C\) to vanish.

Applying \eqref{eq:trace-contraction-inequality} term by term to the trace sum in
\eqref{eq:trace-block-expanded-proof}, its unique maximizer is characterized by
\[
\widetilde{\mathbf{P}}_t^{k,\ell}=I_d,
\qquad k,\ell\in[K].
\]
Thus the unique minimizer of \eqref{eqn:trace.functional} is
\begin{equation}
\label{eq:unique-local-P-star}
\mathbf{P}_t^\star
=
\mathbf Q_t
\begin{pmatrix}I_d\\ \vdots\\ I_d\end{pmatrix}
\begin{pmatrix}I_d&\cdots&I_d\end{pmatrix}
\mathbf Q_t^\intercal.
\end{equation}
Hence for each $k,\ell\in[K]$ we have $(\mathbf P_t^\star)^{k,\ell}=Q_t^k(Q_t^\ell)^\intercal$.

Let \(\pi\) be any optimal multicausal coupling and let \(\mathbf P_t(\pi)\) be
the covariance of its stacked time-\(t\) innovations. Lemma~\ref{lem:trace.form.multicausal.cost}
and the uniqueness of every local minimizer give
\[
\sum_{t=1}^T\Phi_t\bigl(\mathbf P_t(\pi)\bigr)
=\sum_{t=1}^T\min_{\mathbf P_t\in\mathscr P_+(K,d)}\Phi_t(\mathbf P_t),
\]
so every nonnegative local gap vanishes and \(\mathbf P_t(\pi)=\mathbf P_t^\star\)
for all \(t\). Consequently,
\[
\E_\pi\bigl\|(Q_t^k)^\intercal\epsilon_t^k
-(Q_t^\ell)^\intercal\epsilon_t^\ell\bigr\|_2^2
=2d-2\tr\!\left((Q_t^k)^\intercal
(\mathbf P_t^\star)^{k,\ell}Q_t^\ell\right)=0.
\]
Hence, with \(\epsilon_t^0:=(Q_t^1)^\intercal\epsilon_t^1\), the coupling satisfies
\[
\epsilon_t^k=Q_t^k\epsilon_t^0,
\qquad k\in[K],\ t\in[T],
\]
almost surely. Since the first marginal is Gaussian white noise,
\(\epsilon_1^0,\ldots,\epsilon_T^0\) are independent standard \(d\)-dimensional
Gaussian vectors. The complete joint law is therefore uniquely determined, and the
Gaussian coupling constructed in Theorem~\ref{thm:gaussian.multicausal.optimizer} shows
that this law is attainable. Thus \(\pi\) is the unique optimal multicausal coupling.
Since every \(Q_t^k\) is orthogonal,
\begin{equation}
\label{eq:strict-regularity-filtration-identity}
\cF_t^k
=\sigma(\epsilon_1^k,\ldots,\epsilon_t^k)
=\sigma(\epsilon_1^0,\ldots,\epsilon_t^0)
=\overline{\cF}_t,
\qquad k\in[K],\ t\in[T],
\end{equation}
up to \(\pi\)-null sets.  The pointwise barycentric projection under $\pi$ is
\[
\widebar{X}=\sum_{k=1}^K\lambda_kX^k
=
\sum_{t=1}^T\sum_{k=1}^K\lambda_kL_{\cdot,t}^kQ_t^k\epsilon^0_t
=
\sum_{t=1}^T\overline L_{\cdot,t}\epsilon^0_t.
\]
Since each \(L_{\cdot,t}^k\) has zero block rows before time \(t\), the same is true of
\( \widebar{L}_{\cdot,t}\); hence \(\widebar{L}\in\mathscr L(T,d)\) and \(\bG^{ \widebar{L}}\in\cFG(T,d)\).  Therefore the filtered process \(\widebar{\bX}:=(\widebar\Omega,\widebar\cF,\pi,\widebar\bF,\widebar{X})\) is an adapted barycenter.  Consider the coupling of \(\widebar{\bX}\) and \(\bG^{\widebar{L}}\) given by the joint law of \((\widebar\omega,\epsilon^0(\widebar\omega))\) under \(\pi\).  Its paths agree, since \(\widebar{X}=\overline L\epsilon^0\), and it is bicausal: by \eqref{eq:strict-regularity-filtration-identity}, the filtration \(\widebar\cF_t\) of \(\widebar{\bX}\) and the filtration \(\sigma(\epsilon_1^0,\ldots,\epsilon_t^0)\) of \(\bG^{\widebar L}\) coincide up to null sets, so that both conditional independence requirements hold trivially.  Therefore \(\AW_2(\widebar{\bX},\bG^{\widebar L})=0\), and \(\bG^{\widebar L}\) is an adapted barycenter.

Now suppose \(\bY\in\FP_2\) is another $\AW_2$-barycenter. For each $k\in[K]$, choose a bicausal coupling of \(\bX^k\) and
\(\bY\) attaining $\AW_2^2(\bX^k,\bY)$, and glue these couplings along $\bY$ to obtain a coupling \(\boldsymbol{\pi}\) of
\((\bX^1,\ldots,\bX^K,\bY)\); by the gluing construction recalled in Section~\ref{sec:multicausal}, the $(\bX^1,\dots,\bX^K)$-marginal \(\pi'\) of $\boldsymbol{\pi}$ is multicausal. Writing $V^\star$ for the optimal barycenter value, the pointwise
barycentric identity gives
\begin{align*}
V^\star=\E_{\boldsymbol{\pi}}\sum_{k=1}^K\lambda_k\|X^k-Y\|_2^2
&= \E_{\boldsymbol{\pi}}\sum_{k=1}^K\lambda_k\|X^k-\widebar{X}\|_2^2+\E_{\boldsymbol{\pi}}\|Y-\widebar X\|_2^2\\
&=\E_{\pi'}[c(X^1,\ldots,X^K)]
+
\E_{\boldsymbol{\pi}}\|Y-\widebar X\|_2^2,
\end{align*}
where $\widebar X=\sum_{k=1}^K\lambda_kX^k$.
Since \(\pi'\) is multicausal, we have $\E_{\pi'}[c(X^1,\ldots,X^K)]\ge V^\star$ by the multicausal reformulation \eqref{eq:mc}, while the second term on the right-hand side is nonnegative. Both terms must therefore be at their optimal values: $\pi'$ is an optimal multicausal coupling, and $Y=\widebar{X}$, $\boldsymbol{\pi}$-almost surely. By uniqueness of the multicausal optimizer established in the first part of the proof, we conclude that $\pi'=\pi$.
It remains to show that the coupling between $\widebar{\bX}$ and
\(\bY\) is bicausal.  Fix $k\in[K]$. Since \(\boldsymbol{\pi}\) was obtained by gluing optimal bicausal
couplings between \(\bX^k\) and \(\bY\), the \((\bX^k,\bY)\)-marginal of \(\boldsymbol{\pi}\) is bicausal.  Thus, for every \(t\in[T]\),
\[
\cF_T^k\perp\!\!\!\perp \cF_t^Y\mid \cF_t^k,
\qquad
\cF_T^Y\perp\!\!\!\perp \cF_t^k\mid \cF_t^Y.
\]
However, by \eqref{eq:strict-regularity-filtration-identity} --- a statement about the $(\bX^1,\ldots,\bX^K)$-marginal $\pi$ of $\boldsymbol\pi$ --- we have $\cF_t^k=\overline{\cF}_t$ up to $\boldsymbol\pi$-null sets.  Therefore the preceding bicausality relations become
\[
\widebar{\cF}_T\perp\!\!\!\perp \cF_t^Y\mid \widebar{\cF}_t,
\qquad
\cF_T^Y\perp\!\!\!\perp \widebar{\cF}_t\mid \cF_t^Y.
\]
Hence $(\bG^{\widebar{L}},\bY)$ are bicausally coupled. Hence $\AW_2(\bY,\bG^{\widebar L})=0$. Since \(\bY\) was arbitrary, \(\bG^{\widebar L}\) is the unique unrestricted
\(\AW_2\)-barycenter in \(\FP_2\).

\end{proof}

\begin{example}
\label{ex:regular-inputs-nonunique-unrestricted}
Let \(T=2\), \(d=1\), \(K=2\), and \(\lambda_1=\lambda_2=1/2\).  Consider 
filtered Gaussian inputs $\bX^1:=\bG^{L^1}$ and $\bX^2:=\bG^{L^2}$ with
\[
L^1=
\begin{pmatrix}
1&0\\
1&1
\end{pmatrix},
\qquad
L^2=
\begin{pmatrix}
1&0\\
-1&1
\end{pmatrix}.
\]
Observe that $L^1,L^2\in\mathscr{L}^{\textup{reg}}(2,1)$. We compute the unrestricted value from the local trace problems in \eqref{eq:unrestricted.value}.  For
\(\rho\in[-1,1]\), set
\[
P(\rho):=
\begin{pmatrix}1&\rho\\ \rho&1\end{pmatrix}\in\mathscr{P}_+(2).
\]
At time \(t=1\),
\[
\mathbf L_{\cdot,1}=
\begin{pmatrix}
1&1\\
1&-1
\end{pmatrix},
\qquad
\mathbf L_{\cdot,1}^\intercal\mathbf L_{\cdot,1}=2I_2.
\]
Thus
\begin{align*}
&\tr\!\left(\Lambda\mathbf L_{\cdot,1}^\intercal\mathbf L_{\cdot,1}\right)
-
\tr\!\left(\Lambda\mathbf L_{\cdot,1}^\intercal\mathbf L_{\cdot,1}\Lambda P(\rho)\right) \\
&\qquad =2-\tr\!\left(\frac12 I_2P(\rho)\right)=1,
\qquad \rho\in[-1,1].
\end{align*}
At time \(t=2\),
\[
\mathbf L_{\cdot,2}=
\begin{pmatrix}
0&0\\
1&1
\end{pmatrix},
\qquad
\mathbf L_{\cdot,2}^\intercal\mathbf L_{\cdot,2}=
\begin{pmatrix}
1&1\\
1&1
\end{pmatrix},
\]
and hence
\begin{align*}
&\tr\!\left(\Lambda\mathbf L_{\cdot,2}^\intercal\mathbf L_{\cdot,2}\right)
-
\tr\!\left(\Lambda\mathbf L_{\cdot,2}^\intercal\mathbf L_{\cdot,2}\Lambda P(\rho)\right) \\
&\qquad =1-\frac14\tr\!\left(
\begin{pmatrix}
1&1\\
1&1
\end{pmatrix}
\begin{pmatrix}
1&\rho\\
\rho&1
\end{pmatrix}
\right)
=\frac{1-\rho}{2}.
\end{align*}
By \eqref{eq:unrestricted.value}, the unrestricted centered value is therefore
\[
\sum_{t=1}^2\inf_{\mathbf{P}_t\in\mathscr{P}_+(2)}\Phi_t(\mathbf{P}_t)=1+0=1,
\]
attained by an arbitrary
\(P_1=P(\rho)\) at time one and by \(P_2=P(1)\) at time two.

Taking \(P_1=P(1)\), \(P_2=P(1)\) gives the Cholesky factor \[
\widebar L^{(+)}=
\begin{pmatrix}
1&0\\
0&1
\end{pmatrix}.
\]
Taking \(P_1=P(-1)\), \(P_2=P(1)\) gives the Cholesky factor
\[
\widebar L^{(-)}=
\begin{pmatrix}
0&0\\
1&1
\end{pmatrix}.
\]
 Since
\(\|L^1\|_F^2=\|L^2\|_F^2=3\) and
\(\|\widebar L^{(+)}\|_F^2=\|\widebar L^{(-)}\|_F^2=2\), while the sums of the absolute
time-diagonal inner products are equal to \(2\),
\[
\AW_2^2(\bG^{0,L^1},\bG^{0,\widebar L^{(+)}})
=\AW_2^2(\bG^{0,L^2},\bG^{0,\widebar L^{(+)}})=1,
\]
and
\[
\AW_2^2(\bG^{0,L^1},\bG^{0,\widebar L^{(-)}})
=\AW_2^2(\bG^{0,L^2},\bG^{0,\widebar L^{(-)}})=1.
\]
Thus, both attain the barycenter value \(1\).  They are not \(\AW_2\)-equivalent:
the first-time marginal of \(\bG^{0,\widebar L^{(+)}}\) is \(\cN(0,1)\), whereas the
first-time marginal of \(\bG^{0,\widebar L^{(-)}}\) is \(\delta_0\).  Hence regularity of
the input filtered Gaussians alone does not imply uniqueness of the unrestricted
\(\FP_2\)-barycenter.

\end{example}

\begin{example}
\label{ex:strict-regularity-not-necessary}
Assumption~\ref{cond:uniqueness} is only sufficient.  Let \(T=2\),
\(d=1\), \(K=3\), and \(\lambda_1=\lambda_2=\lambda_3=1/3\).  Set
\[
L^1=\begin{pmatrix}1&0\\1&1\end{pmatrix},
\qquad
L^2=\begin{pmatrix}1&0\\-1&1\end{pmatrix},
\qquad
L^3=\begin{pmatrix}1&0\\0&1\end{pmatrix}.
\]
Condition \eqref{eq:local-strict-regularity-blocks} fails at time \(1\), since
\((L_{\cdot,1}^1)^\intercal L_{\cdot,1}^2=0\), and in dimension one the orthogonal
matrices \(Q_t^k\) can only change signs.

Nevertheless the unrestricted barycenter is unique.  Write a generic element of
\(\mathscr{P}_+(3)\) as
\[
P=\begin{pmatrix}
1&\rho_{12}&\rho_{13}\\
\rho_{12}&1&\rho_{23}\\
\rho_{13}&\rho_{23}&1
\end{pmatrix}.
\]
At time \(1\), the only nonzero pairwise inner products are
\[
(L_{\cdot,1}^1)^\intercal L_{\cdot,1}^3=1,
\qquad
(L_{\cdot,1}^2)^\intercal L_{\cdot,1}^3=1.
\]
Thus minimizing the time-\(1\) local objective is equivalent to maximizing
\(\rho_{13}+\rho_{23}\).  Since \(P\succeq0\) implies
\(\rho_{13}\le1\) and \(\rho_{23}\le1\), any maximizer must satisfy
\(\rho_{13}=\rho_{23}=1\).  Then
\[
\det P=-(\rho_{12}-1)^2,
\]
so positive semidefiniteness forces \(\rho_{12}=1\).  Hence the unique time-\(1\)
optimizer is the all-ones matrix.  At time \(2\), all three block-columns equal
\((0,1)^\intercal\), so the unique optimizer is again the all-ones matrix.
Although Assumption~\ref{cond:uniqueness} fails, the uniqueness argument in the proof of Theorem~\ref{thm:uniqueness} only uses that each local optimizer \(\mathbf{P}_t^\star\) is unique and of the form \eqref{eq:unique-local-P-star}; both properties hold here (with \(Q_t^k=1\)), so the barycenter is unique.  The barycenter factor is
\[
\widebar L_{\cdot,1}
=\frac13\left(\begin{pmatrix}1\\1\end{pmatrix}
+\begin{pmatrix}1\\-1\end{pmatrix}
+\begin{pmatrix}1\\0\end{pmatrix}\right)
=\begin{pmatrix}1\\0\end{pmatrix},
\qquad
\widebar L_{\cdot,2}=\begin{pmatrix}0\\1\end{pmatrix},
\]
so \(\widebar L=I_2\).  Thus, strict regularity is not necessary for uniqueness.
\end{example}

\begin{remark}[Filtration of the adapted barycenter]
Theorem~\ref{thm:gaussian.multicausal.optimizer} constructs an unrestricted barycenter in the enlarged class \(\cFG_K(T,d)\). It does not imply that the same equivalence class can be represented by a process driven by \(d\)-dimensional innovations. The next example shows that this reduction can fail.
\end{remark}

\subsection{A barycenter outside \texorpdfstring{\(\cFG(T,d)\)}{FG(2,1)}}\label{sec:counter}
The next example is the basic obstruction.  It shows that the enlarged innovation dimension in Theorem~\ref{thm:gaussian.multicausal.optimizer} cannot be removed.

\begin{example}[An unrestricted barycenter outside \(\cFG(2,1)\)]
\label{ex:barycenter.not.in.FG}
Let \(T=2\), \(d=1\), \(K=3\), and \(\lambda_1=\lambda_2=\lambda_3=1/3\).  Consider centered filtered Gaussian processes \(\bX^k=\bG^{L^k}\in\cFG(2,1)\), where
\begin{equation} \label{eq:ex.marginals}
L^1=\begin{pmatrix}1&0\\0&0\end{pmatrix},
\qquad
L^2=\begin{pmatrix}-\frac12&0\\[1mm]\frac{\sqrt3}{2}&0\end{pmatrix},
\qquad
L^3=\begin{pmatrix}-\frac12&0\\[1mm]-\frac{\sqrt3}{2}&0\end{pmatrix}.
\end{equation}
Only the first block-column is nonzero, so that $\Phi_2\equiv0$.  The three vectors \(L_{\cdot,1}^1,L_{\cdot,1}^2,L_{\cdot,1}^3\) are unit vectors in \(\bR^2\) with pairwise inner product \(-1/2\).  For \(\mathbf P_1\in\mathscr{P}_+(3)\),
\[
\Phi_1(\mathbf P_1)=\frac23+\frac19(\mathbf P_1^{1,2}+\mathbf P_1^{1,3}+\mathbf P_1^{2,3}).
\]
Since \(\mathbf P_1\ge0\) and has unit diagonal,
\[
0\le (1,1,1)\mathbf P_1(1,1,1)^\intercal=3+2(\mathbf P_1^{1,2}+\mathbf P_1^{1,3}+\mathbf P_1^{2,3}),
\]
so \(\mathbf P_1^{1,2}+\mathbf P_1^{1,3}+\mathbf P_1^{2,3}\ge-3/2\).  Equality is attained by
\[
\mathbf P_1^\star=\begin{pmatrix}
1&-\frac12&-\frac12\\
-\frac12&1&-\frac12\\
-\frac12&-\frac12&1
\end{pmatrix}.
\]
Therefore
\begin{equation}
\label{eq:example.unrestricted.value}
\inf_{\bY\in\FP_2}\sum_{k=1}^3\frac13\AW_2^2(\bX^k,\bY)=\frac12.
\end{equation}

Next, we consider the restricted problem which minimizes within $\cFG(2, 1)$.  Let \(\bY=\bG^{a,L}\), with
\[
a=\begin{pmatrix}a_1\\a_2\end{pmatrix},
\qquad
L=\begin{pmatrix}x&0\\y&z\end{pmatrix}.
\]
By \eqref{eq:AW2.Gaussian} and \eqref{eq:ABW},
\begin{align*}
\sum_{k=1}^3\frac13\AW_2^2(\bX^k,\bY)
&=a_1^2+a_2^2+1+x^2+y^2+z^2\\
&\quad -\frac23\left(|x|+\left|-\frac12x+\frac{\sqrt3}{2}y\right|+\left|-\frac12x-\frac{\sqrt3}{2}y\right|\right).
\end{align*}
Writing \((x,y)=\rho(\cos\theta,\sin\theta)\), the three absolute values are the
absolute projections of a vector of length \(\rho\) onto three directions separated
by angle \(2\pi/3\).  We claim that their sum is at most \(2\rho\).  Dividing by
\(\rho\) when \(\rho>0\), this is the inequality
\[
|\cos\theta|+|\cos(\theta-2\pi/3)|+|\cos(\theta+2\pi/3)|\le 2.
\]
The left-hand side is invariant under \(\theta\mapsto-\theta\) and under
\(\theta\mapsto\theta+2\pi/3\), so it suffices to consider
\(0\le\theta\le\pi/3\).  If \(0\le\theta\le\pi/6\), the sum equals
\(2\cos\theta\le2\).  If \(\pi/6\le\theta\le\pi/3\), the sum equals
\(\cos\theta+\sqrt3\sin\theta=2\cos(\theta-\pi/3)\le2\).  The case
\(\rho=0\) is immediate.  Hence
\[
\sum_{k=1}^3\frac13\AW_2^2(\bX^k,\bY)
\ge a_1^2+a_2^2+z^2+1+\rho^2-\frac43\rho\ge\frac59.
\]
The lower bound is attained by \(a=0\), \(z=0\), \(x=2/3\), and \(y=0\).  Thus,
\begin{equation}
\label{eq:example.restricted.value}
\inf_{\bY\in\cFG(2,1)}\sum_{k=1}^3\frac13\AW_2^2(\bX^k,\bY)=\frac59.
\end{equation}

{Moreover, equality holds precisely when \(a=0\), \(z=0\),
\(\rho=2/3\), and \((x,y)\) is parallel to one of the vectors
\(L_{\cdot,1}^k\).  Thus, modulo the sign invariance of the Gaussian
factor, the restricted problem has the three minimizers
\(\bG^{\bar L^k},\) where 
\(\bar L^k:=\frac23 L^k\) for $k=1,2,3$. Equivalently, their path covariances are
\[
\bar L^k(\bar L^k)^\intercal
  =\frac49 L^k(L^k)^\intercal.
\]
Hence each restricted barycenter is supported in one of the three input
directions, with standard deviation reduced by a factor of \(2/3\).}
Since \(5/9>1/2\), no unrestricted barycenter admits a representative in \(\cFG(2,1)\). The situation is depicted graphically in Figure \ref{fig:barycenter-weight-sweep} which also shows several other barycenters with different weights.
\end{example}

\begin{remark}[The filtration obstruction]
In Example~\ref{ex:barycenter.not.in.FG}, the optimal matrix \(\mathbf P_1^\star\) has rank \(2\), although \(d=1\).  The covariance of the barycentric projection at time \(1\) is
\[
\mathbf L_{\cdot,1}\Lambda \mathbf P_1^\star\Lambda\mathbf L_{\cdot,1}^\intercal=\frac14I_2.
\]
Thus, any Gaussian linear representative of this unrestricted barycenter requires two independent Gaussian directions at time \(1\), while a process in \(\cFG(2,1)\) has only one fresh scalar Gaussian innovation at that time. In the language of Theorem~\ref{thm:explicit.value}, $\overline\Sigma_1=\frac14I_2$ is a Bures--Wasserstein barycenter of the three rank-one covariances $A_1^1,A_1^2,A_1^3$, and the unrestricted value is $\sum_k\frac13\tr(A_1^k)-\tr(\overline\Sigma_1)=1-\frac12=\frac12$, in accordance with \eqref{eq:example.unrestricted.value}.
\end{remark}

\medskip

\begin{figure}[htbp]
\centering
\begin{tikzpicture}[line cap=round,line join=round]
  \tikzset{
    input one/.style={blue!85!black,line width=0.55pt},
    input two/.style={orange!95!black,line width=0.55pt},
    input three/.style={green!60!black,line width=0.55pt},
    unrestricted/.style={black,line width=1.35pt},
    restricted/.style={black!42,densely dotted,line width=1.15pt},
    coordinate axis/.style={gray!22,line width=0.22pt}
  }

  \newcommand{\baryaxes}{%
    \draw[coordinate axis] (-1.08,0)--(1.08,0);
    \draw[coordinate axis] (0,-1.08)--(0,1.08);
  }
  \newcommand{\baryinputs}{%
    \draw[input one] (-1,0)--(1,0);
    \draw[input two] (-0.5,0.8660)--(0.5,-0.8660);
    \draw[input three] (-0.5,-0.8660)--(0.5,0.8660);
  }
  \newcommand{\weightlabel}[1]{%
    \node[font=\scriptsize,anchor=south] at (0,1.13)
      {$\boldsymbol\lambda=#1$};
  }

  \path[use as bounding box] (-6.60,-1.90) rectangle (6.60,9.35);

  \begin{scope}[x=1cm,y=1cm,yshift=8.10cm]
    \baryaxes
    \baryinputs
    \draw[unrestricted] (-0.5,-0.8660)--(0.5,0.8660);
    \draw[restricted] (-0.5,-0.8660)--(0.5,0.8660);
    \weightlabel{(0,0,1)}
  \end{scope}

  \begin{scope}[x=1cm,y=1cm,xshift=-2.45cm,yshift=5.55cm]
    \baryaxes
    \baryinputs
    \draw[unrestricted] (-0.7000,-0.5196)--(0.7000,0.5196);
    \draw[restricted] (-0.7000,-0.5196)--(0.7000,0.5196);
    \weightlabel{(0.4,0,0.6)}
  \end{scope}

  \begin{scope}[x=1cm,y=1cm,xshift=2.45cm,yshift=5.55cm]
    \baryaxes
    \baryinputs
    \draw[unrestricted] (-0.1000,-0.8660)--(0.1000,0.8660);
    \draw[restricted] (-0.1000,-0.8660)--(0.1000,0.8660);
    \weightlabel{(0,0.4,0.6)}
  \end{scope}

  \begin{scope}[x=1cm,y=1cm,xshift=-4.00cm,yshift=2.85cm]
    \baryaxes
    \fill[white,rotate=30] (0,0)
      ellipse[x radius=0.6708,y radius=0.3000];
    \baryinputs
    \draw[unrestricted,rotate=30] (0,0)
      ellipse[x radius=0.6708,y radius=0.3000];
    \draw[restricted] (-0.5000,-0.5196)--(0.5000,0.5196);
    \draw[restricted] (-0.7000,-0.1732)--(0.7000,0.1732);
    \weightlabel{(0.4,0.2,0.4)}
  \end{scope}

  \begin{scope}[x=1cm,y=1cm,yshift=2.85cm]
    \baryaxes
    \fill[white] (0,0) circle[radius=0.5000];
    \baryinputs
    \draw[unrestricted] (0,0) circle[radius=0.5000];
    \draw[restricted] (-0.6667,0)--(0.6667,0);
    \draw[restricted] (-0.3333,0.5774)--(0.3333,-0.5774);
    \draw[restricted] (-0.3333,-0.5774)--(0.3333,0.5774);
    \weightlabel{(1/3,1/3,1/3)}
  \end{scope}

  \begin{scope}[x=1cm,y=1cm,xshift=4.00cm,yshift=2.85cm]
    \baryaxes
    \fill[white] (0,0)
      ellipse[x radius=0.3000,y radius=0.6708];
    \baryinputs
    \draw[unrestricted] (0,0)
      ellipse[x radius=0.3000,y radius=0.6708];
    \draw[restricted] (-0.2000,-0.6928)--(0.2000,0.6928);
    \draw[restricted] (-0.2000,0.6928)--(0.2000,-0.6928);
    \weightlabel{(0.2,0.4,0.4)}
  \end{scope}

  \begin{scope}[x=1cm,y=1cm,xshift=-5.25cm]
    \baryaxes
    \baryinputs
    \draw[unrestricted] (-1,0)--(1,0);
    \draw[restricted] (-1,0)--(1,0);
    \weightlabel{(1,0,0)}
  \end{scope}

  \begin{scope}[x=1cm,y=1cm,xshift=-1.75cm]
    \baryaxes
    \baryinputs
    \draw[unrestricted] (-0.8000,0.3464)--(0.8000,-0.3464);
    \draw[restricted] (-0.8000,0.3464)--(0.8000,-0.3464);
    \weightlabel{(0.6,0.4,0)}
  \end{scope}

  \begin{scope}[x=1cm,y=1cm,xshift=1.75cm]
    \baryaxes
    \baryinputs
    \draw[unrestricted] (-0.7000,0.5196)--(0.7000,-0.5196);
    \draw[restricted] (-0.7000,0.5196)--(0.7000,-0.5196);
    \weightlabel{(0.4,0.6,0)}
  \end{scope}

  \begin{scope}[x=1cm,y=1cm,xshift=5.25cm]
    \baryaxes
    \baryinputs
    \draw[unrestricted] (-0.5,0.8660)--(0.5,-0.8660);
    \draw[restricted] (-0.5,0.8660)--(0.5,-0.8660);
    \weightlabel{(0,1,0)}
  \end{scope}

  \begin{scope}[x=1cm,y=1cm,yshift=-1.55cm]
    \draw[input one] (-6.10,0)--(-5.55,0);
    \node[anchor=west,font=\scriptsize] at (-5.40,0)
      {$\mathbb X^1$};

    \draw[input two] (-4.35,0)--(-3.80,0);
    \node[anchor=west,font=\scriptsize] at (-3.65,0)
      {$\mathbb X^2$};

    \draw[input three] (-2.60,0)--(-2.05,0);
    \node[anchor=west,font=\scriptsize] at (-1.90,0)
      {$\mathbb X^3$};

    \draw[unrestricted] (-0.60,0)--(0.20,0);
    \node[anchor=west,font=\scriptsize] at (0.35,0)
      {unrestricted};

    \draw[restricted] (3.50,0)--(4.30,0);
    \node[anchor=west,font=\scriptsize] at (4.45,0)
      {restricted};
  \end{scope}
\end{tikzpicture}

\caption{Graphical illustration of Example \ref{ex:barycenter.not.in.FG}. The marginals $L^i$ in \eqref{ex:barycenter.not.in.FG}, which correspond to three unit vectors in $\bR^2$, are represented as line segments with pairwise angle $\pi/2$. When the weight $\lambda$ is close to $(1/3, 1/3, 1/3)$, the unrestricted barycenter requires a higher dimensional driving noise depicted by the ellipses, and hence lies outside $\cFG(2, 1)$.}
\label{fig:barycenter-weight-sweep}
\end{figure}

\section{Restricted Adapted Barycenter}
\label{sec:FG-common-noise}
Theorem~\ref{thm:gaussian.multicausal.optimizer} constructs an unrestricted barycenter as an ensemble filtered Gaussian process, and Example~\ref{ex:barycenter.not.in.FG} shows that the enlarged noise cannot be removed from that Gaussian representation in general. In this section we first characterize exactly when the unrestricted problem admits an ordinary filtered Gaussian barycenter in $\cFG(T,d)$, and we then solve the barycenter problem restricted to $\cFG(T,d)$, together with a fixed-point characterization and a regularity result for its minimizers.

\subsection{A necessary and sufficient condition}
Recall the column covariance matrices \(A_t^k=L_{\cdot,t}^k(L_{\cdot,t}^k)^\intercal\in\mathscr S_+(Td)\) from \eqref{eq:A_t_k}.

\begin{lemma}[Rank-\(d\) matrices in \(\mathscr{P}_+(K,d)\)]
\label{lem:Pk-rank-common-noise}
Let \(\mathbf P\in\mathscr{P}_+(K,d)\).  Then \(\rank(\mathbf P)=d\) if and only if there exist orthogonal matrices \(\mathbf Q^1,\ldots,\mathbf Q^K\in\mathscr O(d)\) such that
\begin{equation}
\label{eq:rankd.common.noise}
\mathbf P^{k,\ell}=\mathbf Q^k(\mathbf Q^\ell)^\intercal,
\qquad k,\ell\in[K].
\end{equation}
\end{lemma}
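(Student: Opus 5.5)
We prove the two implications separately, working with a factorization of \(\mathbf P\) through \(\bR^d\).

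For the \emph{if} direction, assume \eqref{eq:rankd.common.noise} holds. Set \(\mathbf Q:=\bigl((\mathbf Q^1)^\intercal\ \cdots\ (\mathbf Q^K)^\intercal\bigr)^\intercal\in\bR^{Kd\times d}\). Then \(\mathbf P=\mathbf Q\mathbf Q^\intercal\), because the \((k,\ell)\)-block of \(\mathbf Q\mathbf Q^\intercal\) is \(\mathbf Q^k(\mathbf Q^\ell)^\intercal\). Hence \(\rank(\mathbf P)=\rank(\mathbf Q)\le d\). On the other hand, \(\mathbf Q^\intercal\mathbf Q=\sum_k(\mathbf Q^k)^\intercal\mathbf Q^k=KI_d\) has rank \(d\), so \(\rank(\mathbf P)=d\).

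For the \emph{only if} direction, assume \(\rank(\mathbf P)=d\). Since \(\mathbf P\in\mathscr S_+(Kd)\) has rank \(d\), we can factor \(\mathbf P=\mathbf N\mathbf N^\intercal\) with \(\mathbf N\in\bR^{Kd\times d}\), for instance from the spectral decomposition restricted to the \(d\) positive eigenvalues. Write \(\mathbf N^k\in\bR^{d\times d}\) for the \(k\)-th block row. The diagonal constraint \(\mathbf P^{k,k}=I_d\) reads \(\mathbf N^k(\mathbf N^k)^\intercal=I_d\). Since \(\mathbf N^k\) is square, this means \(\mathbf N^k\in\mathscr O(d)\). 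Setting \(\mathbf Q^k:=\mathbf N^k\) gives \(\mathbf P^{k,\ell}=\mathbf N^k(\mathbf N^\ell)^\intercal=\mathbf Q^k(\mathbf Q^\ell)^\intercal\), which is \eqref{eq:rankd.common.noise}.

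The only real point is that the rank-\(d\) hypothesis lets us choose the factor with exactly \(d\) columns. This forces each block row to be square, so the row-orthonormality \(\mathbf N^k(\mathbf N^k)^\intercal=I_d\) upgrades to orthogonality. With a wider factor, as in Lemma~\ref{lem:ensemble}, one only gets co-isometries \(\bR^{Kd}\to\bR^d\), and the conclusion would fail.

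We also note that the representation is unique up to a common right rotation \(\mathbf Q^k\mapsto\mathbf Q^kR\) with \(R\in\mathscr O(d)\). This matches the common-noise interpretation \(\epsilon^k_t=\mathbf Q^k\epsilon^0_t\) used in the proof of Theorem~\ref{thm:uniqueness}.
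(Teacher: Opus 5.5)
Your proof is correct and takes essentially the same approach as the paper. You factor \(\mathbf P=\mathbf N\mathbf N^\intercal\) through \(\bR^d\), so that the diagonal constraint forces each square block row to be orthogonal; conversely, you stack the \(\mathbf Q^k\) to exhibit a rank-\(d\) factor, certifying \(\rank(\mathbf Q)=d\) via \(\mathbf Q^\intercal\mathbf Q=KI_d\) where the paper instead uses invertibility of \(\mathbf Q^1\).
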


\begin{proof}
Since every diagonal block of \(\mathbf P\) is \(I_d\), every \(\mathbf P\in\mathscr{P}_+(K,d)\) has rank at least \(d\).  Suppose first that \(\rank(\mathbf P)=d\).  Choose a factorization \(\mathbf P=\mathbf R\mathbf R^\intercal\) with \(\mathbf R\in\bR^{Kd\times d}\), and write \(R^k\in\bR^{d\times d}\) for the \(k\)-th block row of \(\mathbf R\).  The diagonal block condition gives
\[
R^k(R^k)^\intercal=\mathbf P^{k,k}=I_d.
\]
Thus, each \(R^k\) is orthogonal, and \eqref{eq:rankd.common.noise} holds with \(\mathbf Q^k=R^k\).  Conversely, if \eqref{eq:rankd.common.noise} holds, then \(\mathbf P=\mathbf R\mathbf R^\intercal\), where \(\mathbf R\) has block rows \(\mathbf Q^1,\ldots,\mathbf Q^K\).  Since \(\mathbf Q^1\) is invertible, \(\rank(\mathbf R)=d\), and hence \(\rank(\mathbf P)=d\).
\end{proof}

\begin{condition}[Common-noise input condition]
\label{cond:common-noise.input}
The input factors \(L^1,\ldots,L^K\) and weights \(\lambda_1,\ldots,\lambda_K\) are said to satisfy the common-noise input condition if, for every \(t\in[T]\),
\begin{equation}
\label{eq:common-noise.input.condition}
\min_{\mathbf P_t\in\mathscr{P}_+(K,d)}\Phi_t(\mathbf P_t)
=
\min_{\substack{\mathbf P_t\in\mathscr{P}_+(K,d)\\ \rank(\mathbf P_t)=d}}\Phi_t(\mathbf P_t).
\end{equation}
Equivalently, each local problem admits an optimizer of rank \(d\).  This condition depends only on the matrices \(L_{\cdot,t}^k\) and the weights \(\lambda_k\), through the trace functional \(\Phi_t\).
\end{condition}

\begin{theorem}[Common-noise criterion for filtered Gaussian barycenters]
\label{thm:FG-common-noise-criterion}
The following statements are equivalent:
\begin{enumerate}
\item[(i)] The common-noise input condition \eqref{eq:common-noise.input.condition} holds.
\item[(ii)] For every \(t\in[T]\), there exist orthogonal matrices \(\mathbf Q_t^1,\ldots,\mathbf Q_t^K\in\mathscr O(d)\) such that the matrix
\begin{equation}
\label{eq:common-noise.optimizer.matrix}
(\mathbf P_t^\star)^{k,\ell}=\mathbf Q_t^k(\mathbf Q_t^\ell)^\intercal,
\qquad k,\ell\in[K],
\end{equation}
minimizes \(\Phi_t\) over \(\mathscr{P}_+(K,d)\).
\item[(iii)] There exists an unrestricted barycenter \(\overline{\bX}\in\cFG_K(T,d)\) constructed in Theorem~\ref{thm:gaussian.multicausal.optimizer} which is \(\AW_2\)-equivalent to an element of \(\cFG(T,d)\).
\item[(iv)] The unrestricted adapted barycenter problem admits a barycenter representative in \(\cFG(T,d)\).
\end{enumerate}
When these equivalent conditions hold, the matrix \(\mathbf P^\star=\diag(\mathbf P_1^\star,\ldots,\mathbf P_T^\star)\in\mathscr{P}_+(K,T,d)\) minimizes \(\Phi\), and
$
\bG^{\widebar a,\widebar L}\in \cFG(T,d)
$
is an $\FP_2$-barycenter, where $\widebar{L}\in\mathscr{L}(T,d)$ is defined by
\[
\widebar L_{\cdot,t}=\sum_{k=1}^K\lambda_kL_{\cdot,t}^k\mathbf Q_t^k,
\quad t\in[T].
\]
Moreover, if $\widebar{\bX}:=\bG_K^{\widebar a,\widebar{\mathbf L}}$ denotes the barycenter representative built from the optimizers $\mathbf P_t^\star$ in Theorem~\ref{thm:gaussian.multicausal.optimizer}, then
\begin{equation}
\label{eq:AW-zero-extended-common-noise}
\AW_2(\widebar{\bX},\bG^{\widebar{a},\widebar{L}})=0.
\end{equation}
\end{theorem}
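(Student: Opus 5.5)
We will prove the cycle (i)$\Rightarrow$(ii)$\Rightarrow$(iii)$\Rightarrow$(iv)$\Rightarrow$(i), and obtain the final claims along the way. The step (i)$\Leftrightarrow$(ii) is immediate from Lemma~\ref{lem:Pk-rank-common-noise}. Indeed, by Condition~\ref{cond:common-noise.input} each local problem has a rank-$d$ optimizer $\mathbf P_t^\star$, and the lemma writes its blocks as $\mathbf Q_t^k(\mathbf Q_t^\ell)^\intercal$. Conversely, a matrix of the form \eqref{eq:common-noise.optimizer.matrix} has rank $d$. By Theorem~\ref{thm:gaussian.multicausal.optimizer}, $\mathbf P^\star=\diag(\mathbf P_1^\star,\ldots,\mathbf P_T^\star)$ then minimizes $\Phi$.

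For (ii)$\Rightarrow$(iii) and the explicit representative, factor $\mathbf P_t^\star=\mathbf R_t\mathbf R_t^\intercal$ with $\mathbf R_t\in\bR^{Kd\times d}$ having block rows $\mathbf Q_t^k$. We then mimic the coupling argument of Theorem~\ref{thm:uniqueness}. Take a single standard white noise $\epsilon^0$ and set $\epsilon_t^k:=\mathbf Q_t^k\epsilon_t^0$. This is a Gaussian coupling with one-step covariance $\mathbf P_t^\star$, independent across time, so it is multicausal by Lemma~\ref{thm:gaussian.mc.characterization} and optimal by Theorem~\ref{thm:gaussian.multicausal.optimizer}. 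Since each $\mathbf Q_t^k$ is orthogonal, $\sigma(\epsilon_{1:t}^k)=\sigma(\epsilon_{1:t}^0)$ for all $k$. Hence the product filtration coincides with $\bF^{\epsilon^0}$ up to null sets, exactly as in \eqref{eq:strict-regularity-filtration-identity}. The barycentric projection is
\[
\widebar X=\widebar a+\sum_t\sum_k\lambda_kL^k_{\cdot,t}\mathbf Q^k_t\epsilon^0_t=\widebar a+\widebar L\epsilon^0 .
\]
Because $L^k_{\cdot,t}$ vanishes above row block $t$, we have $\widebar L\in\mathscr L(T,d)$. The identity coupling with $\bG^{\widebar a,\widebar L}$ is therefore bicausal with zero cost. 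For \eqref{eq:AW-zero-extended-common-noise} we additionally use the column-covariance formula of Remark~\ref{rmk:AW2.ensemble}. The ensemble factor of Theorem~\ref{thm:gaussian.multicausal.optimizer} satisfies $\widebar{\mathbf L}_{\cdot,t}\widebar{\mathbf L}_{\cdot,t}^\intercal=\mathbf L_{\cdot,t}\Lambda\mathbf P_t^\star\Lambda\mathbf L_{\cdot,t}^\intercal=\widebar L_{\cdot,t}\widebar L_{\cdot,t}^\intercal$. So all column covariances agree, and the distance vanishes. This gives (iii), and (iii)$\Rightarrow$(iv) is trivial.

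The main obstacle is (iv)$\Rightarrow$(i): we must show that an $\cFG(T,d)$ barycenter forces rank-$d$ local optimizers. Let $\bY=\bG^{b,M}$ be a barycenter; by the mean separation we may take $b=\widebar a$ and center everything. We first try the cheap route through the decomposition into Bures--Wasserstein problems. By Proposition~\ref{prop:decomp}, extended via Remark~\ref{rmk:AW2.ensemble}, the objective at $\bY$ equals
\[
\sum_t\sum_k\lambda_k\mathrm{dist}^2_{\BW}(A^k_t,A^M_t)\ \ge\ \sum_t\min_\Sigma\sum_k\lambda_k\mathrm{dist}_{\BW}^2(A^k_t,\Sigma).
\]
Optimality of $\bY$ together with Theorem~\ref{thm:explicit.value} then forces each $A^M_t=M_{\cdot,t}M_{\cdot,t}^\intercal$ to be a BW barycenter of the $A^k_t$. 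Next we apply Step~2 of the proof of Theorem~\ref{thm:explicit.value} with $\Sigma=A_t^M$. It produces $\mathbf P_t=(\mathbf M^k(\mathbf M^\ell)^\intercal)_{k,\ell}$, built from the co-isometries $\mathbf M^k\in\bR^{d\times Td}$, and this $\mathbf P_t$ is a local minimizer by \eqref{eq:step2.master.identity}.

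The remaining delicate point is the rank. The $\mathbf M^k=U^k(V^k)^\intercal$ come from SVDs of $(L^k_{\cdot,t})^\intercal\Sigma^{1/2}$, and $\Sigma^{1/2}$ has rank at most $d$. We therefore choose all $V^k$ inside one fixed $d$-dimensional subspace $W\supseteq\Image(\Sigma^{1/2})$, padding degenerate directions within $W$ rather than arbitrarily. Fixing an orthonormal basis $V\in\bR^{Td\times d}$ of $W$, each $\mathbf M^k=O^kV^\intercal$ with $O^k\in\mathscr O(d)$. Then $\mathbf P_t^{k,\ell}=O^k(O^\ell)^\intercal$, which has rank $d$ by Lemma~\ref{lem:Pk-rank-common-noise}, giving (i). The thing to check here is that the SVD step in Step~2 still attains $\langle\mathbf M^k,(L^k_{\cdot,t})^\intercal\Sigma^{1/2}\rangle=\|(L^k_{\cdot,t})^\intercal\Sigma^{1/2}\|_*$ under this constrained choice. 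It does, since the right singular vectors for nonzero singular values already lie in $\Image(\Sigma^{1/2})\subseteq W$.
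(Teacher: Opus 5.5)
Your argument is correct, but both nontrivial implications take a different route from the paper's.

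For (ii)$\Rightarrow$(iii), the paper starts from the ensemble representative built with the symmetric root $(\mathbf P_t^\star)^{1/2}=\sqrt K\,\mathbf U_t\mathbf U_t^\intercal$, where $\mathbf U_t=\mathbf R_t/\sqrt K$. It projects the $Kd$-dimensional noise to $\eps_t=\mathbf U_t^\intercal\beps_t$ and checks bicausality via the independent splitting $\beps_t=\mathbf U_t\eps_t+(I_{Kd}-\mathbf U_t\mathbf U_t^\intercal)\beps_t$. This yields \eqref{eq:AW-zero-extended-common-noise} without appealing to Remark~\ref{rmk:AW2.ensemble}. You instead build the common-noise multicausal coupling $\epsilon_t^k=\mathbf Q_t^k\epsilon_t^0$, as in Theorem~\ref{thm:uniqueness}. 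This makes it transparent why $\bG^{\widebar a,\widebar L}$ is a barycenter, since all filtrations collapse to $\bF^{\epsilon^0}$. But you then rely on the column-covariance formula of Remark~\ref{rmk:AW2.ensemble}, which the paper only asserts, to get \eqref{eq:AW-zero-extended-common-noise}. You could avoid it for the symmetric-root representative. Your coupling is exactly $\cN(0,\mathbf P^\star)$, so Lemma~\ref{lem:ensemble} identifies its barycentric projection with that representative, and the triangle inequality finishes.

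For (iv)$\Rightarrow$(i), the paper aligns each $L^k_{\cdot,t}$ directly to the $d$-column factor $M_{\cdot,t}$ by $d\times d$ orthogonal Procrustes matrices $\mathbf Q_t^k$. Completing the square gives $\Phi_t(\mathbf P_t)+\|M_{\cdot,t}-\widebar M_{\cdot,t}\|_F^2$ with $\mathbf P_t^{k,\ell}=\mathbf Q_t^k(\mathbf Q_t^\ell)^\intercal$, which has rank $d$ automatically. Only the value formula \eqref{eq:unrestricted.value} is needed. Your route goes through Theorem~\ref{thm:explicit.value} and reuses its Step~2 with the $Td\times Td$ root $\Sigma^{1/2}$. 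That is why you need the extra step of completing all SVDs inside one $d$-dimensional subspace $W\supseteq\Image(\Sigma^{1/2})$. Your justification is sound: right singular vectors for nonzero singular values lie in the row space of $\Sigma^{1/2}$. In return you get the additional fact that each $M_{\cdot,t}M_{\cdot,t}^\intercal$ is a local Bures--Wasserstein barycenter.
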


\begin{proof}
The equivalence of (i) and (ii) follows from Lemma~\ref{lem:Pk-rank-common-noise}: every rank-\(d\) element of \(\mathscr{P}_+(K,d)\) is exactly a common-noise matrix of the form \eqref{eq:common-noise.optimizer.matrix}, and conversely every such matrix has rank \(d\).

Assume (ii), and let \(\widebar{\bX}=\bG_K^{\widebar a,\widebar{\mathbf L}}\) be the enlarged-noise barycenter representative built from the optimizers \(\mathbf P_t^\star\) in Theorem~\ref{thm:gaussian.multicausal.optimizer}.  We prove \eqref{eq:AW-zero-extended-common-noise}, which establishes (iii).  Fix \(t\), and set
\[
\mathbf R_t:=\begin{pmatrix}\mathbf Q_t^1\\ \vdots\\ \mathbf Q_t^K\end{pmatrix}\in\bR^{Kd\times d},
\qquad
\mathbf U_t:=\frac{1}{\sqrt{K}}\mathbf R_t.
\]
Then \(\mathbf U_t^\intercal \mathbf U_t=I_d\), \(\mathbf P_t^\star=\mathbf R_t\mathbf R_t^\intercal=K\mathbf U_t\mathbf U_t^\intercal\), and therefore \((\mathbf P_t^\star)^{1/2}=\sqrt{K}\mathbf U_t\mathbf U_t^\intercal\).  Let \((\beps_t)_{t=1}^T\) be the canonical \(Kd\)-dimensional white noise of \(\widebar{\bX}\), and couple it to the \(d\)-dimensional white noise by
\[
\eps_t:=\mathbf U_t^\intercal\beps_t,
\qquad t\in[T].
\]
Then \((\eps_t)_{t=1}^T\) is a standard \(d\)-dimensional Gaussian white noise.  Using \eqref{eq:extended.barycenter.factor}, we obtain pathwise
\begin{align*}
\widebar{\mathbf L}_{\cdot,t}\beps_t
&=\mathbf L_{\cdot,t}\Lambda(\mathbf P_t^\star)^{1/2}\beps_t \\
&=\mathbf L_{\cdot,t}\Lambda K^{1/2}\mathbf U_t\mathbf U_t^\intercal\beps_t
=\mathbf L_{\cdot,t}\Lambda\mathbf R_t\eps_t
=\overline L_{\cdot,t}\eps_t.
\end{align*}
Hence the two candidate barycenter paths are equal under this coupling. We verify bicausality of the zero-cost coupling.  Let \(\mathcal F_t^{\beps} :=\sigma(\beps_1,\ldots,\beps_t)\) and \(\mathcal F_t^{\eps}:=\sigma(\eps_1,\ldots,\eps_t)\).  Since \(\eps_s\) is a function of \(\beps_s\), the causal direction from the enlarged filtration to the reduced filtration is immediate.  For the reverse direction, write
\[
\beps_s=\mathbf U_s\eps_s+(I_{Kd}-\mathbf U_s\mathbf U_s^\intercal)\beps_s.
\]
For each \(s\), the two terms on the right are independent centered Gaussian vectors, and the decompositions are independent over different times.  Consequently, conditional on \(\mathcal F_t^\eps\), the additional information in \(\mathcal F_t^{\beps}\) is generated by orthogonal components up to time \(t\), which are independent of \(\eps_{t+1},\ldots,\eps_T\).  Therefore \(\mathcal F_T^\eps\) is conditionally independent of \(\mathcal F_t^{\beps}\) given \(\mathcal F_t^\eps\).  The coupling is bicausal, and since the two paths are equal, its cost is zero.  This proves \eqref{eq:AW-zero-extended-common-noise}.  In particular, \(\overline L\in\mathscr L(T,d)\) since each \(L^k_{\cdot,t}\) has zero block rows before time \(t\), and \(\bG^{\overline a,\overline L}\in\cFG(T,d)\) is an unrestricted barycenter representative.  This establishes (iii).

The implication (iii) \(\Rightarrow\) (iv) is immediate.  It remains to prove necessity.  Assume (iv), and let \(\bG^{b,M}\in\cFG(T,d)\) be an unrestricted barycenter representative.  By the mean contribution in \eqref{eq:unrestricted.value}, necessarily \(b=\widebar a\).  By the distance formula \eqref{eq:AW2.Gaussian} together with Proposition~\ref{prop:decomp} and \eqref{eqn:d.BW}, its objective value is
\begin{equation}
\label{eq:common.noise.necessity.objective}
\sum_{k=1}^K\lambda_k\|a^k-\widebar a\|_2^2
+
\sum_{t=1}^T
\sum_{k=1}^K\lambda_k
\mathrm{dist}_{\BW}^2(A_t^k,M_{\cdot,t}M_{\cdot,t}^\intercal).
\end{equation}
Fix \(t\).  For each \(k\), choose an orthogonal Procrustes optimizer \(\mathbf Q_t^k\in\mathscr O(d)\) such that
\[
\mathrm{dist}_{\BW}^2(A_t^k,M_{\cdot,t}M_{\cdot,t}^\intercal)
=
\|L_{\cdot,t}^k\mathbf Q_t^k-M_{\cdot,t}\|_F^2.
\]
Set
\[
\widebar{M}_{\cdot,t}:=\sum_{k=1}^K\lambda_kL_{\cdot,t}^k\mathbf Q_t^k,
\qquad
\mathbf P_t^{k,\ell}:=\mathbf Q_t^k(\mathbf Q_t^\ell)^\intercal.
\]
Then \(\mathbf P_t\in\mathscr{P}_+(K,d)\) has rank \(d\), and completing the square gives
\begin{align*}
\sum_{k=1}^K\lambda_k
\mathrm{dist}_{\BW}^2(A_t^k,M_{\cdot,t}M_{\cdot,t}^\intercal) 
&\quad =
\sum_{k=1}^K\lambda_k\|L_{\cdot,t}^k\mathbf Q_t^k-M_{\cdot,t}\|_F^2 \\
&\quad =
\sum_{k=1}^K\lambda_k\|L_{\cdot,t}^k\mathbf Q_t^k-\widebar{M}_{\cdot,t}\|_F^2
+
\|M_{\cdot,t}-\widebar{M}_{\cdot,t}\|_F^2 \\
&\quad =
\Phi_t(\mathbf P_t)+\|M_{\cdot,t}-\widebar{M}_{\cdot,t}\|_F^2
\ge
\min_{\widetilde{\mathbf P}_t\in\mathscr{P}_+(K,d)}\Phi_t(\widetilde{\mathbf P}_t).
\end{align*}
The equality with \(\Phi_t(\mathbf P_t)\) is precisely the trace expansion \eqref{eq:Phi_t.expanded}.  Summing the last inequality over \(t\), and comparing with the unrestricted value formula \eqref{eq:unrestricted.value}, shows that equality must hold at every time.  Indeed, the total objective of \(\bG^{\widebar a,M}\) equals the unrestricted barycenter value, while every displayed local gap is nonnegative.  Hence each \(\mathbf P_t\) is a rank-\(d\) minimizer of \(\Phi_t\) over \(\mathscr{P}_+(K,d)\).  Thus, the common-noise input condition holds, proving (i).
\end{proof}

\begin{lemma}[Regularity and common noise]
\label{lem:strict-regularity-implies-common-noise}
Suppose that $L^1,\ldots,L^K$ satisfy Assumption~\ref{cond:uniqueness}, with orthogonal matrices $\mathbf Q_t^k$ as therein.  Then, for every $t\in[T]$, the unique minimizer of $\Phi_t$ over $\mathscr{P}_+(K,d)$ is the matrix $\mathbf P_t^\star$ of \eqref{eq:unique-local-P-star}, with blocks $(\mathbf P_t^\star)^{k,\ell}=\mathbf Q_t^k(\mathbf Q_t^\ell)^\intercal$, and $\rank(\mathbf P_t^\star)=d$.  In particular, Condition~\ref{cond:common-noise.input} holds, and the unique barycenter $\bG^{\widebar L}$ of Theorem~\ref{thm:uniqueness} is the  representative in $\cFG(T, d)$ provided by Theorem~\ref{thm:FG-common-noise-criterion}.
\end{lemma}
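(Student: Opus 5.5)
The uniqueness of the local minimizer is already contained in the proof of Theorem~\ref{thm:uniqueness}, so I would first extract that part as a standalone statement. Fix $t\in[T]$ and let $\mathbf Q_t=\diag(\mathbf Q_t^1,\ldots,\mathbf Q_t^K)$ be as in Assumption~\ref{cond:uniqueness}. Conjugating by $\mathbf Q_t$ is a bijection of $\mathscr P_+(K,d)$ onto itself. Under this change of variables, minimizing $\Phi_t$ becomes maximizing $\sum_{k,\ell}\lambda_k\lambda_\ell\tr(G_t^{k,\ell}\widetilde{\mathbf P}_t^{\ell,k})$, where every $G_t^{k,\ell}$ is symmetric positive definite and every off-diagonal block of $\widetilde{\mathbf P}_t$ is a contraction.

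The trace--contraction inequality \eqref{eq:trace-contraction-inequality} and its equality case show that each summand is maximized only at $\widetilde{\mathbf P}_t^{\ell,k}=I_d$. The all-$I_d$ block matrix lies in $\mathscr P_+(K,d)$, since it equals $\mathbf 1\mathbf 1^\intercal\otimes I_d$. Hence it is the unique maximizer, and undoing the change of variables gives the unique minimizer $\mathbf P_t^\star$ of \eqref{eq:unique-local-P-star}, with blocks $\mathbf Q_t^k(\mathbf Q_t^\ell)^\intercal$. I will cite that part of the earlier proof rather than repeat it.

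For the rank, $\mathbf P_t^\star=\mathbf R_t\mathbf R_t^\intercal$, where $\mathbf R_t$ has block rows $\mathbf Q_t^1,\ldots,\mathbf Q_t^K$. Lemma~\ref{lem:Pk-rank-common-noise} then gives $\rank(\mathbf P_t^\star)=d$. Since the unconstrained minimum of $\Phi_t$ is attained at a rank-$d$ matrix, the two minima in \eqref{eq:common-noise.input.condition} coincide, so Condition~\ref{cond:common-noise.input} holds. Condition (ii) of Theorem~\ref{thm:FG-common-noise-criterion} holds as well, with exactly these orthogonal matrices $\mathbf Q_t^k$.

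For the final identification, Theorem~\ref{thm:FG-common-noise-criterion} applied with these $\mathbf Q_t^k$ (and centered inputs, so $\widebar a=0$) produces the representative $\bG^{\widebar L}$ with $\widebar L_{\cdot,t}=\sum_k\lambda_kL^k_{\cdot,t}\mathbf Q_t^k$. This is literally the factor \eqref{eq:strict-regularity-barycenter-factor} of Theorem~\ref{thm:uniqueness}, so the two processes coincide.

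The only subtle point is a possible mismatch in the choice of orthogonal matrices. Theorem~\ref{thm:FG-common-noise-criterion} could be fed a different factorization of $\mathbf P_t^\star$, namely $\mathbf Q_t^kO_t$ for some $O_t\in\mathscr O(d)$. That choice changes $\widebar L$ to $\widebar L\diag(O_1,\ldots,O_T)$, which is $\AW_2$-equivalent to $\widebar L$ by the remark after \eqref{eq:Procrustes}. It also agrees with the uniqueness already established in $\FP_2$, so the identification holds in any case.
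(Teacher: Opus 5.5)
Your proposal is correct and follows the paper's proof essentially step for step. You cite the first part of the proof of Theorem~\ref{thm:uniqueness} for uniqueness of $\mathbf P_t^\star$, get the rank from Lemma~\ref{lem:Pk-rank-common-noise}, deduce Condition~\ref{cond:common-noise.input}, and apply Theorem~\ref{thm:FG-common-noise-criterion} with the same $\mathbf Q_t^k$ to recover the factor \eqref{eq:strict-regularity-barycenter-factor}. Your closing remark adds a harmless bit of care the paper omits: any other factorization of $\mathbf P_t^\star$ has block rows $\mathbf Q_t^kO_t$, which only changes $\widebar L$ by a right factor in $\mathscr O(T,d)$.
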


\begin{proof}
That $\mathbf P_t^\star$ is the unique minimizer of $\Phi_t$ over $\mathscr{P}_+(K,d)$, with the stated blocks, was established in the first part of the proof of Theorem~\ref{thm:uniqueness}.  By Lemma~\ref{lem:Pk-rank-common-noise}, a matrix with blocks $\mathbf Q_t^k(\mathbf Q_t^\ell)^\intercal$ has rank $d$.  Hence every local problem admits a rank-$d$ minimizer, which is Condition~\ref{cond:common-noise.input}, and Theorem~\ref{thm:FG-common-noise-criterion} applies with these $\mathbf Q_t^k$: the resulting representative has mean zero and factor $\widebar L_{\cdot,t}=\sum_{k=1}^K\lambda_kL^k_{\cdot,t}\mathbf Q_t^k$, which is exactly \eqref{eq:strict-regularity-barycenter-factor}.
\end{proof}

\begin{remark}[Closed form under compatible orientations]
\label{rem:compatible-orientation-closed-form}
The argument in the proof of Theorem~\ref{thm:uniqueness} also gives a useful local closed form under a slightly weaker hypothesis.  Fix \(t\), and suppose there exist \(\mathbf Q_t^1,\ldots,\mathbf Q_t^K\in\mathscr O(d)\) such that
\[
(\mathbf Q_t^k)^\intercal(L_{\cdot,t}^k)^\intercal L_{\cdot,t}^\ell \mathbf Q_t^\ell\succeq0,
\qquad k,\ell\in[K].
\]
Then \eqref{eq:trace-contraction-inequality} shows that
\[
\mathbf P_t^\star=\bigl(\mathbf Q_t^k(\mathbf Q_t^\ell)^\intercal\bigr)_{k,\ell=1}^K
\]
minimizes \(\Phi_t\) over \(\mathscr{P}_+(K,d)\), and
\[
\min_{\mathbf P_t\in\mathscr{P}_+(K,d)}\Phi_t(\mathbf P_t)
=\sum_{k=1}^K\lambda_k\|L_{\cdot,t}^k\|_F^2
-\left\|\sum_{k=1}^K\lambda_kL_{\cdot,t}^k\mathbf Q_t^k\right\|_F^2 .
\]
In particular, Assumption~\ref{cond:uniqueness} implies this condition at every time, with the optimizer unique.
\end{remark}

\subsection{Barycentric interpolation}
\label{sec:two.point}
When \(K=2\), the common-noise input condition holds automatically.  Indeed, the local problem in \eqref{eq:unrestricted.value} amounts, by \eqref{eq:Phi_t.expanded}, to maximizing \(\tr(AC)\) over the off-diagonal block \(C=\mathbf P_t^{2,1}\), which ranges over all contractions, where \(A=(L^1_{\cdot,t})^\intercal L^2_{\cdot,t}\).  The maximum \(\|A\|_*\) is attained at an orthogonal matrix \(C=\mathbf Q\) obtained by extending the partial isometry in the singular value decomposition of \(A\), and the resulting optimizer
\[
\mathbf P_t=\begin{pmatrix} I_d & \mathbf Q^\intercal\\ \mathbf Q & I_d\end{pmatrix}
\]
has rank \(d\).  Thus the enlarged innovation dimension of Theorem~\ref{thm:gaussian.multicausal.optimizer} is not needed for two marginals.  The two-point barycenter is therefore an adapted analogue of displacement interpolation and of constant-speed geodesics in Wasserstein space \cite{AGS08,V08}.  The existence of the filtered Gaussian geodesic used here is a consequence of the Procrustes representation \eqref{eq:Procrustes}.

Let
\[
\bX_0=\bG^{a_0,L_0},
\qquad
\bX_1=\bG^{a_1,L_1}
\]
be elements of \(\cFG(T,d)\).  Fix \(u\in[0,1]\) and \(Q\in\mathscr Q(L_0,L_1)\).  Define
\begin{equation}
\label{eq:two.point.interpolation}
\hat a_u:=(1-u)a_0+ua_1,
\qquad
\hat L_u:=(1-u)L_0+uL_1Q.
\end{equation}

\begin{theorem}[Barycentric interpolation]
\label{thm:two.point.barycenter.filtered.gaussian}
One has
\begin{equation}
\label{eq:two.point.value.filtered.gaussian}
\inf_{\bZ\in\cFG}
\left\{(1-u)\AW_2^2(\bX_0,\bZ)+u\AW_2^2(\bX_1,\bZ)\right\}
=u(1-u)\AW_2^2(\bX_0,\bX_1),
\end{equation}
and $\bG^{\hat a_u,\hat L_u}$ is a minimizer.  Conversely, if $\bZ=\bG^{b,M}$ is a minimizer, then $b=\hat a_u$ and there exist $Q\in\mathscr Q(L_0,L_1)$ and $R\in\mathscr O(T,d)$ such that
\begin{equation}
\label{eq:minimizer.factor.characterization}
M=((1-u)L_0+uL_1Q)R.
\end{equation}
Thus, the set of two-point barycenters in $\FG$ is the collection of equivalence classes generated by \eqref{eq:two.point.interpolation} as $Q$ ranges over $\mathscr Q(L_0,L_1)$.
\end{theorem}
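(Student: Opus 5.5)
The plan is to reduce everything to the Procrustes form \eqref{eq:Procrustes} and a pointwise parallelogram identity. First I separate the means: by \eqref{eq:AW2.Gaussian}, for $\bZ=\bG^{b,M}$ the objective equals
\[
(1-u)\|a_0-b\|_2^2+u\|a_1-b\|_2^2+(1-u)\mathrm{dist}_{\mathrm{ABW}}^2(L_0,M)+u\,\mathrm{dist}_{\mathrm{ABW}}^2(L_1,M).
\]
The mean part is a strictly convex quadratic in $b$. Its unique minimizer is $b=\hat a_u$, and the minimum value is $u(1-u)\|a_0-a_1\|_2^2$. It therefore suffices to treat the factor part and to show that its infimum is $u(1-u)\,\mathrm{dist}_{\mathrm{ABW}}^2(L_0,L_1)$.

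For the lower bound, fix $M\in\mathscr L(T,d)$ and choose $R_0\in\mathscr Q(M,L_0)$ and $R_1\in\mathscr Q(M,L_1)$, so that $\mathrm{dist}_{\mathrm{ABW}}(L_j,M)=\|M-L_jR_j\|_F$. I then use the Hilbert-space identity
\[
(1-u)\|x-M\|^2+u\|y-M\|^2=u(1-u)\|x-y\|^2+\|M-((1-u)x+uy)\|^2
\]
with $x=L_0R_0$ and $y=L_1R_1$. Because $\mathscr O(T,d)$ is a group, $\|L_0R_0-L_1R_1\|_F=\|L_0-L_1R_1R_0^\intercal\|_F\ge \mathrm{dist}_{\mathrm{ABW}}(L_0,L_1)$. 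This gives the lower bound.

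For attainment, take $M=\hat L_u$. Because $Q$ is block diagonal, $L_1Q$ is block lower triangular, so $\hat L_u\in\mathscr L(T,d)$ and $\bG^{\hat a_u,\hat L_u}\in\cFG$. Using the trivial alignments ($I$, respectively $Q$) as upper bounds for the Procrustes distances gives
\[
\mathrm{dist}_{\mathrm{ABW}}(L_0,\hat L_u)\le u\|L_0-L_1Q\|_F,
\qquad
\mathrm{dist}_{\mathrm{ABW}}(L_1,\hat L_u)\le (1-u)\|L_0-L_1Q\|_F.
\]
Hence the factor objective is at most $((1-u)u^2+u(1-u)^2)\,\mathrm{dist}_{\mathrm{ABW}}^2=u(1-u)\,\mathrm{dist}_{\mathrm{ABW}}^2$. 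Together with the means this proves \eqref{eq:two.point.value.filtered.gaussian}. Here $\AW_2$ for general filtered Gaussians is obtained by adding the mean term.

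For the converse, let $\bG^{b,M}$ be a minimizer. Then $b=\hat a_u$ by strict convexity. In the lower-bound chain both inequalities must be equalities, so two things follow. First, $Q':=R_1R_0^\intercal\in\mathscr Q(L_0,L_1)$. Second,
\[
M=(1-u)L_0R_0+uL_1R_1=\bigl((1-u)L_0+uL_1Q'\bigr)R_0,
\]
which is \eqref{eq:minimizer.factor.characterization} with $R=R_0$. Conversely, right multiplication by $R\in\mathscr O(T,d)$ does not change the $\AW_2$-class (distance zero, by the characterization from \cite{gunasingam2026adapted}). So the minimizers are exactly the classes generated by \eqref{eq:two.point.interpolation} as $Q$ ranges over $\mathscr Q(L_0,L_1)$.

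The main obstacle is the equality case. It only needs care to notice that Procrustes optimizers for $M$ exist, since the minimum over the compact set $\mathscr O(T,d)$ is attained, and that composing the two optimal alignments stays in $\mathscr O(T,d)$. Neither point is delicate.
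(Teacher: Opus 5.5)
Your proof is correct and follows essentially the paper's route: the Procrustes form \eqref{eq:Procrustes}, a completed square, and an equality case that forces an optimal composite alignment and $M$ equal to the aligned average. The paper aligns $M$ to the inputs via $\widetilde M=MR_0$, $Q=R_1^\intercal R_0$ instead of aligning the inputs to $M$, and gets attainment from the completed square rather than from your direct upper bounds, but these differences are cosmetic.

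One small fix is needed. The step forcing $R_1R_0^\intercal\in\mathscr Q(L_0,L_1)$ requires $u(1-u)>0$, so the endpoints $u\in\{0,1\}$ must be treated separately, as the paper does. There \eqref{eq:minimizer.factor.characterization} holds trivially with any $Q\in\mathscr Q(L_0,L_1)$; for example, at $u=1$ one has $M=L_1R_1=L_1Q\,(Q^\intercal R_1)$.
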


\begin{proof}
We verify the barycenter statement and the minimizer characterization directly.  Let \(\bZ=\bG^{b,M}\).  By \eqref{eq:AW2.Gaussian}, the problem separates into the mean part and the factor part.  The mean part
\[
(1-u)\|a_0-b\|_2^2+u\|a_1-b\|_2^2
\]
is minimized uniquely at \(b=\hat a_u\), with value \(u(1-u)\|a_0-a_1\|_2^2\).

For $u=0$ or $u=1$, the factor statement is immediate. In the remainder of the proof assume $0<u<1$.

For the factor part, we pass to the Procrustes form \eqref{eq:Procrustes} once and for all, so that the adapted distance enters the computation only through Frobenius norms:
\begin{equation}
\label{eq:two.point.procrustes.form}
\begin{aligned}
&(1-u)\mathrm{dist}_{\mathrm{ABW}}^2(L_0,M)
+u\,\mathrm{dist}_{\mathrm{ABW}}^2(L_1,M)\\
&\quad=\inf_{R_0,R_1\in\mathscr O(T,d)}
\Bigl\{(1-u)\|L_0-MR_0\|_F^2
+u\|L_1-MR_1\|_F^2\Bigr\}.
\end{aligned}
\end{equation}
Substitute \(\widetilde M:=MR_0\) and \(Q:=R_1^\intercal R_0\); as \((R_0,R_1)\) ranges over \(\mathscr O(T,d)^2\), the pair \((\widetilde M,Q)\) ranges over \(\{MR:R\in\mathscr O(T,d)\}\times\mathscr O(T,d)\).  Since the Frobenius norm is invariant under right multiplication by an orthogonal matrix,
\[
\|L_1-MR_1\|_F=\|(L_1-MR_1)R_1^\intercal R_0\|_F=\|L_1Q-\widetilde M\|_F,
\]
and completing the square,
\[
(1-u)\|L_0-\widetilde M\|_F^2+u\|L_1Q-\widetilde M\|_F^2
=\|\widetilde M-((1-u)L_0+uL_1Q)\|_F^2
+u(1-u)\|L_0-L_1Q\|_F^2.
\]
Minimizing the right-hand side over \(M\) (equivalently, over \(\widetilde M\), since \((1-u)L_0+uL_1Q\in\mathscr L(T,d)\) is attainable) and then over \(Q\in\mathscr O(T,d)\), the infimum in \eqref{eq:two.point.procrustes.form} over all \(M\) equals
\[
u(1-u)\min_{Q\in\mathscr O(T,d)}\|L_0-L_1Q\|_F^2
=u(1-u)\,\mathrm{dist}_{\mathrm{ABW}}^2(L_0,L_1),
\]
attained exactly when \(Q\in\mathscr Q(L_0,L_1)\) and \(\widetilde M=(1-u)L_0+uL_1Q\).  Adding the mean contribution proves \eqref{eq:two.point.value.filtered.gaussian}.

If \(M\) is a minimizer, equality must hold in the preceding completion of the square for some \(R_0,R_1\).  Hence \(Q=R_1^\intercal R_0\in\mathscr Q(L_0,L_1)\) and \(MR_0=(1-u)L_0+uL_1Q\).  Therefore \(M=((1-u)L_0+uL_1Q)R_0^\intercal\), which is \eqref{eq:minimizer.factor.characterization} with \(R=R_0^\intercal\).
\end{proof}

\begin{corollary}[Minimizing geodesics in \(\FG\)]
\label{cor:barycentric.interpolation.geodesic}
Let \(Q\in\mathscr Q(L_0,L_1)\), and define \(\hat{\bX}_u=\bG^{\hat a_u,\hat L_u}\) by \eqref{eq:two.point.interpolation}.  Then, in the quotient \(\FG\),
\[
\AW_2(\bX_0,\hat{\bX}_u)=u\AW_2(\bX_0,\bX_1),
\qquad
\AW_2(\hat{\bX}_u,\bX_1)=(1-u)\AW_2(\bX_0,\bX_1).
\]
Consequently, \(u\mapsto[\hat{\bX}_u]\) is a constant-speed minimizing geodesic from \([\bX_0]\) to \([\bX_1]\).
\end{corollary}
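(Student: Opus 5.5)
We compute both distances from the Gaussian formula \eqref{eq:AW2.Gaussian} in its Procrustes form \eqref{eq:Procrustes}; the constant-speed property then yields geodesicity by the triangle inequality. Throughout, write $D:=\AW_2(\bX_0,\bX_1)$, so that $D^2=\|a_0-a_1\|_2^2+\|L_0-L_1Q\|_F^2$ because $Q\in\mathscr Q(L_0,L_1)$.

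\emph{Upper bounds.} The mean parts are immediate: $\|a_0-\hat a_u\|_2=u\|a_0-a_1\|_2$ and $\|\hat a_u-a_1\|_2=(1-u)\|a_0-a_1\|_2$. For the factor parts, take the identity in $\mathscr O(T,d)$ as a test alignment in \eqref{eq:Procrustes}. This gives $\mathrm{dist}_{\mathrm{ABW}}(L_0,\hat L_u)\le\|L_0-\hat L_u\|_F=u\|L_0-L_1Q\|_F$. Similarly, testing with $Q^\intercal$ gives
\[
\mathrm{dist}_{\mathrm{ABW}}(\hat L_u,L_1)\le\|\hat L_u-L_1Q\|_F=(1-u)\|L_0-L_1Q\|_F,
\]
where we use that $\mathrm{dist}_{\mathrm{ABW}}(\hat L_u,L_1)=\min_R\|\hat L_u-L_1R\|_F$ by symmetry of the distance. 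Adding the mean and factor parts yields
\[
\AW_2(\bX_0,\hat\bX_u)\le uD,\qquad \AW_2(\hat\bX_u,\bX_1)\le(1-u)D.
\]

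\emph{Lower bounds.} By Theorem~\ref{thm:two.point.barycenter.filtered.gaussian}, $\hat\bX_u$ attains
\[
(1-u)\AW_2^2(\bX_0,\hat\bX_u)+u\AW_2^2(\hat\bX_u,\bX_1)=u(1-u)D^2.
\]
Suppose the first upper bound were strict, say $\AW_2(\bX_0,\hat\bX_u)=\alpha D$ with $\alpha<u$, while the second satisfies $\AW_2(\hat\bX_u,\bX_1)=\beta D$ with $\beta\le 1-u$. The triangle inequality gives $\alpha+\beta\ge1$. Then
\[
(1-u)\alpha^2+u\beta^2\ge\min_{\alpha+\beta\ge1}\bigl\{(1-u)\alpha^2+u\beta^2\bigr\}=u(1-u),
\]
and the minimum is attained only at $\alpha=u$, $\beta=1-u$. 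Since equality holds here, we must have $\alpha=u$ and $\beta=1-u$. If $D=0$, all statements are trivial. Alternatively, one can avoid this argument altogether: the triangle inequality together with the two upper bounds forces equality in both, since $uD+(1-u)D=D\le\AW_2(\bX_0,\hat\bX_u)+\AW_2(\hat\bX_u,\bX_1)$. This second route is cleaner, and I will use it.

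\emph{Geodesic property.} For $0\le s\le u\le1$, I want $\AW_2(\hat\bX_s,\hat\bX_u)=(u-s)D$. The upper bound follows directly: $\hat L_u-\hat L_s=(u-s)(L_1Q-L_0)$, with identity alignment in \eqref{eq:Procrustes}, and the mean difference is likewise $(u-s)(a_1-a_0)$. The lower bound comes from the triangle inequality along the chain $\bX_0,\hat\bX_s,\hat\bX_u,\bX_1$: we have $D\le sD+\AW_2(\hat\bX_s,\hat\bX_u)+(1-u)D$, using the endpoint equalities just established. Hence $\AW_2(\hat\bX_s,\hat\bX_u)=(u-s)D$, and $u\mapsto[\hat\bX_u]$ is a constant-speed minimizing geodesic. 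The only point needing care is that $\mathrm{dist}_{\mathrm{ABW}}$ is a pseudo-distance which becomes a genuine metric on the quotient, so that the triangle inequality is available; this is guaranteed by the facts recalled from \cite{gunasingam2026adapted}.
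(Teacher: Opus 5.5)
Your proof is correct and follows essentially the paper's argument: you get upper bounds from the identity alignment and the fixed alignment $Q$, and then the triangle inequality forces equality. You apply it to $\AW_2$ directly rather than to $\mathrm{dist}_{\mathrm{ABW}}$ with the means handled separately, which is equivalent. Your explicit check that $\AW_2(\hat{\bX}_s,\hat{\bX}_u)=(u-s)D$, via $\hat L_u-\hat L_s=(u-s)(L_1Q-L_0)$ and the chain $\bX_0,\hat{\bX}_s,\hat{\bX}_u,\bX_1$, is a genuine addition: the paper proves only the endpoint identities and leaves the intermediate-pair identity implicit in ``Consequently'', and the endpoint identities alone do not force it in a general metric space. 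One small slip: the test alignment in $\min_R\|\hat L_u-L_1R\|_F$ is $R=Q$, not $Q^\intercal$.
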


\begin{proof}
Let \(D=\mathrm{dist}_{\mathrm{ABW}}(L_0,L_1)=\|L_0-L_1Q\|_F\).  Then
\[
\mathrm{dist}_{\mathrm{ABW}}(L_0,\hat L_u)\le\|L_0-\hat L_u\|_F=uD,
\]
and, using the same alignment \(Q\),
\[
\mathrm{dist}_{\mathrm{ABW}}(\hat L_u,L_1)\le\|\hat L_u-L_1Q\|_F=(1-u)D.
\]
The triangle inequality forces equality in both estimates.  The same linear identities hold for the means.  The formula \eqref{eq:AW2.Gaussian} then proves the stated distance identities.
\end{proof}

\subsection{The restricted problem}
\label{sec:restricted.filtered.gaussian.problem}
The results in the remainder of this section concern the constrained problem in which the candidate barycenter is required to lie in the ordinary filtered Gaussian class \(\cFG(T,d)\), or equivalently in the finite-dimensional (quotient) factor space \(\mathscr L(T,d)/\mathscr O(T,d)\).  They are not assertions about arbitrary unrestricted barycenter representatives in \(\FP_2\).  This distinction is essential: the unrestricted problem is solved in Section~\ref{sec:unrestricted-gaussian-barycenters}, while the restricted problem imposes the rank-\(d\) common-noise constraint at every time.

\medskip
After centering the inputs, by \eqref{eq:AW2.Gaussian} the restricted problem reduces to the \emph{restricted factor problem}
\begin{equation}\label{eq:restricted.ABW.factor.problem}
\inf_{L\in\mathscr L(T,d)}\sum_{k=1}^K\lambda_k\mathrm{dist}_{\mathrm{ABW}}^2(L,L^k).
\end{equation}
We first spell out the restricted objective; this formula is the starting point for the constrained problem when the common-noise input condition is not assumed.

\begin{proposition}[Restricted objective decomposition]
\label{prop:FG-restricted-barycenter}
For \(\bY=\bG^{b,M}\in\cFG(T,d)\),
\begin{equation}
\label{eq:restricted.functional.decomposition}
\sum_{k=1}^K\lambda_k\AW_2^2(\bX^k,\bY)
=
\sum_{k=1}^K\lambda_k\|a^k-b\|_2^2+
\sum_{t=1}^T\sum_{k=1}^K\lambda_k\mathrm{dist}_{\BW}^2(L_{\cdot,t}^k(L_{\cdot,t}^k)^\intercal,M_{\cdot,t}M_{\cdot,t}^\intercal).
\end{equation}
Consequently,
\begin{equation}
\label{eq:restricted.value}
\inf_{\bY\in\cFG}\sum_{k=1}^K\lambda_k\AW_2^2(\bX^k,\bY)
=
\sum_{k=1}^K\lambda_k\|a^k-\overline a\|_2^2+
\sum_{t=1}^T\min_{\substack{\mathbf P_t\in\mathscr{P}_+(K,d)\\ \rank(\mathbf P_t)=d}}\Phi_t(\mathbf P_t).
\end{equation}
Equivalently, the second sum on the right-hand side of \eqref{eq:restricted.value} is the infimum of the global trace functional \(\Phi(\mathbf P)\) over those \(\mathbf P\in\mathscr{P}_+(K,T,d)\) whose one-step blocks satisfy \(\rank(\mathbf P_t)=d\) for every \(t\).
In the language of Theorem~\ref{thm:explicit.value}, the restricted problem amounts to imposing the rank constraint \(\rank(\Sigma)\le d\) in the local Bures--Wasserstein barycenter problems \eqref{eq:local.BW.barycenter.identity}.
For fixed \(t\), the local rank-\(d\) infimum admits the equivalent factor form
\begin{equation}
\label{eq:local.restricted.factor.problem}
\min_{\substack{\mathbf P_t\in\mathscr{P}_+(K,d)\\ \rank(\mathbf P_t)=d}}\Phi_t(\mathbf P_t)
=
\inf_{ Q^1,\ldots,Q^K\in\mathscr O(d)}
\left\{
\sum_{k=1}^K\lambda_k\|L_{\cdot,t}^k\|_F^2-
\left\|\sum_{k=1}^K\lambda_kL_{\cdot,t}^k Q^k\right\|_F^2
\right\}.
\end{equation}
For fixed \( Q^1,\ldots, Q^K\), the minimizing block-column is
\begin{equation}
\label{eq:restricted.column.formula}
Y_t=\sum_{k=1}^K\lambda_kL_{\cdot,t}^k Q^k.
\end{equation}
\end{proposition}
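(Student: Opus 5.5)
The decomposition \eqref{eq:restricted.functional.decomposition} follows directly from results already available. Combining \eqref{eq:AW2.Gaussian} for each pair $(\bX^k,\bY)$ with Proposition~\ref{prop:decomp} applied to $(L^k,M)$ gives $\AW_2^2(\bX^k,\bY)=\|a^k-b\|_2^2+\sum_t\mathrm{dist}_{\BW}^2(A_t^k,M_{\cdot,t}M_{\cdot,t}^\intercal)$. Averaging with weights $\lambda_k$ yields the claim. The mean part is minimized uniquely at $b=\overline a$ with value $\sum_k\lambda_k\|a^k-\overline a\|_2^2$. The factor part separates over $t$, because the block-columns $M_{\cdot,1},\dots,M_{\cdot,T}$ of an element of $\mathscr L(T,d)$ can be chosen independently: each $M_{\cdot,t}$ is an arbitrary $Td\times d$ matrix whose first $t-1$ block rows vanish. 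It therefore suffices to show, for each fixed $t$, that
\[
\inf_{Y}\sum_k\lambda_k\mathrm{dist}_{\BW}^2\bigl(A_t^k,YY^\intercal\bigr)=\min_{\rank(\mathbf P_t)=d}\Phi_t(\mathbf P_t),
\]
where $Y$ ranges over $Td\times d$ matrices with zero rows before time $t$.

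For the inequality ``$\ge$'' I will reuse the completing-the-square computation from the necessity part of Theorem~\ref{thm:FG-common-noise-criterion}. For a given $Y$, pick Procrustes optimizers $Q^k\in\mathscr O(d)$ with $\mathrm{dist}_{\BW}^2(A_t^k,YY^\intercal)=\|L_{\cdot,t}^kQ^k-Y\|_F^2$, which exist by \eqref{eqn:d.BW} together with the variational form. Set $\widebar Y=\sum_k\lambda_kL^k_{\cdot,t}Q^k$. The weighted bias--variance identity then gives
\[
\sum_k\lambda_k\|L_{\cdot,t}^kQ^k-Y\|_F^2=\Phi_t(\mathbf P)+\|Y-\widebar Y\|_F^2,\qquad \mathbf P^{k,\ell}:=Q^k(Q^\ell)^\intercal.
\]
By Lemma~\ref{lem:Pk-rank-common-noise}, $\mathbf P$ has rank $d$, so the left side is at least the rank-$d$ minimum.

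For the inequality ``$\le$'', take any rank-$d$ matrix $\mathbf P_t$. By Lemma~\ref{lem:Pk-rank-common-noise} it has the form $\mathbf P_t^{k,\ell}=Q^k(Q^\ell)^\intercal$. Set $Y:=\sum_k\lambda_kL^k_{\cdot,t}Q^k$; its first $t-1$ block rows vanish, so it is an admissible block-column. Using $\mathrm{dist}_{\BW}^2(A_t^k,YY^\intercal)\le\|L^k_{\cdot,t}Q^k-Y\|_F^2$ (Procrustes is a minimum) and the same identity with $\widebar Y=Y$, we get $\sum_k\lambda_k\mathrm{dist}_{\BW}^2\le\Phi_t(\mathbf P_t)$.

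The remaining identities are then quick. Expanding $\Phi_t(\mathbf P_t)$ via \eqref{eq:Phi_t.expanded} with $\mathbf P_t^{\ell,k}=Q^\ell(Q^k)^\intercal$ gives $\sum_k\lambda_k\|L^k_{\cdot,t}\|_F^2-\|\sum_k\lambda_kL^k_{\cdot,t}Q^k\|_F^2$, which is \eqref{eq:local.restricted.factor.problem}. The minimizing column \eqref{eq:restricted.column.formula} comes from the vanishing of the square term $\|Y-\widebar Y\|_F^2$ for fixed $Q^k$. Attainment of the minimum holds because the set of rank-$d$ elements $\{(Q^k(Q^\ell)^\intercal)\}$ is the continuous image of the compact set $\mathscr O(d)^K$. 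The reformulation as $\rank(\Sigma)\le d$ follows because admissible $\Sigma=YY^\intercal$ are exactly the matrices with rank at most $d$ supported on the rows from time $t$ onward. The support constraint is harmless: a BW-optimal $\Sigma$ can be taken with range inside $\mathrm{span}\bigcup_k\mathrm{Im}A_t^k$, since projecting onto that span decreases every $\mathrm{dist}_{\BW}$. This last point is the one I would check most carefully; it can alternatively be avoided by using the ``$\le$'' construction, which already produces an admissible $Y$.
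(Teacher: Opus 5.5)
Your proposal is correct and follows essentially the paper's route. The paper writes $\sum_k\lambda_k\mathrm{dist}_{\BW}^2(A_t^k,YY^\intercal)$ as an infimum over $Q^1,\ldots,Q^K\in\mathscr O(d)$ of $\sum_k\lambda_k\|L_{\cdot,t}^kQ^k-Y\|_F^2$, completes the square for fixed $Q^k$, and identifies the remainder with $\Phi_t$ at a rank-$d$ common-noise matrix via Lemma~\ref{lem:Pk-rank-common-noise}; your ``$\ge$'' and ``$\le$'' inequalities are the same argument written as an interchange of infima.

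Your explicit treatment of attainment, via compactness of $\mathscr O(d)^K$, fills in a point the paper's proof leaves implicit. The same holds for the $\rank(\Sigma)\le d$ reformulation, and your projection argument there is valid, since $(L_{\cdot,t}^k)^\intercal\Pi=(L_{\cdot,t}^k)^\intercal$ keeps the nuclear-norm term fixed while $\|\Pi Y\|_F\le\|Y\|_F$. Alternatively, your ``$\ge$'' bound never uses admissibility of $Y$, so it gives that reformulation directly.
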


\begin{proof}
By \eqref{eq:AW2.Gaussian} and \eqref{eq:ABW},
\begin{align*}
\AW_2^2(\bX^k,\bG^{b,M})
&=\|a^k-b\|_2^2+
\sum_{t=1}^T\bigl(\|L_{\cdot,t}^k\|_F^2+\|M_{\cdot,t}\|_F^2
-2\|(L_{\cdot,t}^k)^\intercal M_{\cdot,t}\|_*\bigr).
\end{align*}
Using \eqref{eqn:d.BW} with \(L_{\cdot,t}^k(L_{\cdot,t}^k)^\intercal\) and \(M_{\cdot,t}M_{\cdot,t}^\intercal\) gives \eqref{eq:restricted.functional.decomposition}.  The mean term is minimized uniquely at \(b=\overline a\).

Fix \(t\), and write \(Y\) for a possible value of \(M_{\cdot,t}\).  Again by \eqref{eqn:d.BW},
\[
\sum_{k=1}^K\lambda_k\mathrm{dist}_{\BW}^2(A_t^k,YY^\intercal)
=
\inf_{ Q^1,\ldots, Q^K\in\mathscr O(d)}
\sum_{k=1}^K\lambda_k\|L_{\cdot,t}^k Q^k-Y\|_F^2.
\]
For fixed \( Q^1,\ldots, Q^K\), completing the square gives
\begin{align*}
\sum_{k=1}^K\lambda_k\|L_{\cdot,t}^k Q^k-Y\|_F^2
&=\left\|Y-\sum_{k=1}^K\lambda_kL_{\cdot,t}^k Q^k\right\|_F^2 \\
&\quad+\sum_{k=1}^K\lambda_k\|L_{\cdot,t}^k\|_F^2
-\left\|\sum_{k=1}^K\lambda_kL_{\cdot,t}^k Q^k\right\|_F^2.
\end{align*}
The first term is minimized uniquely by \eqref{eq:restricted.column.formula}. If \(\mathbf P_t^{k,\ell}= Q^k(Q^\ell)^\intercal\), then \(\mathbf P_t\in\mathscr{P}_+(K,d)\), \(\rank(\mathbf P_t)=d\), and the remaining expression is \(\Phi_t(\mathbf P_t)\). Conversely, Lemma~\ref{lem:Pk-rank-common-noise} shows that every rank-\(d\) element of \(\mathscr{P}_+(K,d)\) has this form. This proves \eqref{eq:local.restricted.factor.problem}. Since each local minimizer has zero block rows before time \(t\), the local block-columns assemble into an element of \(\mathscr L(T,d)\), and summing over \(t\) gives \eqref{eq:restricted.value}.
\end{proof}

\begin{theorem}[Restricted filtered Gaussian barycenters]
\label{thm:restricted-FG-barycenters}
The restricted problem
\[
\inf_{\bY\in\cFG(T,d)}\sum_{k=1}^K\lambda_k\AW_2^2(\bX^k,\bY)
\]
admits a minimizer, and its value is \eqref{eq:restricted.value}.  Moreover, \(\bG^{\widebar a,\widebar L}\in\cFG(T,d)\) is a restricted minimizer if and only if, for every \(t\in[T]\), there exist \( Q_t^1,\ldots,Q_t^K\in\mathscr O(d)\) such that
\begin{equation}
\label{eq:restricted.theorem.column}
\widebar L_{\cdot,t}=\sum_{k=1}^K\lambda_kL_{\cdot,t}^k Q_t^k
\end{equation}
and the common-noise matrix
\begin{equation}
\label{eq:restricted.theorem.P}
\mathbf P_t^{k,\ell}= Q_t^k( Q_t^\ell)^\intercal,
\qquad k,\ell\in[K],
\end{equation}
attains
\[
\min_{\substack{\mathbf P_t\in\mathscr{P}_+(K,d)\\ \rank(\mathbf P_t)=d}}\Phi_t(\mathbf P_t).
\]
A restricted minimizer is an unrestricted adapted barycenter if and only if the matrices \(\mathbf P_t\) in \eqref{eq:restricted.theorem.P} can be chosen to minimize \(\Phi_t\) over all of \(\mathscr{P}_+(K,d)\) for every \(t\).  Equivalently, Condition~\ref{cond:common-noise.input} holds and the restricted minimizer is built from unrestricted local optimizers.
\end{theorem}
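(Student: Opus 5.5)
The proof reduces almost entirely to Proposition~\ref{prop:FG-restricted-barycenter} plus a compactness argument.

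\emph{Attainment of the local minima.} Fix $t$. By Lemma~\ref{lem:Pk-rank-common-noise}, the rank-$d$ elements of $\mathscr P_+(K,d)$ are exactly the images of $\mathscr O(d)^K$ under $(Q^1,\dots,Q^K)\mapsto (Q^k(Q^\ell)^\intercal)_{k,\ell}$. This set is a continuous image of a compact set, hence compact. Since $\Phi_t$ is continuous, the constrained minimum in \eqref{eq:restricted.value} is attained. Concretely, I would work with the factor form \eqref{eq:local.restricted.factor.problem}: the map
\[
(Q^1,\dots,Q^K)\mapsto \sum_k\lambda_k\|L^k_{\cdot,t}\|_F^2-\bigl\|\sum_k\lambda_kL^k_{\cdot,t}Q^k\bigr\|_F^2
\]
is continuous on $\mathscr O(d)^K$, so it has a minimizer $(Q_t^k)_k$.

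\emph{Existence.} Set $\widebar L_{\cdot,t}:=\sum_k\lambda_kL^k_{\cdot,t}Q^k_t$. This column has zero block rows before time $t$, so the columns assemble into some $\widebar L\in\mathscr L(T,d)$. By \eqref{eq:restricted.functional.decomposition} and the completing-the-square identity from the proof of Proposition~\ref{prop:FG-restricted-barycenter}, the objective at $\bG^{\widebar a,\widebar L}$ is bounded above by the mean term plus $\sum_t\Phi_t(\mathbf P_t)$. Here $\mathbf P_t$ is the common-noise matrix \eqref{eq:restricted.theorem.P}. This upper bound equals the value \eqref{eq:restricted.value}, which is also a lower bound, so $\bG^{\widebar a,\widebar L}$ is a minimizer. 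This gives existence and the value, and also proves the ``if'' direction of the characterization.

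\emph{``Only if'' direction.} This is the step needing care, and it mirrors the necessity argument in Theorem~\ref{thm:FG-common-noise-criterion}. Let $\bG^{b,M}$ be a restricted minimizer. The mean term forces $b=\widebar a$. For each $t,k$, choose a Procrustes optimizer $Q^k_t$ with
\[
\mathrm{dist}_{\BW}^2(A_t^k,M_{\cdot,t}M_{\cdot,t}^\intercal)=\|L^k_{\cdot,t}Q^k_t-M_{\cdot,t}\|_F^2 .
\]
Completing the square then gives
\[
\text{local term}=\Phi_t(\mathbf P_t)+\Bigl\|M_{\cdot,t}-\sum_k\lambda_kL^k_{\cdot,t}Q^k_t\Bigr\|_F^2\ \ge\ \min_{\rank=d}\Phi_t .
\]
Summing over $t$ and comparing with the optimal value forces equality at every $t$. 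Hence $\mathbf P_t$ attains the rank-$d$ minimum and $M_{\cdot,t}=\sum_k\lambda_kL^k_{\cdot,t}Q^k_t$, which is \eqref{eq:restricted.theorem.column}. The subtlety is that $M$ is only determined up to $\mathscr O(T,d)$. The chosen Procrustes alignments absorb this: they are picked relative to the given $M$ itself, so the identity holds on the nose and no quotient issue remains.

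\emph{Final equivalence.} A restricted minimizer has objective $\sum_t\min_{\rank=d}\Phi_t$ plus the mean term. The unrestricted value is the same expression with $\min_{\mathscr P_+}\Phi_t$ in place of the rank-constrained minimum, by \eqref{eq:unrestricted.value}. The restricted minimizer is an unrestricted barycenter iff these totals agree. Since each rank-constrained minimum dominates the corresponding unconstrained one termwise, the totals agree iff the two minima coincide for every $t$. That holds iff the rank-$d$ minimizers $\mathbf P_t$ above also minimize over all of $\mathscr P_+(K,d)$, which is exactly Condition~\ref{cond:common-noise.input} realized by these optimizers. The ``if'' direction also follows from Theorem~\ref{thm:FG-common-noise-criterion}.
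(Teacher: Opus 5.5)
Your proposal is correct and follows essentially the same route as the paper. You attain each rank-$d$ local minimum by compactness (you use that the rank-$d$ set is the continuous image of $\mathscr O(d)^K$, where the paper uses closedness of the condition $\rank\le d$), assemble the aligned averages, prove necessity by choosing Procrustes alignments relative to the given minimizer and completing the square, and obtain the final equivalence by comparing \eqref{eq:restricted.value} with \eqref{eq:unrestricted.value} termwise. Your explicit upper-bound check that the assembled factor attains the value is a minor addition the paper leaves implicit in Proposition~\ref{prop:FG-restricted-barycenter}.
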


\begin{proof}
Proposition~\ref{prop:FG-restricted-barycenter} gives the value formula and reduces the problem to independent local minimizations.  The set
\[
\{\mathbf P\in\mathscr{P}_+(K,d):\rank(\mathbf P)=d\}
\]
is compact, because it is the intersection of the compact set \(\mathscr{P}_+(K,d)\) with the closed condition \(\rank(\mathbf P)\le d\), and every element of \(\mathscr{P}_+(K,d)\) has rank at least \(d\).  Thus, each local rank-\(d\) infimum is attained.  Lemma~\ref{lem:Pk-rank-common-noise} gives the common-noise representation \eqref{eq:restricted.theorem.P}, and the completed-square identity in the proof of Proposition~\ref{prop:FG-restricted-barycenter} gives \eqref{eq:restricted.theorem.column}.  These block-columns assemble into \(\widebar L\in\mathscr L(T,d)\), proving existence and one direction of the characterization.

Conversely, suppose \(\bG^{\widebar a,\widebar L}\) is a restricted minimizer.  The mean must be \(\widebar a\).  By the separation in \eqref{eq:restricted.functional.decomposition}, each block-column \(\widebar L_{\cdot,t}\) minimizes the local factor problem.  Choose Procrustes optimizers \(\mathbf Q_t^k\in\mathscr O(d)\) such that
\[
\mathrm{dist}_{\BW}^2(L_{\cdot,t}^k(L_{\cdot,t}^k)^\intercal,\widebar L_{\cdot,t}\widebar L_{\cdot,t}^\intercal)
=\|L_{\cdot,t}^k\mathbf Q_t^k-\widebar L_{\cdot,t}\|_F^2.
\]
Set \(\widetilde L_{\cdot,t}:=\sum_k\lambda_kL_{\cdot,t}^k Q_t^k\).  Completing the square gives
\begin{align*}
&\sum_{k=1}^K\lambda_k\|L_{\cdot,t}^k Q_t^k-\widebar L_{\cdot,t}\|_F^2 \\
&\quad=
\sum_{k=1}^K\lambda_k\|L_{\cdot,t}^k Q_t^k-\widetilde L_{\cdot,t}\|_F^2
+\|\widebar L_{\cdot,t}-\widetilde L_{\cdot,t}\|_F^2.
\end{align*}
The first term on the right is the value of \(\Phi_t\) at the common-noise matrix \eqref{eq:restricted.theorem.P}; hence it is at least the rank-\(d\) infimum.  The left-hand side is exactly the local minimum.  Therefore the nonnegative second term vanishes, \(\widebar L_{\cdot,t}=\widetilde L_{\cdot,t}\), and \(\mathbf P_t\) attains the rank-\(d\) infimum.

The final assertion follows by comparing \eqref{eq:restricted.value} with the unrestricted value formula \eqref{eq:unrestricted.value}.  Each local rank-\(d\) infimum is at least the unrestricted local infimum.  Hence equality of the restricted and unrestricted values is equivalent to equality at every time, which is precisely the condition that the chosen \(\mathbf P_t\)'s minimize \(\Phi_t\) on all of \(\mathscr{P}_+(K,d)\).  The equivalence with Theorem~\ref{thm:FG-common-noise-criterion} follows from the common-noise criterion.
\end{proof}

\subsection{Fixed-point optimality for the restricted problem}\label{sec:characterization}

The finite-dimensional restricted factor problem has the usual Procrustes optimality condition.  In contrast with the earlier unqualified formulation, this condition is not a general uniqueness theorem.

\begin{proposition}[Restricted Procrustes fixed-point condition]\label{thm:characterization}
Let $\widebar L$ be a minimizer of \eqref{eq:restricted.ABW.factor.problem}.  Then there exist Procrustes optimizers
\[
Q^k\in\mathscr Q(\widebar L,L^k):=\argmin_{Q\in\mathscr O(T,d)}\|\widebar L-L^kQ\|_F^2,
\qquad k=1,\ldots,K,
\]
such that
\begin{equation}\label{eq:FPE}
\widebar L=\sum_{k=1}^K\lambda_kL^kQ^k.
\end{equation}
Conversely, if $\widebar L$ and $(Q^1,\ldots,Q^K)$ solve the joint global minimization problem
\[
\inf_{L\in\mathscr L(T,d),\,Q^1,\ldots,Q^K\in\mathscr O(T,d)}
\sum_{k=1}^K\lambda_k\|L-L^kQ^k\|_F^2,
\]
with each $Q^k\in\mathscr Q(\widebar L,L^k)$, then $\widebar L$ is a restricted minimizer.
\end{proposition}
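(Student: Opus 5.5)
The plan is to prove both directions by rewriting the restricted objective in Procrustes form. Then a single completing-the-square identity, already used in the proof of Proposition~\ref{prop:FG-restricted-barycenter}, does all the work.

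By \eqref{eq:Procrustes}, for every $L\in\mathscr L(T,d)$,
\[
F(L):=\sum_{k=1}^K\lambda_k\mathrm{dist}_{\mathrm{ABW}}^2(L,L^k)=\min_{Q^1,\ldots,Q^K\in\mathscr O(T,d)} G(L,Q^1,\ldots,Q^K),
\qquad
G(L,\mathbf Q):=\sum_{k=1}^K\lambda_k\|L-L^kQ^k\|_F^2 .
\]
For fixed $\mathbf Q=(Q^1,\ldots,Q^K)$ we complete the square:
\[
G(L,\mathbf Q)=\Bigl\|L-\sum_k\lambda_kL^kQ^k\Bigr\|_F^2+\sum_k\lambda_k\|L^kQ^k\|_F^2-\Bigl\|\sum_k\lambda_kL^kQ^k\Bigr\|_F^2 .
\]
We also note that $\sum_k\lambda_kL^kQ^k\in\mathscr L(T,d)$. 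This holds because right multiplication by a block-diagonal orthogonal matrix preserves block lower-triangularity, so the minimizer in $L$ is admissible. Consequently
\[
\inf_{L}F(L)=\inf_{L,\mathbf Q}G(L,\mathbf Q).
\]

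For the forward direction, let $\widebar L$ minimize $F$, and pick $Q^k\in\mathscr Q(\widebar L,L^k)$. This is possible since $\mathscr O(T,d)$ is compact and the sets $\mathscr Q$ are nonempty. Then $F(\widebar L)=G(\widebar L,\mathbf Q)$. Set $\widetilde L:=\sum_k\lambda_kL^kQ^k$. The identity above gives
\[
G(\widetilde L,\mathbf Q)=G(\widebar L,\mathbf Q)-\|\widebar L-\widetilde L\|_F^2 .
\]
On the other hand, $F(\widetilde L)\le G(\widetilde L,\mathbf Q)$, since $F$ is a minimum over alignments, and $F(\widetilde L)\ge F(\widebar L)$ by optimality of $\widebar L$. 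Chaining these,
\[
F(\widebar L)\le F(\widetilde L)\le F(\widebar L)-\|\widebar L-\widetilde L\|_F^2,
\]
which forces $\widebar L=\widetilde L$. This is \eqref{eq:FPE} with the chosen Procrustes optimizers.

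For the converse, suppose $(\widebar L,\mathbf Q)$ minimizes $G$ jointly. Then for any $L$,
\[
F(L)=\min_{\mathbf Q'}G(L,\mathbf Q')\ge\inf_{L',\mathbf Q'}G(L',\mathbf Q')=G(\widebar L,\mathbf Q).
\]
Since each $Q^k\in\mathscr Q(\widebar L,L^k)$, we have $G(\widebar L,\mathbf Q)=F(\widebar L)$. Hence $F(\widebar L)\le F(L)$ for all $L$, and $\widebar L$ is a restricted minimizer. In fact the hypothesis $Q^k\in\mathscr Q(\widebar L,L^k)$ is automatic for a joint minimizer, but it is harmless to use it.

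The only point needing care is the choice of the alignments in the forward direction. We must take optimizers for $\widebar L$ itself, not arbitrary ones, so that $F(\widebar L)=G(\widebar L,\mathbf Q)$ holds exactly. We must also check that the averaged factor stays in $\mathscr L(T,d)$. Both are immediate, so no serious obstacle is expected. The statement is deliberately only a necessary condition, together with a converse under joint global optimality, and the argument does not claim that every fixed point is a minimizer.
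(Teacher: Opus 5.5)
Your proposal is correct and follows essentially the paper's argument: fix Procrustes optimizers $Q^k\in\mathscr Q(\widebar L,L^k)$ at the minimizer, observe that $\widebar L$ must then minimize the fixed-alignment quadratic $L\mapsto\sum_k\lambda_k\|L-L^kQ^k\|_F^2$, whose unique minimizer in $\mathscr L(T,d)$ is $\sum_k\lambda_kL^kQ^k$. Your completing-the-square chain only makes explicit the paper's appeal to strict convexity, and your converse coincides with the paper's.
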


\begin{proof}
Choose $Q^k\in\mathscr Q(\widebar L,L^k)$ for a restricted minimizer $\widebar L$.  For these fixed orthogonal matrices, the function
\[
L\mapsto\sum_{k=1}^K\lambda_k\|L-L^kQ^k\|_F^2
\]
is strictly convex and quadratic on the linear space $\mathscr L(T,d)$, with unique minimizer $\sum_{k=1}^K\lambda_kL^kQ^k$.  Since $\widebar L$ minimizes the full objective, it must minimize this fixed-$Q$ quadratic problem, yielding \eqref{eq:FPE}.  The converse is immediate from the stated joint global optimality.
\end{proof}

\subsection{Regularity of restricted minimizers}\label{sec:regularity}

In this subsection we show that restricted adapted Bures--Wasserstein barycenters of regular factors are themselves regular. Recall from Section~\ref{sec:AOT} that $L\in\mathscr L^{\mathrm{reg}}(T,d)$ if and only if $(L^\intercal L)_{t,t}=L_{\cdot,t}^\intercal L_{\cdot,t}$ is positive definite for all $t=1,\ldots,T$, equivalently each block column $L_{\cdot,t}$ has full column rank $d$.

\begin{theorem}[Regularity of restricted barycenters]\label{thm:regularity}
Let $L^1,\ldots,L^K\in\mathscr L^{\mathrm{reg}}(T,d)$ and let $\lambda_1,\ldots,\lambda_K>0$ with $\sum_{k=1}^K\lambda_k=1$. Then every restricted minimizer $M$ of \eqref{eq:restricted.ABW.factor.problem} lies in $\mathscr L^{\mathrm{reg}}(T,d)$.
\end{theorem}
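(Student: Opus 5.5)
By Proposition~\ref{prop:FG-restricted-barycenter}, the restricted factor problem \eqref{eq:restricted.ABW.factor.problem} separates over time. The plan is therefore to show, for each fixed $t$, that a minimizing block-column cannot be rank deficient, by exhibiting an explicit perturbation that strictly lowers the objective.

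\textbf{Reduction to one block-column.} Write $Y:=M_{\cdot,t}\in\bR^{Td\times d}$, whose block rows before time $t$ are zero. By \eqref{eq:restricted.functional.decomposition} and \eqref{eqn:d.BW}, $Y$ minimizes
\[
f(Y)=\sum_{k=1}^K\lambda_k\Bigl(\|L^k_{\cdot,t}\|_F^2+\|Y\|_F^2-2\bigl\|(L^k_{\cdot,t})^\intercal Y\bigr\|_*\Bigr)
\]
over all such matrices. Indeed, changing $M_{\cdot,t}$ alone affects only the $t$-th summand, and any matrix with zero rows before $t$ is an admissible block-column. Suppose, for contradiction, that $\rank(Y)=r<d$. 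Then there is a unit vector $v\in\bR^d$ with $Yv=0$.

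\textbf{Perturbation and the key inequality.} For $w\in\bR^{Td}$ with zero blocks before time $t$ and $s>0$, set $Y_s:=Y+s\,wv^\intercal$. This perturbation is admissible. Since $Yv=0$, the quadratic term becomes
\[
\|Y_s\|_F^2=\|Y\|_F^2+s^2\|w\|^2 .
\]
For the nuclear-norm terms, fix $k$ and put $B:=(L^k_{\cdot,t})^\intercal Y$ and $u:=(L^k_{\cdot,t})^\intercal w$. Note that $Bv=0$.

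Take a thin SVD $B=USV^\intercal$ with $U,V$ having $r_k\le r$ orthonormal columns. Since $Bv=0$, the vector $v$ is orthogonal to the columns of $V$. Let $u':=(I-UU^\intercal)u$. If $u'\neq0$, define
\[
W:=UV^\intercal+\frac{u'}{\|u'\|}\,v^\intercal .
\]
This $W$ is a partial isometry: it maps $\mathrm{Im}(V)$ onto $\mathrm{Im}(U)$ and maps $v$ to a unit vector orthogonal to $\mathrm{Im}(U)$. Hence $\|W\|_{2\to2}\le1$. By duality of the operator and nuclear norms, and using $u^\intercal U V^\intercal v=0$,
\[
\|B+s\,uv^\intercal\|_*\ \ge\ \langle W,B+s\,uv^\intercal\rangle\ =\ \|B\|_*+s\|u'\| .
\]
This inequality holds trivially when $u'=0$. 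Summing over $k$ gives
\[
f(Y_s)\le f(Y)+s^2\|w\|^2-2s\sum_k\lambda_k\bigl\|(I-U_kU_k^\intercal)(L^k_{\cdot,t})^\intercal w\bigr\| .
\]

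\textbf{Choice of $w$, where regularity enters.} It suffices to find a single $w$ for which the linear coefficient is positive for some $k$, say $k=1$. By regularity, $L^1_{\cdot,t}$ has full column rank $d$. Hence $w\mapsto(L^1_{\cdot,t})^\intercal w$ maps onto $\bR^d$. Moreover, one may restrict to $w$ with zero blocks before $t$, since those rows of $L^1_{\cdot,t}$ vanish anyway.

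The subspace $\mathrm{Im}(U_1)$ has dimension at most $r<d$. So we can choose $w$ with $(L^1_{\cdot,t})^\intercal w\notin\mathrm{Im}(U_1)$, which makes the corresponding $u'\ne0$. Then $f(Y_s)<f(Y)$ for all sufficiently small $s>0$, contradicting minimality. Hence $\rank(M_{\cdot,t})=d$ for every $t$, that is, $M\in\mathscr L^{\mathrm{reg}}(T,d)$. The case $r=0$ is covered by the same argument with $U_1$ empty.

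\textbf{Main obstacle.} The delicate point is the first-order lower bound on the nuclear norm in the kernel direction. One must verify carefully that $W$ is a contraction and that the cross term $\langle UV^\intercal,uv^\intercal\rangle$ vanishes. Both facts rely on $Bv=0$, which in turn follows from $Yv=0$.
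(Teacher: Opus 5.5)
Your proof is correct and follows the paper's argument: the same contradiction via a rank-one perturbation $M_{\cdot,t}+s\,wv^\intercal$ along a kernel vector $v$, with $w$ chosen using the full column rank of $L^1_{\cdot,t}$ so that $(L^1_{\cdot,t})^\intercal w$ leaves $\mathrm{Im}\bigl((L^1_{\cdot,t})^\intercal M_{\cdot,t}\bigr)$, and a first-order decrease of the nuclear-norm terms beating the $O(s^2)$ Frobenius increase. The differences are minor. You replace the paper's directional-derivative expansion of the nuclear norm (with an $o(\delta)$ remainder) by the explicit subgradient $W=UV^\intercal+\frac{u'}{\|u'\|}v^\intercal$, which gives an exact inequality and an explicit admissible step size. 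You also dispense with the fixed-point equation, which the paper invokes but does not actually rely on.
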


\begin{proof}
By Proposition~\ref{prop:decomp}, the restricted barycenter problem decomposes column by column. For $t\in[T]$, let
\[
\mathcal \mathbf C_t:=\left\{
Y=(Y_1^\intercal,\ldots,Y_T^\intercal)^\intercal\in\bR^{Td\times d}
:
Y_s=0\text{ for }s<t
\right\}
\]
be the space of admissible $t$-th block columns. The $t$-th block column of any restricted minimizer is therefore a minimizer of the block-column barycenter functional
\begin{equation}\label{eq:block_col_bary}
F_t(Y):=\sum_{k=1}^K \lambda_k \min_{Q^k\in\mathscr O(d)}
\|Y-L_{\cdot,t}^kQ^k\|_F^2,
\qquad Y\in\mathcal \mathbf C_t.
\end{equation}
Since $L^k\in\mathscr L^{\mathrm{reg}}(T,d)$, each block column $L_{\cdot,t}^k$ has full column rank $d$. We need to show that the minimizer $M_{\cdot,t}$ of \eqref{eq:block_col_bary} has also full column rank $d$ for every $t$. In order to do so, we show by a contradiction argument that if the minimizer $M_{\cdot,t}$ does not have full column rank, then we can construct a competitor for \eqref{eq:block_col_bary} such that $F_t(M_{\cdot,t}^\delta)<F_t(M_{\cdot,t})$.

\medskip
Suppose that $\rank(M_{\cdot,t})<d$ for some $t$. Then there exists a unit vector $v\in\bR^d$ with $M_{\cdot,t}v=0$. By the fixed-point equation \eqref{eq:FPE} of Proposition~\ref{thm:characterization}, the restricted minimizer satisfies
\[
M=\sum_{k=1}^K\lambda_k L^kQ^k
\]
for some $Q^k\in\mathscr Q(M,L^k)$. Writing $Q^k_t\in\mathscr O(d)$ for the $t$-th diagonal block of $Q^k$, this gives
\[
M_{\cdot,t}=\sum_{k=1}^K\lambda_kL_{\cdot,t}^kQ^k_t.
\]
Thus,
\begin{equation}\label{eq:reg_kernel}
0=M_{\cdot,t}v=\sum_{k=1}^K\lambda_kL_{\cdot,t}^kQ^{k}_tv.
\end{equation}
We now construct a perturbation that strictly decreases the objective \eqref{eq:block_col_bary}. For each $k$, set
\[
B^k:=(L_{\cdot,t}^k)^\intercal M_{\cdot,t}\in\bR^{d\times d}.
\]
Since $M_{\cdot,t}v=0$, we have $B^kv=0$ for all $k$. Write
\[
v^\perp:=\{u\in\bR^d:u^\intercal v=0\}
\]
for the orthogonal complement of $\operatorname{Span}(v)$ and
\[
B^k|_{v^\perp}:v^\perp\to\bR^d
\]
for the restriction of $B^k$ to $v^\perp$. Since $\dim(v^\perp)=d-1$, we have
\[
\rank(B^k|_{v^\perp})\le d-1
\]
for every $k$.

\medskip
{\sc Step 1.} \textit{Choice of the perturbation direction.}
Fix $k=1$. Since $\rank(B^1|_{v^\perp})\le d-1$, the orthogonal complement $\Image(B^1|_{v^\perp})^\perp$ in $\bR^d$ is nontrivial. Let $w\in\Image(B^1|_{v^\perp})^\perp$ be a unit vector. Since $L_{\cdot,t}^1$ has full column rank $d$, the matrix
\[
(L_{\cdot,t}^1)^\intercal L_{\cdot,t}^1\in\bR^{d\times d}
\]
is positive definite and thus invertible. Define
\[
h:=L_{\cdot,t}^1\bigl((L_{\cdot,t}^1)^\intercal L_{\cdot,t}^1\bigr)^{-1}w
\in\bR^{Td}.
\]
The vector $h$ has zero blocks before time $t$, so $hv^\intercal\in\mathcal \mathbf C_t$. Moreover, $(L_{\cdot,t}^1)^\intercal h=w$, and by the choice of $w$,
\begin{equation}\label{eq:c1perp}
c_{1,\perp}
:=
\textup{projection of }(L_{\cdot,t}^1)^\intercal h
\textup{ onto }\Image(B^1|_{v^\perp})^\perp
=w\ne0.
\end{equation}
For $k=2,\ldots,K$, write $c_{k,\perp}$ for the projection of $(L_{\cdot,t}^k)^\intercal h$ onto $\Image(B^k|_{v^\perp})^\perp$.

\medskip
{\sc Step 2.} \textit{The perturbation.}
Consider
\[
M_{\cdot,t}^\delta:=M_{\cdot,t}+\delta hv^\intercal
\]
for $\delta>0$. Since $hv^\intercal\in\mathcal \mathbf C_t$, also $M_{\cdot,t}^\delta\in\mathcal \mathbf C_t$. The Procrustes distance for each $k$ satisfies
\[
\min_{\mathbf Q^k\in\mathscr O(d)}
\|M_{\cdot,t}^\delta-L_{\cdot,t}^k\mathbf Q^k\|_F^2
=
\|M_{\cdot,t}^\delta\|_F^2+\|L_{\cdot,t}^k\|_F^2
-2\tr(S_{k,t}^\delta),
\]
where $S_{k,t}^\delta$ is the diagonal matrix of singular values of
\[
(L_{\cdot,t}^k)^\intercal M_{\cdot,t}^\delta.
\]
We now turn to
\[
(L_{\cdot,t}^k)^\intercal M_{\cdot,t}^\delta
=
B^k+\delta\bigl((L_{\cdot,t}^k)^\intercal h\bigr)v^\intercal.
\]
Choose a basis of $\bR^d$ in which $v=e_d$. In this basis,
\[
B^k=(\hat B^k\mid 0),
\qquad
\hat B^k\in\bR^{d\times(d-1)},
\]
and the perturbation gives
\[
B^k+\delta(L_{\cdot,t}^k)^\intercal h\,e_d^\intercal
=
(\hat B^k\mid \delta(L_{\cdot,t}^k)^\intercal h).
\]
Let $c_k:=(L_{\cdot,t}^k)^\intercal h$. The directional derivative formula for the nuclear norm, applied at $(\hat B^k\mid0)$ in the direction $(0\mid c_k)$, gives
\begin{equation}\label{eq:nuclear_expansion}
\tr(S_{k,t}^\delta)
=\tr(S_{k,t})
+\delta\bigl\|\operatorname{proj}_{\Image(\hat B^k)^\perp}c_k\bigr\|_2
+o(\delta)
=\tr(S_{k,t})+\delta\|c_{k,\perp}\|_2+o(\delta)
\end{equation}
as $\delta\to0^+$, where $S_{k,t}$ is the diagonal matrix of singular values of $(L_{\cdot,t}^k)^\intercal M_{\cdot,t}$. Indeed, in a singular value decomposition of $(\hat B^k\mid0)$, the nonzero right singular vectors have zero last coordinate, so the smooth part of the directional derivative vanishes; the null-space contribution is precisely the norm of the component of $c_k$ orthogonal to $\Image(\hat B^k)$.

\medskip
{\sc Step 3.} \textit{Conclusion.}
Using $M_{\cdot,t}v=0$, we have
\[
\|M_{\cdot,t}^\delta\|_F^2
=
\|M_{\cdot,t}\|_F^2+\delta^2\|h\|_2^2.
\]
Summing over $k$ yields
\begin{align*}
\sum_{k=1}^K \lambda_k
\min_{Q^k}
\|M_{\cdot,t}^\delta-L_{\cdot,t}^kQ^k\|_F^2
&=
\sum_{k=1}^K \lambda_k
\min_{Q^k}
\|M_{\cdot,t}-L_{\cdot,t}^kQ^k\|_F^2
\\
&\quad
+\delta^2\|h\|_2^2
-2\delta\sum_{k=1}^K\lambda_k\|c_{k,\perp}\|_2
+o(\delta).
\end{align*}
By \eqref{eq:c1perp}, $\|c_{1,\perp}\|_2=\|w\|_2=1>0$. Since $\lambda_1>0$, the term
\[
-2\delta\sum_{k=1}^K\lambda_k\|c_{k,\perp}\|_2
\le
-2\delta\lambda_1
\]
is strictly negative and of order $\delta$, dominating $\delta^2\|h\|_2^2$ for sufficiently small $\delta>0$. This gives
\[
\sum_{k=1}^K \lambda_k
\min_{Q^k}
\|M_{\cdot,t}^\delta-L_{\cdot,t}^kQ^k\|_F^2
<
\sum_{k=1}^K \lambda_k
\min_{Q^k}
\|M_{\cdot,t}-L_{\cdot,t}^kQ^k\|_F^2,
\]
contradicting the minimality of $M_{\cdot,t}$.

Therefore every block column $M_{\cdot,t}$ has full column rank $d$. Equivalently,
\[
(M^\intercal M)_{t,t}=M_{\cdot,t}^\intercal M_{\cdot,t}
\]
is positive definite for every $t=1,\ldots,T$. Hence $M\in\mathscr L^{\mathrm{reg}}(T,d)$.
\end{proof}

\begin{corollary}[Conditional uniqueness]\label{cor:conditional.uniqueness}
Assume that, for every \(t\in[T]\), the local Bures--Wasserstein barycenter set
\[
\argmin_{A\in\mathscr S_+(Td)}
\sum_{k=1}^K\lambda_k \mathrm{dist}_{\BW}^2(A,A_t^k)
\]
is a singleton \(\{\Sigma_t^\star\}\), and that each \(\Sigma_t^\star\) has rank at most \(d\).  Then the restricted factor barycenter is unique modulo \(\mathscr O(T,d)\).  Without uniqueness of these local problems, uniqueness of the restricted barycenter need not hold.
\end{corollary}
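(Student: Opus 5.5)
We first reduce the restricted factor problem to the local problems. By Proposition~\ref{prop:FG-restricted-barycenter}, a factor $M\in\mathscr L(T,d)$ is a restricted minimizer if and only if, for every $t$, its block column $M_{\cdot,t}$ minimizes
\[
Y\mapsto \sum_{k=1}^K\lambda_k\mathrm{dist}_{\BW}^2\bigl(A_t^k,YY^\intercal\bigr)
\]
over admissible block columns $Y\in\bR^{Td\times d}$ with zero blocks before time $t$. We then compare this local restricted problem with the unconstrained local Bures--Wasserstein problem in Theorem~\ref{thm:explicit.value}. That theorem gives a minimizer $\Sigma_t^\star$, which by hypothesis is the unique one and has rank at most $d$. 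We need to know that $\Sigma_t^\star$ can be written as $YY^\intercal$ with $Y$ admissible. For this we use the surjectivity part of Theorem~\ref{thm:explicit.value}: it gives $\Sigma_t^\star=\overline B\overline B^\intercal$ with $\overline B=\sum_k\lambda_k L^k_{\cdot,t}\mathbf M^k$. Every $L^k_{\cdot,t}$ has zero rows before time $t$, so $\Image(\Sigma_t^\star)$ lies in the coordinates from time $t$ onward. Since the rank is at most $d$, we can factor $\Sigma_t^\star=YY^\intercal$ with $Y\in\bR^{Td\times d}$ and $\Image(Y)=\Image(\Sigma_t^\star)$, so $Y$ is admissible.

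With such a $Y$, the restricted local value is at most the unrestricted local value. The reverse inequality always holds, so the two values are equal. It follows that an admissible $M_{\cdot,t}$ is a restricted local minimizer if and only if $M_{\cdot,t}M_{\cdot,t}^\intercal$ attains the unrestricted local minimum, that is, if and only if $M_{\cdot,t}M_{\cdot,t}^\intercal=\Sigma_t^\star$ by the singleton hypothesis.

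Now let $M$ and $M'$ be two restricted minimizers. For each $t$ we then have $M_{\cdot,t}M_{\cdot,t}^\intercal=M'_{\cdot,t}(M'_{\cdot,t})^\intercal$. By Proposition~\ref{prop:decomp}, $\mathrm{dist}_{\mathrm{ABW}}(M,M')=0$. The characterization from \cite{gunasingam2026adapted} recalled after \eqref{eq:Procrustes} then gives $M=M'R$ for some $R\in\mathscr O(T,d)$. Concretely, equal Gram matrices $YY^\intercal=Y'Y'^\intercal$ of $Td\times d$ factors imply $Y=Y'R_t$ with $R_t\in\mathscr O(d)$, via a polar-decomposition argument that extends a partial isometry. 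This proves uniqueness modulo $\mathscr O(T,d)$.

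For the final sentence of the statement it is enough to cite Example~\ref{ex:barycenter.not.in.FG}. There the restricted problem has three minimizers $\frac23L^k$. Their first block columns are not related by sign changes, so they are distinct modulo $\mathscr O(2,1)$; this is consistent with the local Bures--Wasserstein barycenter $\frac14I_2$ having rank $2>d$. Alternatively, Example~\ref{ex:regular-inputs-nonunique-unrestricted} shows non-unique local optimizers even with common noise: $\widebar L^{(\pm)}$ are both restricted minimizers but are not equivalent.

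The main obstacle is the admissibility step: checking that a rank-$\le d$ local barycenter is realized by a block column with the correct zero pattern. This is where the image-containment argument through $\overline B$ is needed.
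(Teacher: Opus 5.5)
Your uniqueness argument is correct and follows the paper's proof. You use the onto correspondence of Theorem~\ref{thm:explicit.value} to place $\Sigma_t^\star$ on the future-row subspace, factor it by an admissible $Td\times d$ column, and conclude from equal column covariances that two minimizers differ by an element of $\mathscr O(T,d)$. You are in fact more explicit than the paper about why every restricted minimizer has $M_{\cdot,t}M_{\cdot,t}^\intercal=\Sigma_t^\star$: the restricted and unrestricted local values coincide, and the singleton hypothesis then applies. The paper leaves this step implicit.

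One correction on the final sentence: Example~\ref{ex:barycenter.not.in.FG} does not illustrate it, because its local problems are uniquely solvable. At $t=1$, any minimizer of $\Phi_1$ has $\mathbf 1^\intercal\mathbf P_1\mathbf 1=0$, hence $\mathbf P_1\mathbf 1=0$. This forces every off-diagonal entry to equal $-\tfrac12$, so $\tfrac14 I_2$ is the unique local barycenter. At $t=2$, all $A_2^k=0$, so the local barycenter is $0$. What fails in that example is the rank hypothesis, not uniqueness; it shows that the rank assumption cannot be dropped. Your alternative citation, Example~\ref{ex:regular-inputs-nonunique-unrestricted}, is the right one. There the time-one local barycenter set is $\{\diag(\tfrac{1+\rho}{2},\tfrac{1-\rho}{2}):\rho\in[-1,1]\}$, and its rank-one members $\rho=\pm1$ produce the two inequivalent restricted minimizers $\widebar L^{(\pm)}$.
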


\begin{proof}
By the onto correspondence in Theorem~\ref{thm:explicit.value}, each $\Sigma_t^\star$ is supported on the admissible future-row subspace, because every input column covariance $A_t^k$ is supported there. Since $\rank(\Sigma_t^\star)\le d$, choose a factor $\overline L_{\cdot,t}\in\bR^{Td\times d}$ with zero block rows before time $t$ and $\overline L_{\cdot,t}\overline L_{\cdot,t}^{\intercal}=\Sigma_t^\star$, and assemble these columns into a block-lower triangular factor $\overline L$. Proposition~\ref{prop:decomp} shows that $\overline L$ minimizes the restricted objective. If $L$ and $M$ have the same column covariances $L_{\cdot,t}L_{\cdot,t}^{\intercal}=M_{\cdot,t}M_{\cdot,t}^{\intercal}$ for all $t$, then for each $t$ there is $Q_t\in\mathscr O(d)$ such that $L_{\cdot,t}=M_{\cdot,t}Q_t$; hence $L=MQ$ with $Q=\diag(Q_1,\ldots,Q_T)$. This gives uniqueness modulo $\mathscr O(T,d)$.
\end{proof}

\begin{remark}
By Theorem~\ref{thm:explicit.value}, the local Bures--Wasserstein barycenter sets appearing in Corollary~\ref{cor:conditional.uniqueness} are exactly the minimizers of the unrestricted local problems \eqref{eq:local.BW.barycenter.identity}.  The rank assumption of the corollary is therefore precisely Condition~\ref{cond:common-noise.input} combined with uniqueness of the local barycenters.
\end{remark}

\section{Computation of adapted barycenters}
\label{sec:computation}
In this section we turn the structural results of Sections~\ref{sec:unrestricted-gaussian-barycenters} and~\ref{sec:FG-common-noise} into algorithms.  By \eqref{eq:unrestricted.value}, the mean contribution is computed separately, and we work with the centered inputs $\bG^{L^k}$ throughout.  We first recall the alternating Procrustes iteration for the restricted problem, which is based on the fixed-point condition \eqref{eq:FPE}.  We then show that the unrestricted local problems in \eqref{eq:unrestricted.value} are semidefinite programs, derive an optimality certificate from conic duality, and extend the restricted iteration to an alternating algorithm for the unrestricted problem in which the orthogonal alignments are replaced by rectangular matrices with orthonormal rows.
\subsection{The restricted problem: a Procrustes fixed-point iteration}
\label{sec:computation.restricted}

The fixed-point condition \eqref{eq:FPE} of Proposition~\ref{thm:characterization} suggests an alternating minimization scheme for the restricted factor problem \eqref{eq:restricted.ABW.factor.problem}: given a candidate factor, align each input to it by solving the Procrustes problems defining $\mathscr Q(\cdot\,,\cdot)$, and then update the candidate to the weighted average of the aligned inputs.  By \eqref{eq:Procrustes}, the alignment step decouples over the time-diagonal blocks and is solved by singular value decompositions: for $A=((L^k)^\intercal L)_{t,t}$ with singular value decomposition $A=USV^\intercal$, the block $(O_k)_t=UV^\intercal$ maximizes $\tr(O_t^\intercal A)$ over $\mathscr O(d)$, and the maximizer is unique whenever $A$ is nonsingular.  The procedure is summarized in Algorithm~\ref{alg:ABW}.

\begin{algorithm}[H]
\caption{Restricted barycenter via alternating Procrustes minimization}\label{alg:ABW}
\begin{algorithmic}[1]
\REQUIRE Factors $L^1, \ldots, L^K \in \mathscr L(T,d)$, weights $\lambda_1, \ldots, \lambda_K > 0$, tolerance $\delta > 0$
\ENSURE A restricted minimizer candidate $\widebar L \in \mathscr L(T,d)$
\STATE Initialize $L^{(0)} \leftarrow \sum_{k=1}^K \lambda_k L^k$, $j \leftarrow 0$
\REPEAT
    \FOR{$k = 1, \ldots, K$}
        \FOR{$t = 1, \ldots, T$}
            \STATE Compute the SVD $((L^k)^\intercal L^{(j)})_{t,t} = U_t^k S_t^k (V_t^k)^\intercal$
            \STATE Set $(Q_t^k)^{(j)}\leftarrow U_t^k (V_t^k)^\intercal$
        \ENDFOR
    \ENDFOR
    \STATE Update $L^{(j+1)} \leftarrow \sum_{k=1}^K \lambda_k L^k (Q^k)^{(j)}$, $j \leftarrow j + 1$
\UNTIL{$\|L^{(j)} - L^{(j-1)}\|_F < \delta$}
\RETURN $\widebar L \leftarrow L^{(j)}$
\end{algorithmic}
\end{algorithm}

Each iteration of Algorithm~\ref{alg:ABW} consists of two exact block minimizations of
\[
F(L;Q^1,\ldots,Q^K):=\sum_{k=1}^K\lambda_k\|L-L^kQ^k\|_F^2.
\]
Let $G(L):=\min_{Q^1,\ldots,Q^K}F(L;Q^1,\ldots,Q^K)$ denote the restricted factor objective. Since the alignment step is exact and $L^{(j+1)}$ is the weighted average of the aligned factors,
\[
G(L^{(j)})-G(L^{(j+1)})
\ge F(L^{(j)};Q^{(j)})-F(L^{(j+1)};Q^{(j)})
=\|L^{(j+1)}-L^{(j)}\|_F^2.
\]
Thus the objective values are non-increasing and convergent, and the increments vanish. The iterates are bounded because each update is a weighted average of fixed factors multiplied by orthogonal matrices. For any convergent subsequence, compactness of the orthogonal groups gives a further subsequence of alignments converging to Procrustes optimizers at the limit; the limit then satisfies \eqref{eq:FPE}. Such a fixed point need not be a global restricted minimizer, even when the global restricted barycenter is unique.

Accordingly, an output of Algorithm~\ref{alg:ABW} represents an unrestricted adapted barycenter only after global optimality for the restricted problem has been verified and its associated common-noise matrices are shown to minimize each $\Phi_t$ over all of $\mathscr P_+(K,d)$. When the common-noise condition fails, as in Example~\ref{ex:barycenter.not.in.FG}, no restricted method can attain the unrestricted value, and a different scheme is needed. This is the subject of the remainder of this section.

\subsection{The unrestricted problem as a semidefinite program}
\label{sec:computation.unrestricted}

The key observation is that the one-step trace functional $\Phi_t$ from \eqref{eq:Phi_t} is \emph{affine} in $\mathbf P_t$.  Writing
\begin{equation}
\label{eq:sdp.cost.matrix}
\mathbf C_t:=\Lambda\mathbf L_{\cdot,t}^\intercal\mathbf L_{\cdot,t}\Lambda\in\mathscr S_+(Kd),
\end{equation}
we have $\Phi_t(\mathbf P_t)=\sum_{k=1}^K\lambda_k\|L_{\cdot,t}^k\|_F^2-\tr(\mathbf C_t\mathbf P_t)$.  Since $\mathscr{P}_+(K,d)$ is the intersection of the positive semidefinite cone with the affine constraints $\mathbf P^{k,k}=I_d$, the local problem
\begin{equation}
\label{eq:local.sdp}
\min_{\mathbf P_t\in\mathscr{P}_+(K,d)}\Phi_t(\mathbf P_t)
=
\sum_{k=1}^K\lambda_k\|L_{\cdot,t}^k\|_F^2
-
\max_{\mathbf P_t\in\mathscr{P}_+(K,d)}\tr(\mathbf C_t\mathbf P_t)
\end{equation}
is a semidefinite program, which can be solved to arbitrary accuracy in polynomial time by interior-point methods; see \cite{BV04}.  In view of \eqref{eq:local.restricted.factor.problem} and Lemma~\ref{lem:Pk-rank-common-noise}, the restricted local problem is the same maximization with the additional constraint $\rank(\mathbf P_t)=d$. In this language, Condition~\ref{cond:common-noise.input} states precisely that the relaxation is tight at every time $t$.

\medskip
The unrestricted local problem also admits a variational form that exactly parallels \eqref{eq:local.restricted.factor.problem}, with the orthogonal matrices $ Q^k\in\mathscr O(d)$ replaced by rectangular matrices with orthonormal rows.

\begin{proposition}
\label{prop:sdp.factorized}
For every $t\in[T]$,
\begin{equation}
\label{eq:local.sdp.factorized}
\min_{\mathbf P_t\in\mathscr{P}_+(K,d)}\Phi_t(\mathbf P_t)
=
\min
\left\{
\sum_{k=1}^K\lambda_k\bigl\|L_{\cdot,t}^k\mathbf M^k-\mathbf Y\bigr\|_F^2
\right\},
\end{equation}
where the minimum on the right-hand side is taken over
\[
\mathbf Y\in\bR^{Td\times Kd},\qquad
\mathbf M^k\in\bR^{d\times Kd},\qquad
\mathbf M^k(\mathbf M^k)^\intercal=I_d,\qquad k\in[K].
\]
The two problems are related by $\mathbf P_t=\mathbf M\mathbf M^\intercal$, where $\mathbf M\in\bR^{Kd\times Kd}$ stacks the block rows $\mathbf M^1,\ldots,\mathbf M^K$; this correspondence is onto $\mathscr{P}_+(K,d)$.  For fixed $\mathbf M^1,\ldots,\mathbf M^K$, the inner minimum over $\mathbf Y$ is attained uniquely at
\begin{equation}
\label{eq:sdp.Y.update}
\mathbf Y=\sum_{k=1}^K\lambda_kL_{\cdot,t}^k\mathbf M^k=\mathbf L_{\cdot,t}\Lambda\mathbf M,
\end{equation}
which has zero block rows before time $t$.  Requiring in addition that the row spaces of $\mathbf M^1,\ldots,\mathbf M^K$ coincide with a fixed $d$-dimensional subspace, equivalently $\rank(\mathbf P_t)=d$, recovers the restricted local problem \eqref{eq:local.restricted.factor.problem}.
\end{proposition}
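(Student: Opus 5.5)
The plan is to reuse the algebra of Step~2 in the proof of Theorem~\ref{thm:explicit.value}, now with $\mathbf Y$ as a free variable instead of $\Sigma^{1/2}$.

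First, the correspondence. If $\mathbf M$ has block rows $\mathbf M^k$ with $\mathbf M^k(\mathbf M^k)^\intercal=I_d$, then $\mathbf P_t:=\mathbf M\mathbf M^\intercal$ is positive semidefinite and its diagonal blocks are $I_d$, so $\mathbf P_t\in\mathscr P_+(K,d)$. Conversely, any $\mathbf P_t\in\mathscr P_+(K,d)$ has a square factor, for instance $\mathbf M=\mathbf P_t^{1/2}$. The block rows of this factor automatically satisfy $\mathbf M^k(\mathbf M^k)^\intercal=\mathbf P_t^{k,k}=I_d$. Hence the correspondence is onto.

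Second, fix $\mathbf M$ and minimize over $\mathbf Y$. Set $\overline B:=\sum_k\lambda_kL_{\cdot,t}^k\mathbf M^k=\mathbf L_{\cdot,t}\Lambda\mathbf M$. The weighted bias--variance identity, using $\sum_k\lambda_k=1$, gives
\[
\sum_k\lambda_k\|L_{\cdot,t}^k\mathbf M^k-\mathbf Y\|_F^2=\sum_k\lambda_k\|L_{\cdot,t}^k\mathbf M^k-\overline B\|_F^2+\|\mathbf Y-\overline B\|_F^2.
\]
This shows the minimum over $\mathbf Y$ is attained uniquely at $\mathbf Y=\overline B$, which is \eqref{eq:sdp.Y.update}.

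Next I evaluate the first term. Since $\mathbf M^k(\mathbf M^k)^\intercal=I_d$, we have $\|L_{\cdot,t}^k\mathbf M^k\|_F^2=\|L_{\cdot,t}^k\|_F^2$. Also
\[
\|\overline B\|_F^2=\tr(\Lambda\mathbf L_{\cdot,t}^\intercal\mathbf L_{\cdot,t}\Lambda\mathbf M\mathbf M^\intercal).
\]
So the first term equals $\sum_k\lambda_k\|L_{\cdot,t}^k\|_F^2-\|\overline B\|_F^2=\Phi_t(\mathbf M\mathbf M^\intercal)$ by \eqref{eqn:trace.functional}. Minimizing over $\mathbf M$ and using surjectivity gives \eqref{eq:local.sdp.factorized}. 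Attainment follows from attainment on the left-hand side (Theorem~\ref{thm:gaussian.multicausal.optimizer}).

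Zero block rows of $\mathbf Y$ before time $t$ is immediate, because every $L_{\cdot,t}^k$ vanishes there.

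For the rank-$d$ statement, I would argue the two directions separately.

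\textbf{Common row space gives rank $d$.} Suppose all $\mathbf M^k$ have row space equal to a fixed $d$-dimensional $V\subset\bR^{Kd}$. Take an orthonormal basis matrix $\mathbf W\in\bR^{d\times Kd}$ of $V$, so $\mathbf M^k=Q^k\mathbf W$. Here $Q^k\in\mathscr O(d)$ because $\mathbf M^k(\mathbf M^k)^\intercal=I_d$. Then $\mathbf P_t^{k,\ell}=Q^k(Q^\ell)^\intercal$, which has rank $d$ by Lemma~\ref{lem:Pk-rank-common-noise}. Moreover, the objective equals
\[
\sum_k\lambda_k\|L_{\cdot,t}^kQ^k-\mathbf Y\mathbf W^\intercal\|_F^2+\|\mathbf Y(I-\mathbf W^\intercal\mathbf W)\|_F^2.
\]
Minimizing this over $\mathbf Y$ reproduces \eqref{eq:local.restricted.factor.problem}.

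\textbf{Rank $d$ gives common row spaces.} Conversely, suppose $\rank(\mathbf M\mathbf M^\intercal)=d$, so $\rank\mathbf M=d$. Each $\mathbf M^k$ has rank $d$ and its row space lies in the row space of $\mathbf M$, which has dimension $d$. Hence all row spaces coincide.

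The main obstacle is keeping these equivalences precise when the square factor $\mathbf M$ is non-unique. I would handle this by always working through $\mathbf P_t$, on which the objective depends solely, as shown in the second step.
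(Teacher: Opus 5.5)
Your proposal is correct and follows essentially the same route as the paper: you use the onto correspondence via $\mathbf M=\mathbf P_t^{1/2}$, complete the square in $\mathbf Y$ to identify the inner minimum with $\Phi_t(\mathbf M\mathbf M^\intercal)$, and write $\mathbf M^k=Q^k\mathbf W$ in the common-row-space case. The only minor variation is in the converse of the rank equivalence. You argue directly at the level of factors ($\rank\mathbf M=d$ forces all block-row spaces to coincide), whereas the paper invokes Lemma~\ref{lem:Pk-rank-common-noise} to show that every rank-$d$ element is realized by a common-row-space factor; either argument suffices.
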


\begin{proof}
Let $\mathbf M^1,\ldots,\mathbf M^K$ have orthonormal rows and set $\mathbf P_t:=\mathbf M\mathbf M^\intercal$, so that $\mathbf P_t^{k,\ell}=\mathbf M^k(\mathbf M^\ell)^\intercal$ and $\mathbf P_t^{k,k}=I_d$; thus $\mathbf P_t\in\mathscr{P}_+(K,d)$.  Conversely, if $\mathbf P_t\in\mathscr{P}_+(K,d)$ and $\mathbf M\in\bR^{Kd\times Kd}$ is any matrix with $\mathbf M\mathbf M^\intercal=\mathbf P_t$, for instance $\mathbf M=\mathbf P_t^{1/2}$, then its block rows satisfy $\mathbf M^k(\mathbf M^k)^\intercal=\mathbf P_t^{k,k}=I_d$; hence the correspondence is onto.  For fixed $\mathbf M^1,\ldots,\mathbf M^K$, completing the square gives
\[
\sum_{k=1}^K\lambda_k\|L_{\cdot,t}^k\mathbf M^k-\mathbf Y\|_F^2
=
\Bigl\|\mathbf Y-\sum_{k=1}^K\lambda_kL_{\cdot,t}^k\mathbf M^k\Bigr\|_F^2
+
\sum_{k=1}^K\lambda_k\|L_{\cdot,t}^k\|_F^2
-
\Bigl\|\sum_{k=1}^K\lambda_kL_{\cdot,t}^k\mathbf M^k\Bigr\|_F^2,
\]
where we used $\|L_{\cdot,t}^k\mathbf M^k\|_F^2=\tr(L_{\cdot,t}^k\mathbf M^k(\mathbf M^k)^\intercal(L_{\cdot,t}^k)^\intercal)=\|L_{\cdot,t}^k\|_F^2$.  The first term vanishes exactly at \eqref{eq:sdp.Y.update}, and by \eqref{eq:Phi_t.expanded},
\[
\Bigl\|\sum_{k=1}^K\lambda_kL_{\cdot,t}^k\mathbf M^k\Bigr\|_F^2
=
\sum_{k,\ell=1}^K\lambda_k\lambda_\ell
\tr\bigl((L_{\cdot,t}^k)^\intercal L_{\cdot,t}^\ell\,\mathbf M^\ell(\mathbf M^k)^\intercal\bigr)
=\tr(\mathbf C_t\mathbf P_t).
\]
Hence the inner minimum equals $\Phi_t(\mathbf P_t)$, and minimizing over the $\mathbf M^k$ is equivalent to minimizing $\Phi_t$ over $\mathscr{P}_+(K,d)$ by the onto correspondence.  Since each $L_{\cdot,t}^k$ has zero block rows before time $t$, so does the optimal $\mathbf Y$.  Finally, if the row spaces of all $\mathbf M^k$ coincide with a $d$-dimensional subspace $V\subseteq\bR^{Kd}$, choose an orthonormal basis of $V$ and write $\mathbf M^k=\mathbf Q^k B$ with $\mathbf Q^k\in\mathscr O(d)$ and $B\in\bR^{d\times Kd}$ the basis matrix; then $\mathbf P_t^{k,\ell}=\mathbf Q^k(\mathbf Q^\ell)^\intercal$ and $\rank(\mathbf P_t)=d$, and conversely every rank-$d$ element of $\mathscr{P}_+(K,d)$ arises this way by Lemma~\ref{lem:Pk-rank-common-noise}.  Substituting into \eqref{eq:local.sdp.factorized} and absorbing $B$ into $\mathbf Y$ recovers \eqref{eq:local.restricted.factor.problem}.
\end{proof}

The convexity of \eqref{eq:local.sdp} yields a global optimality certificate; this should be contrasted with the restricted problem, where no such certificate is available due to the nonconvex rank constraint.

\begin{proposition}[Optimality certificate]
\label{prop:sdp.certificate}
Let $t\in[T]$ and $\mathbf P_t\in\mathscr{P}_+(K,d)$. Suppose there exist symmetric matrices $Z_t^1,\ldots,Z_t^K\in\bR^{d\times d}$ such that
\begin{equation}
\label{eq:sdp.certificate}
\mathbf S_t:=\diag(Z_t^1,\ldots,Z_t^K)-\mathbf C_t\succeq0
\qquad\text{and}\qquad
\mathbf S_t\mathbf P_t=0.
\end{equation}
Then $\mathbf P_t$ minimizes $\Phi_t$ over $\mathscr{P}_+(K,d)$. Conversely, every minimizer of $\Phi_t$ over $\mathscr{P}_+(K,d)$ admits a certificate \eqref{eq:sdp.certificate}.
\end{proposition}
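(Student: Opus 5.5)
The plan is to view \eqref{eq:local.sdp} as the primal semidefinite program
\[
\max\bigl\{\tr(\mathbf C_t\mathbf P):\ \mathbf P\succeq0,\ \mathbf P^{k,k}=I_d,\ k\in[K]\bigr\},
\]
and to use standard conic duality. Attach a symmetric multiplier $Z^k\in\bR^{d\times d}$ to each affine constraint $\mathbf P^{k,k}=I_d$ and write $\mathbf Z:=\diag(Z^1,\ldots,Z^K)$. This gives the dual program
\[
\min\Bigl\{\sum_{k=1}^K\tr(Z^k):\ \mathbf Z-\mathbf C_t\succeq0\Bigr\}.
\]
Since $\Phi_t(\mathbf P)=\sum_k\lambda_k\|L_{\cdot,t}^k\|_F^2-\tr(\mathbf C_t\mathbf P)$, minimizing $\Phi_t$ over $\mathscr P_+(K,d)$ is the same as maximizing $\tr(\mathbf C_t\mathbf P)$ there.

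\emph{Sufficiency (weak duality).} Take any $\mathbf P'\in\mathscr P_+(K,d)$. Because $\mathbf P'^{k,k}=I_d$, we have $\tr(\mathbf Z\mathbf P')=\sum_k\tr(Z^k)$. Hence
\[
\sum_k\tr(Z^k)-\tr(\mathbf C_t\mathbf P')=\tr(\mathbf S_t\mathbf P')\ge0,
\]
since the trace of a product of two positive semidefinite matrices is nonnegative. At $\mathbf P'=\mathbf P_t$ the complementarity condition $\mathbf S_t\mathbf P_t=0$ makes this difference vanish. Therefore $\tr(\mathbf C_t\mathbf P_t)\ge\tr(\mathbf C_t\mathbf P')$ for every feasible $\mathbf P'$, which means $\mathbf P_t$ minimizes $\Phi_t$.

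\emph{Necessity (strong duality).} Here I need zero duality gap and attainment in the dual. Two Slater-type facts should give this.
\begin{itemize}
\item The primal is strictly feasible: $\mathbf P=I_{Kd}\in\mathscr P_+(K,d)$ is positive definite.
\item The dual is strictly feasible: $Z^k=cI_d$ with $c>\|\mathbf C_t\|_{2\to2}$ gives $\mathbf Z-\mathbf C_t\succ0$.
\end{itemize}
By the conic strong duality theorem (e.g.\ \cite{BV04}), both optima are then attained and equal. The primal is already attained by compactness, as in Theorem~\ref{thm:gaussian.multicausal.optimizer}.

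Now let $\mathbf P_t$ be any minimizer of $\Phi_t$ and let $\mathbf Z$ be a dual optimizer. Equality of the optimal values gives $\tr(\mathbf S_t\mathbf P_t)=0$ with $\mathbf S_t,\mathbf P_t\succeq0$. Writing
\[
0=\tr(\mathbf S_t\mathbf P_t)=\|\mathbf S_t^{1/2}\mathbf P_t^{1/2}\|_F^2,
\]
we get $\mathbf S_t^{1/2}\mathbf P_t^{1/2}=0$, and hence $\mathbf S_t\mathbf P_t=0$. So the dual optimizer is a certificate of the form \eqref{eq:sdp.certificate}.

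The main obstacle is the strong-duality step: attainment of the dual optimum and the absence of a duality gap. With strict feasibility on both sides this follows from the standard theorem. If a self-contained argument is preferred instead, I would first show the dual feasible set intersected with a sublevel set is compact, using the bound $\tr(Z^k)\ge$ the corresponding diagonal-block trace of $\mathbf C_t$, and then apply a separating hyperplane argument.
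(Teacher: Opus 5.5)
Your proposal is correct and follows essentially the same route as the paper: for sufficiency, weak duality via $\tr(\mathbf S_t\mathbf P_t')\ge0$ combined with the diagonal-block constraint; for the converse, Slater's condition (strict feasibility of $I_{Kd}$), which gives strong duality with dual attainment. You additionally spell out dual strict feasibility, which is not needed, and the implication $\tr(\mathbf S_t\mathbf P_t)=0\Rightarrow\mathbf S_t\mathbf P_t=0$, which the paper leaves to the cited complementary slackness result.
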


\begin{proof}
Suppose \eqref{eq:sdp.certificate} holds and let $\mathbf P_t'\in\mathscr{P}_+(K,d)$ be arbitrary. Since the diagonal blocks of every element of $\mathscr{P}_+(K,d)$ are $I_d$,
\[
\tr(\mathbf C_t\mathbf P_t')
=\tr\bigl(\diag(Z_t^1,\ldots,Z_t^K)\mathbf P_t'\bigr)-\tr(\mathbf S_t\mathbf P_t')
=\sum_{k=1}^K\tr(Z_t^k)-\tr(\mathbf S_t\mathbf P_t')
\le\sum_{k=1}^K\tr(Z_t^k),
\]
because $\tr(\mathbf S_t\mathbf P_t')\ge0$ for positive semidefinite $\mathbf S_t$ and $\mathbf P_t'$. On the other hand, $\mathbf S_t\mathbf P_t=0$ gives $\tr(\mathbf C_t\mathbf P_t)=\sum_k\tr(Z_t^k)$. Hence $\tr(\mathbf C_t\mathbf P_t)\ge\tr(\mathbf C_t\mathbf P_t')$ for all feasible $\mathbf P_t'$, so $\mathbf P_t$ minimizes $\Phi_t$. For the converse, note that \eqref{eq:local.sdp} is a semidefinite program whose feasible set contains the positive definite matrix $I_{Kd}$. By Slater's condition, strong conic duality holds, the dual is attained, and complementary slackness is exactly \eqref{eq:sdp.certificate}; see \cite[Section~5.9]{BV04}.
\end{proof}

\begin{remark}[Relation to classical Bures--Wasserstein barycenters]
\label{rem:classical.BW.solvers}
By Theorem~\ref{thm:explicit.value}, solving the local semidefinite program \eqref{eq:local.sdp} is equivalent to computing a classical Bures--Wasserstein barycenter of the column covariances $A_t^1,\ldots,A_t^K$, and the optimal value is available in the explicit form \eqref{eq:explicit.local.value} once any barycenter covariance $\widebar\Sigma_t$ is known.  Note, however, that here $\rank(A_t^k)\le d$, so for $t<T$ all marginals are degenerate and the classical fixed-point theory for the barycenter equation \cite{AC11,alvarez2016fixed}, which requires nonsingular covariances, does not apply directly; the semidefinite formulation \eqref{eq:local.sdp} and Algorithm~\ref{alg:unrestricted} remain applicable without any nondegeneracy assumption.  Conversely, at a certified fixed point of Algorithm~\ref{alg:unrestricted} the matrix $\mathbf Y_t\mathbf Y_t^\intercal$ is a Bures--Wasserstein barycenter covariance of $A_t^1,\ldots,A_t^K$ by Theorem~\ref{thm:explicit.value}, so the algorithm doubles as a method for computing classical barycenters of degenerate Gaussian measures.
\end{remark}

\subsection{An alternating algorithm for the unrestricted problem}
\label{sec:computation.alternating}

While \eqref{eq:local.sdp} can be handed to a general-purpose semidefinite solver, the factorized form \eqref{eq:local.sdp.factorized} leads to a lightweight alternating scheme that exactly parallels Algorithm~\ref{alg:ABW}. For fixed $\mathbf M^1,\ldots,\mathbf M^K$, the optimal $\mathbf Y$ is the weighted average \eqref{eq:sdp.Y.update}. For fixed $\mathbf Y$, the problems over the individual $\mathbf M^k$ decouple, and
\[
\argmin_{\mathbf M\mathbf M^\intercal=I_d}\|L_{\cdot,t}^k\mathbf M-\mathbf Y\|_F^2
=
\argmax_{\mathbf M\mathbf M^\intercal=I_d}\tr\bigl(\mathbf M^\intercal(L_{\cdot,t}^k)^\intercal\mathbf Y\bigr).
\]

If $(L_{\cdot,t}^k)^\intercal\mathbf Y=USV^\intercal$ is a thin singular value decomposition with $U\in\mathscr O(d)$, $V\in\bR^{Kd\times d}$, and $V^\intercal V=I_d$, completing zero singular directions orthonormally when necessary, then $\mathbf M=UV^\intercal$ is a maximizer. It is unique whenever $(L_{\cdot,t}^k)^\intercal\mathbf Y$ has full row rank $d$. Note that the iterate $\mathbf Y$ is precisely the candidate block-column of the ensemble barycenter factor: at a global optimum, $\mathbf Y=\mathbf L_{\cdot,t}\Lambda\mathbf M=\overline{\mathbf L}_{\cdot,t}$ as in \eqref{eq:extended.barycenter.factor}, where $\mathbf M$ is a general factor of $\mathbf P_t^\star$ rather than the symmetric square root. By Lemma~\ref{lem:ensemble}, the resulting ensemble processes are $\AW_2$-equivalent. The procedure is summarized in Algorithm~\ref{alg:unrestricted}.

\begin{algorithm}[H]
\caption{Unrestricted barycenter via alternating minimization}\label{alg:unrestricted}
\begin{algorithmic}[1]
\REQUIRE Factors $L^1, \ldots, L^K \in \mathscr L(T,d)$, weights $\lambda_1, \ldots, \lambda_K > 0$, tolerance $\delta > 0$
\ENSURE Local optimizer candidates $\mathbf P_t \in \mathscr{P}_+(K,d)$ and an ensemble factor $\overline{\mathbf L} \in \mathscr L_K(T,d)$
\FOR{$t = 1, \ldots, T$}
    \STATE Initialize $\mathbf M_t^{k,(0)} \leftarrow$ the $k$-th block row of $I_{Kd}$ for $k\in[K]$, $\mathbf Y_t^{(0)} \leftarrow \sum_{k=1}^K \lambda_k L_{\cdot,t}^k \mathbf M_t^{k,(0)}$, $j \leftarrow 0$
    \REPEAT
        \FOR{$k = 1, \ldots, K$}
            \STATE Compute a thin SVD $(L_{\cdot,t}^k)^\intercal \mathbf Y_t^{(j)} = U_t^k S_t^k (V_t^k)^\intercal$, completing zero singular directions orthonormally
            \STATE Set $\mathbf M_t^{k,(j+1)} \leftarrow U_t^k (V_t^k)^\intercal$
        \ENDFOR
        \STATE Update $\mathbf Y_t^{(j+1)} \leftarrow \sum_{k=1}^K \lambda_k L_{\cdot,t}^k \mathbf M_t^{k,(j+1)}$, $j \leftarrow j + 1$
    \UNTIL{$\|\mathbf Y_t^{(j)} - \mathbf Y_t^{(j-1)}\|_F < \delta$}
    \STATE Set $\mathbf P_t \leftarrow \mathbf M_t\mathbf M_t^\intercal$ and $\overline{\mathbf L}_{\cdot,t} \leftarrow \mathbf Y_t^{(j)}$, where $\mathbf M_t$ stacks $\mathbf M_t^{1,(j)},\ldots,\mathbf M_t^{K,(j)}$
\ENDFOR
\RETURN $(\mathbf P_1,\ldots,\mathbf P_T)$ and $\overline{\mathbf L}$
\end{algorithmic}
\end{algorithm}
As for Algorithm~\ref{alg:ABW}, each sweep of Algorithm~\ref{alg:unrestricted} performs two exact block minimizations of the objective in \eqref{eq:local.sdp.factorized}. Hence the objective values are non-increasing and convergent; the exact quadratic minimization over $\mathbf Y_t$ also implies $\|\mathbf Y_t^{(j+1)}-\mathbf Y_t^{(j)}\|_F\to0$. Compactness of the row-Stiefel factors shows that every convergent subsequence has a further subsequence whose limiting factors and candidate satisfy
\begin{equation}
\label{eq:FPE.unrestricted}
\mathbf Y_t=\sum_{k=1}^K\lambda_kL_{\cdot,t}^k\mathbf M_t^k,
\qquad
\mathbf M_t^k\in\argmax_{\mathbf M(\mathbf M)^\intercal=I_d}
\tr\bigl(\mathbf M^\intercal(L_{\cdot,t}^k)^\intercal\mathbf Y_t\bigr),
\quad k\in[K],
\end{equation}

which is the exact analogue of \eqref{eq:FPE}.  Since the factorized problem is nonconvex, a fixed point need not be a global minimizer.  The next lemma shows, however, that at every fixed point the complementarity condition in \eqref{eq:sdp.certificate} holds automatically for a canonical dual candidate, so that global optimality reduces to a single positive semidefiniteness check. In Figure~\ref{fig:algorithm-convergence}, we apply both algorithms to Example~\ref{ex:barycenter.not.in.FG}. For both the unrestricted and restricted problems the objective values converge to the optimal values, and the algorithms behave similarly.

\begin{figure}[H]
\centering
\includegraphics[width=0.72\textwidth]{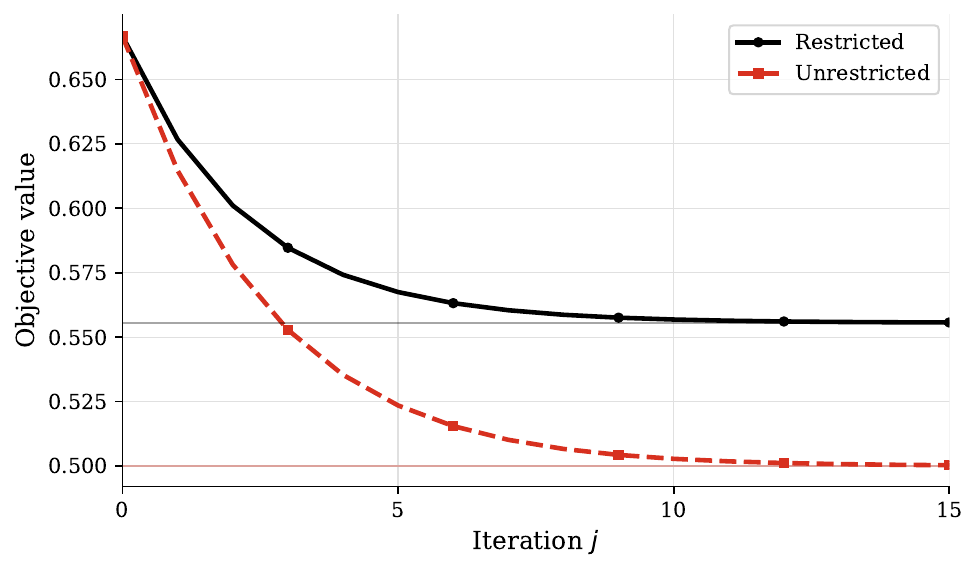}
\caption{Objective value against iteration for under-relaxed versions of the
restricted Procrustes iteration (solid black circles) and the unrestricted
barycenter iteration (dashed red squares), applied to
Example~\ref{ex:barycenter.not.in.FG}. The
two sequences start from the common objective value $2/3$. The restricted
sequence approaches $5/9$, whereas the unrestricted sequence approaches the
strictly smaller value $1/2$.}
\label{fig:algorithm-convergence}
\end{figure}

\begin{lemma}
\label{lem:fixed.point.certificate}
Let $(\mathbf M_t,\mathbf Y_t)$ satisfy \eqref{eq:FPE.unrestricted} and set $\mathbf P_t:=\mathbf M_t\mathbf M_t^\intercal$,
\[
Z_t^k:=\lambda_k(L_{\cdot,t}^k)^\intercal\mathbf Y_t(\mathbf M_t^k)^\intercal,
\qquad k\in[K],
\qquad
\mathbf S_t:=\diag(Z_t^1,\ldots,Z_t^K)-\mathbf C_t.
\]
Then each $Z_t^k$ is symmetric positive semidefinite and $\mathbf S_t\mathbf P_t=0$.  Consequently, if $\mathbf S_t\succeq0$, then $\mathbf P_t$ is a global minimizer of $\Phi_t$ over $\mathscr{P}_+(K,d)$.
\end{lemma}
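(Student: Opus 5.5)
The plan is to verify the two claims about $Z_t^k$ directly from the Procrustes optimality of $\mathbf M_t^k$, then compute $\mathbf S_t\mathbf P_t$ block by block, and finally invoke Proposition~\ref{prop:sdp.certificate} for the consequence.

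\emph{Symmetry and semidefiniteness of $Z_t^k$.} Write $B^k:=(L_{\cdot,t}^k)^\intercal\mathbf Y_t\in\bR^{d\times Kd}$ and take a thin SVD $B^k=USV^\intercal$ with $U\in\mathscr O(d)$ and $V^\intercal V=I_d$. The maximizer in \eqref{eq:FPE.unrestricted} has the form $\mathbf M_t^k=UV^\intercal$, possibly with arbitrary orthonormal completion on zero singular directions. Then
\[
B^k(\mathbf M_t^k)^\intercal=USV^\intercal VU^\intercal=USU^\intercal\succeq0.
\]
Some care is needed for an arbitrary maximizer, not only the SVD-constructed one. Here I would use the standard first-order fact that $\mathbf M$ maximizes $\tr(\mathbf M^\intercal B)$ over row-orthonormal $\mathbf M$ if and only if $\tr(\mathbf M^\intercal B)=\|B\|_*$. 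For such $\mathbf M$, the matrix $B\mathbf M^\intercal$ is a $d\times d$ matrix with $\|B\mathbf M^\intercal\|_*\le\|B\|_*=\tr(B\mathbf M^\intercal)$, because $\|\mathbf M\|_{2\to2}\le1$. The elementary fact that $\tr(A)=\|A\|_*$ forces $A\succeq0$ symmetric then gives $B^k(\mathbf M_t^k)^\intercal\in\mathscr S_+(d)$. Hence $Z_t^k=\lambda_kB^k(\mathbf M_t^k)^\intercal$ is symmetric positive semidefinite. This general-maximizer point is the only mildly delicate step.

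\emph{Complementary slackness.} Let $\mathbf M_t$ stack the block rows $\mathbf M_t^k$, so that $\mathbf P_t=\mathbf M_t\mathbf M_t^\intercal$. It suffices to show $\mathbf S_t\mathbf M_t=0$. The $k$-th block row of $\diag(Z_t^k)\mathbf M_t$ is
\[
Z_t^k\mathbf M_t^k=\lambda_kB^k(\mathbf M_t^k)^\intercal\mathbf M_t^k .
\]
From the SVD form, $(\mathbf M_t^k)^\intercal\mathbf M_t^k=VV^\intercal+(\text{completion part})$ is the orthogonal projection onto the row space of $\mathbf M_t^k$, which contains the row space of $B^k$. Therefore $B^k(\mathbf M_t^k)^\intercal\mathbf M_t^k=B^k$. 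For a general maximizer the same holds: $B^k(\mathbf M_t^k)^\intercal$ symmetric PSD with trace $\|B^k\|_*$ forces the rows of $B^k$ to lie in the row space of $\mathbf M_t^k$. So the $k$-th block row equals $\lambda_k(L_{\cdot,t}^k)^\intercal\mathbf Y_t$.

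On the other side, $\mathbf C_t\mathbf M_t=\Lambda\mathbf L_{\cdot,t}^\intercal\mathbf L_{\cdot,t}\Lambda\mathbf M_t=\Lambda\mathbf L_{\cdot,t}^\intercal\mathbf Y_t$ by \eqref{eq:sdp.Y.update}. Its $k$-th block row is also $\lambda_k(L_{\cdot,t}^k)^\intercal\mathbf Y_t$. Hence $\mathbf S_t\mathbf M_t=0$, and so $\mathbf S_t\mathbf P_t=0$.

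\emph{Conclusion.} If in addition $\mathbf S_t\succeq0$, then the pair $(Z_t^k)_k$, $\mathbf P_t$ satisfies \eqref{eq:sdp.certificate}, and Proposition~\ref{prop:sdp.certificate} yields that $\mathbf P_t$ globally minimizes $\Phi_t$ over $\mathscr P_+(K,d)$.
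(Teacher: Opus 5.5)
Your proof is correct and follows essentially the same route as the paper. The argument rests on the polar optimality identities $C^k(\mathbf M_t^k)^\intercal\succeq0$ and $C^k(\mathbf M_t^k)^\intercal\mathbf M_t^k=C^k$, obtained from the equality case of the trace--nuclear-norm inequality, then a block computation using $\mathbf Y_t=\mathbf L_{\cdot,t}\Lambda\mathbf M_t$, and finally Proposition~\ref{prop:sdp.certificate}. The differences are only cosmetic: you check $\mathbf S_t\mathbf M_t=0$, which implies $\mathbf S_t\mathbf P_t=0$, instead of comparing the $(k,\ell)$-blocks of $\mathbf S_t\mathbf P_t$ directly, and you justify the polar identities for an arbitrary maximizer, which the paper only asserts.
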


\begin{proof}
Write $C^k:=(L_{\cdot,t}^k)^\intercal\mathbf Y_t$. Since $\mathbf M_t^k$ is a maximizer in \eqref{eq:FPE.unrestricted}, the equality conditions in the trace--nuclear-norm inequality give the polar optimality identities
\[
C^k(\mathbf M_t^k)^\intercal\succeq0,
\qquad
C^k(\mathbf M_t^k)^\intercal\mathbf M_t^k=C^k.
\]
Thus $Z_t^k=\lambda_kC^k(\mathbf M_t^k)^\intercal$ is symmetric positive semidefinite. Using $\mathbf Y_t=\mathbf L_{\cdot,t}\Lambda\mathbf M_t$, the $(k,\ell)$-block of $\mathbf C_t\mathbf P_t$ is
\[
(\mathbf C_t\mathbf P_t)^{k,\ell}
=\sum_{m=1}^K\lambda_k\lambda_m(L_{\cdot,t}^k)^\intercal L_{\cdot,t}^m\mathbf M_t^m(\mathbf M_t^\ell)^\intercal
=\lambda_k(L_{\cdot,t}^k)^\intercal\mathbf Y_t(\mathbf M_t^\ell)^\intercal
=\lambda_kC^k(\mathbf M_t^\ell)^\intercal,
\]
while the $(k,\ell)$-block of $\diag(Z_t^1,\ldots,Z_t^K)\mathbf P_t$ is
\[
Z_t^k\mathbf P_t^{k,\ell}
=\lambda_kC^k(\mathbf M_t^k)^\intercal\mathbf M_t^k(\mathbf M_t^\ell)^\intercal
=\lambda_kC^k(\mathbf M_t^\ell)^\intercal.
\]
Subtracting the two displays gives $\mathbf S_t\mathbf P_t=0$.  The final assertion follows from Proposition~\ref{prop:sdp.certificate}.
\end{proof}

\begin{remark}
\label{rem:invariant.subspace}
If the row spaces of all $\mathbf M_t^{k,(j)}$ are contained in a common $d$-dimensional subspace $V\subseteq\bR^{Kd}$, then the rows of $\mathbf Y_t^{(j)}$ and of $(L_{\cdot,t}^k)^\intercal\mathbf Y_t^{(j)}$ also lie in $V$. The updated factors remain in $V$ provided that any zero-singular-vector completions in the SVD step are chosen inside $V$; this tie-breaking rule is essential when the cross-matrix is rank deficient. Under this rule, identifying $V$ with $\bR^d$ makes the restricted iteration exactly the time-$t$ column update of Algorithm~\ref{alg:ABW}. To explore the enlarged search space, Algorithm~\ref{alg:unrestricted} instead initializes $\mathbf M_t^{(0)}=I_{Kd}$, equivalently $\mathbf P_t^{(0)}=I_{Kd}$, the strictly feasible center of $\mathscr P_+(K,d)$.
\end{remark}

\begin{remark}
\label{rem:assembling.barycenter}
Suppose the matrices $\mathbf P_1,\ldots,\mathbf P_T$ returned by Algorithm~\ref{alg:unrestricted} are certified by Lemma~\ref{lem:fixed.point.certificate}, so that each $\mathbf P_t$ minimizes $\Phi_t$. By Theorem~\ref{thm:gaussian.multicausal.optimizer} and Lemma~\ref{lem:ensemble}, the ensemble filtered Gaussian process $\bG_K^{\widebar a,\widebar{\mathbf L}}$ with $\widebar{\mathbf L}_{\cdot,t}=\mathbf Y_t$ is then an unrestricted adapted barycenter representative. If moreover $\rank(\mathbf P_t)=d$ for every $t$, Lemma~\ref{lem:Pk-rank-common-noise} and Theorem~\ref{thm:FG-common-noise-criterion} yield an ordinary representative in $\cFG(T,d)$; that representative is a global restricted minimizer, although Algorithm~\ref{alg:ABW} is not guaranteed to converge to it. If some $\mathbf S_t$ has a negative eigenvalue, the fixed point is not certified, and one may restart from a perturbed initialization or use an interior-point solver for \eqref{eq:local.sdp} to obtain a global minimizer.
\end{remark}

\newcommand{\mgle}{\mathrm{mgle}}

\section{The Martingale barycenter}
\label{sec:mgle-barycenters}


\medskip
In this section, we impose additionally the constraint that the barycenter is a martingale. 
The martingale constraint is a natural structural constraint in dynamic model aggregation.
In mathematical finance, martingale laws encode risk-neutral models and the absence of
predictable gains; this is the structural constraint underlying martingale optimal transport
and robust model-independent pricing, beginning with
\cite{BeiglbockHenryLaborderePenkner2013,BeiglbockJuillet2016}.  Adapted Wasserstein
geometry is also motivated by stability of stochastic optimization problems under
nonanticipative perturbations; see \cite{BBBE2020A}.  Thus, if several calibrated Gaussian
processes are averaged into a consensus model, it is important that the averaging procedure
not introduce a drift.  This is one of the advantages of the adapted barycenter over the
classical Wasserstein barycenter: the multicausal barycenter formulation of
\cite{acciaio2025multicausal} shows that barycenters of martingales remain martingales.
Recent work on martingale projection in adapted Wasserstein distance
\cite{BlanchetWieselZhangZhang2026} and on barycentric aggregation of stochastic-process
models \cite{JaimungalPesenti2026} further supports viewing martingale-constrained
barycenters as a natural dynamic aggregation problem.  In the present filtered Gaussian
setting, the projection results of \cite[Section~4]{gunasingam2026adapted} and the common-noise criterion
from Theorem~\ref{thm:FG-common-noise-criterion} give an explicit finite-dimensional
solution.

\subsection{Filtered Gaussian martingales and projections}


\begin{definition}[Filtered Gaussian martingales]
\label{def:FG-mgle-centered-final}
A (centered) filtered Gaussian process \(\bG^{L}\in\cFG(T,d)\) is called a
\emph{filtered Gaussian martingale} if $X:=L\epsilon$
is a martingale with respect to the driving filtration \(\bF^\eps\).  We write
\(\cFG_{\mgle}(T,d)\) for this class and set
\[
\mathscr L_{\mgle}(T,d)
:=\{L\in\mathscr L(T,d):\bG^{L}\in\cFG_{\mgle}(T,d)\}.
\]
The corresponding subset of \(\FP_2\) is denoted by \(\FGMart\).
\end{definition}

By \cite[Proposition~4.2]{gunasingam2026adapted}, the martingale condition is equivalent to
\begin{equation}
\label{eq:FG-mgle-factor-condition-final}
L_{t,s}=L_{s,s},\qquad 1\le s\le t\le T.
\end{equation}
In particular,
\[
X_t=\sum_{s=1}^tL_{s,s}\eps_s,
\qquad
X_{t+1}=X_t+L_{t+1,t+1}\eps_{t+1}.
\]
Thus
\begin{equation}
\label{eq:L-mgle-subspace-final}
\mathscr L_{\mgle}(T,d)
=
\{L\in\mathscr L(T,d):L_{t,s}=L_{s,s},\,1\le s\le t\le T\}.
\end{equation}

\begin{definition}[Canonical martingale projection]
\label{def:canonical-mgle-projection-final}
For \(L\in\mathscr L(T,d)\), define \(L^{\mgle}\in\mathscr L_{\mgle}(T,d)\) by
\begin{equation}
\label{eq:canonical-mgle-projection-final}
(L^{\mgle})_{t,s}
:=
\frac{1}{T-s+1}\sum_{u=s}^T L_{u,s},
\qquad 1\le s\le t\le T.
\end{equation}
For \(\bX=\bG^{L}\), we define its corresponding martingale projection by \(
\bX^{\mgle}:=\bG^{L^{\mgle}}.
\) By \cite[Corollary~4.8]{gunasingam2026adapted}, this is the canonical representative of the
\(\AW_2\)-projection of \(\bG^{L}\) onto the Gaussian martingale class, up to the
usual right-orthogonal equivalence of factor representatives.
\end{definition}

The next identity is the key point needed for the constrained barycenter problem.  The
projection formula is given in \cite{gunasingam2026adapted}; the argument shows how the orthogonal factoring interacts with the Procrustes representation already used previously.

\begin{lemma}[Pythagorean identity]
\label{lem:mgle-projection-pythagorean-final}
Let \(L\in\mathscr L(T,d)\), and let \(M\in\mathscr L_{\mgle}(T,d)\).  Then
\begin{equation}
\label{eq:mgle-projection-pythagorean-final}
\mathrm{dist}_{\mathrm{ABW}}^2(L,M)
=
\|L-L^{\mgle}\|_F^2+
\mathrm{dist}_{\mathrm{ABW}}^2(L^{\mgle},M).
\end{equation}
Equivalently,
\begin{equation}
\label{eq:mgle-projection-pythagorean-process-final}
\AW_2^2(\bG^{L},\bG^{M})
=
\|L-L^{\mgle}\|_F^2+
\AW_2^2(\bG^{L^{\mgle}},\bG^{M}).
\end{equation}
\end{lemma}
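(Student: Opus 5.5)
The plan is to work entirely with the Procrustes representation \eqref{eq:Procrustes} and to show that \(L^{\mgle}\) is the Frobenius-orthogonal projection of \(L\) onto a linear subspace that is stable under the alignment group. Concretely, I would write
\[
\mathrm{dist}_{\mathrm{ABW}}^2(L,M)=\min_{Q\in\mathscr O(T,d)}\|L-MQ\|_F^2 .
\]
I would then establish the structural facts below, apply a Pythagorean splitting inside the minimum, and only then minimize over \(Q\).

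\textbf{Step 1 (linear structure).} By \eqref{eq:L-mgle-subspace-final}, \(\mathscr L_{\mgle}(T,d)\) is a linear subspace of \(\mathscr L(T,d)\), since the constraints \(L_{t,s}=L_{s,s}\) are linear. Equivalently, it consists of those \(L\) whose block column \(L_{\cdot,s}\) has all blocks at or below row \(s\) equal to one common matrix \(C_s\in\bR^{d\times d}\), with zero blocks above.

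\textbf{Step 2 (invariance under alignments).} If \(M\in\mathscr L_{\mgle}(T,d)\) and \(Q=\diag(Q_1,\ldots,Q_T)\in\mathscr O(T,d)\), then \((MQ)_{t,s}=M_{t,s}Q_s=M_{s,s}Q_s=(MQ)_{s,s}\). Hence \(MQ\in\mathscr L_{\mgle}(T,d)\) as well.

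\textbf{Step 3 (orthogonal projection).} Because of the block-column description, the Frobenius projection of \(L\) onto \(\mathscr L_{\mgle}(T,d)\) decouples over block columns \(s\). For each \(s\) one minimizes \(\sum_{u=s}^T\|L_{u,s}-C\|_F^2\) over \(C\in\bR^{d\times d}\), whose unique minimizer is the average \(\frac{1}{T-s+1}\sum_{u=s}^TL_{u,s}\). This is exactly \eqref{eq:canonical-mgle-projection-final}, so \(L^{\mgle}\) is the orthogonal projection of \(L\), and \(L-L^{\mgle}\perp\mathscr L_{\mgle}(T,d)\) in the Frobenius inner product. One can also verify this directly: for each \(s\), the residual blocks \(L_{u,s}-(L^{\mgle})_{s,s}\), \(u\ge s\), sum to zero, so their inner product with any constant block column vanishes.

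\textbf{Step 4 (conclusion).} Fix \(Q\in\mathscr O(T,d)\). Since \(MQ-L^{\mgle}\in\mathscr L_{\mgle}(T,d)\) by Steps 1–2, orthogonality gives
\[
\|L-MQ\|_F^2=\|L-L^{\mgle}\|_F^2+\|L^{\mgle}-MQ\|_F^2 .
\]
Minimizing both sides over \(Q\in\mathscr O(T,d)\) and using \eqref{eq:Procrustes} for the pair \((L^{\mgle},M)\) yields \eqref{eq:mgle-projection-pythagorean-final}. The process form \eqref{eq:mgle-projection-pythagorean-process-final} then follows from \eqref{eq:AW2.Gaussian} with all means zero.

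The only point requiring care is Step 2. The whole argument hinges on the orbit \(\{MQ\}\) staying inside the subspace onto which we project; otherwise the cross term would not vanish uniformly in \(Q\). This holds because \(Q\) acts block-column-wise by a single orthogonal matrix, which preserves the "constant below the diagonal" structure of each block column.
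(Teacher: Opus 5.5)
Your proposal is correct and takes essentially the same route as the paper. Both arguments split $\|L-MQ\|_F^2$ by a Frobenius Pythagorean identity relative to the linear subspace $\mathscr L_{\mgle}(T,d)$, use that $MQ$ stays in that subspace for every $Q\in\mathscr O(T,d)$, and then minimize over $Q$ in the Procrustes form; the only difference is that you check directly, by column-wise averaging, that $L^{\mgle}$ is the orthogonal projection, whereas the paper cites \cite[Corollary~4.8]{gunasingam2026adapted} for this.
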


\begin{proof}
By \cite[Corollary~4.8]{gunasingam2026adapted}, the representative \(L^{\mgle}\) in
\eqref{eq:canonical-mgle-projection-final} is the Frobenius orthogonal projection of
\(L\) onto the closed linear subspace \(\mathscr L_{\mgle}(T,d)\).  Hence, for every
\(\tilde{L}\in\mathscr L_{\mgle}(T,d)\),
\begin{equation}
\label{eq:mgle-frob-pythagorean-final}
\|L-\tilde{L}\|_F^2
=
\|L-L^{\mgle}\|_F^2+
\|L^{\mgle}-\tilde{L}\|_F^2.
\end{equation}
If \(Q=\diag(Q_1,\ldots,Q_T)\in\mathscr O(T,d)\), then \(MQ\in\mathscr L_{\mgle}(T,d)\),
because
\[
(MQ)_{t,s}=M_{t,s}Q_s=M_{s,s}Q_s=(MQ)_{s,s},
\qquad t\ge s.
\]
Applying \eqref{eq:mgle-frob-pythagorean-final} with \(\tilde{L}=MQ\) gives
\[
\|L-MQ\|_F^2
=
\|L-L^{\mgle}\|_F^2+
\|L^{\mgle}-MQ\|_F^2.
\]
Taking the minimum over \(Q\in\mathscr O(T,d)\) and using
\eqref{eq:Procrustes} proves \eqref{eq:mgle-projection-pythagorean-final}.
The process-level identity follows from \eqref{eq:AW2.Gaussian}.
\end{proof}

\subsection{Barycenter of martingales}
For arbitrary centered filtered Gaussian inputs
\[
\bX^k=\bG^{L^k}\in\cFG(T,d),\qquad k\in[K],
\]
consider the martingale-constrained Gaussian barycenter problem
\begin{equation}
\label{eq:constrained-mgle-barycenter-problem-final}
\inf_{\bY\in\cFG_{\mgle}(T,d)}
\sum_{k=1}^K\lambda_k\AW_2^2(\bX^k,\bY).
\end{equation}

We first solve the unrestricted adapted barycenter problem when the inputs are already
centered filtered Gaussian martingales.  The proof uses the restricted filtered Gaussian
problem and the common-noise criterion from the preceding section.  The point is that,
for martingale factors, the contribution of the time-\(t\) innovation is the same block
repeated on the future rows \(t,t+1,\ldots,T\).  Thus the local unrestricted problem is a
positive scalar multiple of an ordinary \(d\)-dimensional Gaussian barycenter problem,
which admits a rank-\(d\) common-noise optimizer.

\begin{theorem}[Martingale barycenter]
\label{thm:unrestricted-FG-mgle-barycenter-final}
Let
\[
\bX^k:=\bG^{L^k}\in\cFG_{\mgle}(T,d),
\qquad k\in[K].
\]
Then the unrestricted adapted barycenter problem
\begin{equation}
\label{eq:unrestricted-mgle-input-problem-final}
\inf_{\bY\in\FP_2}
\sum_{k=1}^K\lambda_k\AW_2^2(\bX^k,\bY)
\end{equation}
admits a representative \(\bG^{\widebar L}\in\cFG_{\mgle}(T,d)\).  It may be chosen
as follows.  For every \(t\in[T]\), choose orthogonal matrices
\( Q_t^1,\ldots, Q_t^K\in\mathscr O(d)\) minimizing
\begin{equation}
\label{eq:mgle-local-factor-problem-final}
\sum_{k=1}^K\lambda_k\|L^k_{t,t}\|_F^2
-
\left\|\sum_{k=1}^K\lambda_kL^k_{t,t} Q_t^k\right\|_F^2.
\end{equation}
Set
\begin{equation}
\label{eq:mgle-local-factor-average-final}
\widebar L_{t,t}:=\sum_{k=1}^K\lambda_kL^k_{t,t}\mathbf Q_t^k,
\qquad
\widebar L_{r,t}:=\widebar L_{t,t},\quad r\ge t.
\end{equation}
Then \(\bG^{0,\widebar L}\) solves \eqref{eq:unrestricted-mgle-input-problem-final}.  Equivalently,
its one-step covariance blocks are characterized by
\begin{equation}
\label{eq:mgle-local-BW-covariance-final}
\widebar L_{t,t}\widebar L_{t,t}^{\intercal}
\in
\argmin_{\Sigma\in\mathscr S_+(d)}
\sum_{k=1}^K\lambda_k
\mathrm{dist}_{\BW}^2\!\left(
L^k_{t,t}(L^k_{t,t})^{\intercal},\Sigma
\right),
\qquad t\in[T].
\end{equation}
The barycenter value is
\begin{equation}
\label{eq:mgle-unrestricted-value-final}
\sum_{t=1}^T(T-t+1)
\min_{\Sigma\in\mathscr S_+(d)}
\sum_{k=1}^K\lambda_k
\mathrm{dist}_{\BW}^2\!\left(
L^k_{t,t}(L^k_{t,t})^{\intercal},\Sigma
\right).
\end{equation}
In particular, for every martingale candidate \(\bG^{0,M}\in\cFG_{\mgle}(T,d)\),
\begin{equation}
\label{eq:AW-between-mgle-factors-final}
\AW_2^2(\bG^{0,L^k},\bG^{0,M})
=
\sum_{t=1}^T(T-t+1)
\mathrm{dist}_{\BW}^2\!\left(
L^k_{t,t}(L^k_{t,t})^{\intercal},
M_{t,t}M_{t,t}^{\intercal}
\right).
\end{equation}
If the minimizer in \eqref{eq:mgle-local-BW-covariance-final} is unique for every
\(t\), then the barycenter is unique at the level of one-step increment covariances.
This uniqueness holds, for instance, whenever for each \(t\) at least one matrix
\(L^k_{t,t}(L^k_{t,t})^{\intercal}\) is positive definite.
\end{theorem}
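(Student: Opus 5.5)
The plan is to reduce everything to the local problems of Theorem~\ref{thm:gaussian.multicausal.optimizer} and Theorem~\ref{thm:explicit.value}, exploiting the repeated-block structure of martingale factors. By \eqref{eq:FG-mgle-factor-condition-final}, each input block column has the form $L^k_{\cdot,t}=e_{\ge t}\otimes L^k_{t,t}$. Here $e_{\ge t}\in\bR^T$ is the indicator vector of the rows $t,\ldots,T$, and we write $L^k_{\cdot,t}=(e_{\ge t}\otimes I_d)L^k_{t,t}$. Put $E_t:=(e_{\ge t}\otimes I_d)/\sqrt{T-t+1}$, which satisfies $E_t^\intercal E_t=I_d$. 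Then
\[
L^k_{\cdot,t}=\sqrt{T-t+1}\,E_tL^k_{t,t}
\qquad\text{and}\qquad
(L^k_{\cdot,t})^\intercal L^\ell_{\cdot,t}=(T-t+1)(L^k_{t,t})^\intercal L^\ell_{t,t}.
\]

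\emph{Step 1 (formula \eqref{eq:AW-between-mgle-factors-final}).} For $M\in\mathscr L_{\mgle}(T,d)$ we have $(L^k)^\intercal M$ with diagonal blocks $(T-t+1)(L^k_{t,t})^\intercal M_{t,t}$, and $\|L^k_{\cdot,t}\|_F^2=(T-t+1)\|L^k_{t,t}\|_F^2$. Substituting into \eqref{eq:ABW} and applying \eqref{eqn:d.BW} for each $t$ gives the weighted sum of $d\times d$ Bures--Wasserstein distances.

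\emph{Step 2 (local problems).} Using \eqref{eq:Phi_t.expanded}, $\Phi_t(\mathbf P_t)=(T-t+1)\Psi_t(\mathbf P_t)$, where $\Psi_t$ is the trace functional built from the $d\times d$ factors $L^k_{t,t}$, that is, the case $T=1$. The key observation is that the $T=1$ problem always has a rank-$d$ optimizer. Apply Theorem~\ref{thm:explicit.value} with $T=1$: some minimizer $\Sigma_t$ of $\sum_k\lambda_k\mathrm{dist}_{\BW}^2(L^k_{t,t}(L^k_{t,t})^\intercal,\Sigma)$ over $\mathscr S_+(d)$ exists and automatically has rank $\le d$. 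Then run Step~2 of the proof of Theorem~\ref{thm:explicit.value} with $\Sigma=\Sigma_t$. Since there $Td=d$, the matrices $\mathbf M^k=U^k(V^k)^\intercal$ are $d\times d$ orthogonal, so the resulting $\mathbf P_t$ has blocks $\mathbf M^k(\mathbf M^\ell)^\intercal$ and rank $d$ by Lemma~\ref{lem:Pk-rank-common-noise}. The same step also shows that this $\mathbf P_t$ minimizes $\Psi_t$, hence $\Phi_t$. Therefore Condition~\ref{cond:common-noise.input} holds at every $t$.

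\emph{Step 3 (conclusion).} Theorem~\ref{thm:FG-common-noise-criterion} now gives the representative $\widebar L_{\cdot,t}=\sum_k\lambda_kL^k_{\cdot,t}Q^k_t=\sqrt{T-t+1}\,E_t\widebar L_{t,t}$. This has exactly the form \eqref{eq:mgle-local-factor-average-final}, so it is a martingale factor. The $Q^k_t$ minimizing \eqref{eq:mgle-local-factor-problem-final} are precisely the rank-$d$ minimizers of $\Psi_t$ via \eqref{eq:local.restricted.factor.problem}, and these are global minimizers by Step~2; hence any such choice works.

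For the characterization \eqref{eq:mgle-local-BW-covariance-final}, note that $\widebar L_{t,t}\widebar L_{t,t}^\intercal=\Sigma(\mathbf P_t)$ in the $T=1$ problem, which is a BW barycenter by the correspondence in Theorem~\ref{thm:explicit.value}. The value \eqref{eq:mgle-unrestricted-value-final} follows from Theorem~\ref{thm:gaussian.multicausal.optimizer}(1) together with $\Phi_t=(T-t+1)\Psi_t$ and \eqref{eq:local.BW.barycenter.identity} at $T=1$.

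\emph{Uniqueness.} If the local barycenters are unique, every barycenter, in particular every martingale one by Step~1, has determined increment covariances. Positive definiteness of some $L^k_{t,t}(L^k_{t,t})^\intercal$ gives uniqueness by the classical Agueh--Carlier result (Remark~\ref{rem:explicit.cases}).

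The main obstacle is Step~2, namely ensuring that the optimizer extracted from a $d\times d$ BW barycenter genuinely has rank $d$. This holds because the ambient dimension equals $d$, which forces the row-orthonormal $\mathbf M^k$ to be square. I would check carefully that the completion of zero singular directions keeps them square orthogonal.
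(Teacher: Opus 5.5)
Your proposal is correct and follows essentially the same route as the paper: the scaling identity $(L^k_{\cdot,t})^\intercal L^\ell_{\cdot,t}=(T-t+1)(L^k_{t,t})^\intercal L^\ell_{t,t}$ reduces $\Phi_t$ to $(T-t+1)$ times the horizon-one problem, where the rank-$d$ constraint is vacuous, and Theorem~\ref{thm:FG-common-noise-criterion} then yields an aligned average of repeated blocks, which is a martingale factor. The differences are minor. You derive \eqref{eq:AW-between-mgle-factors-final} directly from \eqref{eq:ABW} instead of citing it. You also extract the rank-$d$ optimizer from the explicit construction in Step~2 of Theorem~\ref{thm:explicit.value}, where $\mathbf M^k$ is forced to be square orthogonal, whereas the paper compares the unrestricted and rank-$d$ minima via \eqref{eq:local.restricted.factor.problem} and the fact that every $\Sigma\in\mathscr S_+(d)$ is of the form $YY^\intercal$ with $Y\in\bR^{d\times d}$.
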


\begin{proof}
We first compute the martingale-restricted objective.  Let
\(\bG^{M}\in\cFG_{\mgle}(T,d)\).  By \cite[Corollary~4.5]{gunasingam2026adapted}, the adapted
Gaussian distance between martingale factors is exactly
\eqref{eq:AW-between-mgle-factors-final}.  Consequently,
\begin{align*}
&\sum_{k=1}^K\lambda_k\AW_2^2(\bG^{L^k},\bG^{M})=
\sum_{t=1}^T(T-t+1)
\sum_{k=1}^K\lambda_k
\mathrm{dist}_{\BW}^2\!\left(
L^k_{t,t}(L^k_{t,t})^{\intercal},
M_{t,t}M_{t,t}^{\intercal}
\right).
\end{align*}
The variables \(M_{t,t}M_{t,t}^{\intercal}\) are separated over \(t\), and the
coefficients \(T-t+1\) are strictly positive.  Hence the martingale-restricted
minimizers are exactly the local Bures--Wasserstein barycenters in
\eqref{eq:mgle-local-BW-covariance-final}, repeated as in
\eqref{eq:mgle-local-factor-average-final}.  The restricted value is therefore
\eqref{eq:mgle-unrestricted-value-final}.  Existence of the local covariance minimizers
follows directly from Theorem~\ref{thm:explicit.value}, applied to the static
\((T=1)\) Gaussian problem. This argument includes singular increment covariances.

It remains to prove that the same factor is an unrestricted adapted barycenter, not only a
martingale-restricted minimizer.  Fix \(t\in[T]\).  Since every input is a martingale
factor, for all \(k,\ell\in[K]\),
\begin{equation}
\label{eq:mgle-column-scaling-final}
(L^k_{\cdot,t})^{\intercal}L^\ell_{\cdot,t}
=
(T-t+1)(L^k_{t,t})^{\intercal}L^\ell_{t,t}.
\end{equation}
Thus the one-step trace functional \(\Phi_t\) from \eqref{eq:Phi_t} is
\((T-t+1)\) times the ordinary Gaussian multimarginal trace functional for the
\(d\)-dimensional covariance matrices
\[
L^1_{t,t}(L^1_{t,t})^{\intercal},\ldots,
L^K_{t,t}(L^K_{t,t})^{\intercal}.
\]
The factor formulation of this ordinary Gaussian barycenter problem is precisely the
minimization in \eqref{eq:mgle-local-factor-problem-final}; for fixed
\(\mathbf Q^1,\ldots,\mathbf Q^K\), the optimal factor is the weighted aligned average
\(\sum_k\lambda_kL^k_{t,t}\mathbf Q^k\).

Let \(\mathbf Q_t^1,\ldots,\mathbf Q_t^K\) be optimizers of
\eqref{eq:mgle-local-factor-problem-final}, and define
\begin{equation}
\label{eq:mgle-common-noise-matrix-final}
(\mathbf P_t^\star)^{k,\ell}:=\mathbf Q_t^k(\mathbf Q_t^\ell)^{\intercal},
\qquad k,\ell\in[K].
\end{equation}
By Lemma~\ref{lem:Pk-rank-common-noise}, \(\mathbf P_t^\star\in\mathscr{P}_+(K,d)\) and
\(\rank(\mathbf P_t^\star)=d\). We claim that \(\mathbf P_t^\star\) minimizes the full local
functional \(\Phi_t\) over \(\mathscr{P}_+(K,d)\), and not merely over its rank-\(d\)
elements. By the scaling identity \eqref{eq:mgle-column-scaling-final}, this is the
static \((T=1)\) trace problem for
\[
A^k:=L^k_{t,t}(L^k_{t,t})^{\intercal},\qquad k\in[K].
\]
Theorem~\ref{thm:explicit.value}, applied with horizon one, identifies its unrestricted
minimum with
\[
\min_{\Sigma\in\mathscr S_+(d)}
\sum_{k=1}^K\lambda_k\mathrm{dist}_{\BW}^2(A^k,\Sigma).
\]
On the other hand, \eqref{eq:local.restricted.factor.problem}, again with horizon one,
identifies the rank-\(d\) minimum with the same expression restricted to covariances of
the form \(YY^{\intercal}\), \(Y\in\bR^{d\times d}\). This is no restriction because
every \(\Sigma\in\mathscr S_+(d)\), singular or nonsingular, has a square root of that
form. Hence the unrestricted and rank-\(d\) minima coincide, and the optimizer
\(\mathbf P_t^\star\) constructed above is a global minimizer of \(\Phi_t\). Since this
holds at every time,
Theorem~\ref{thm:FG-common-noise-criterion} applies.  Hence the unrestricted
adapted barycenter admits an ordinary filtered Gaussian representative with
block-columns
\begin{equation}
\label{eq:mgle-common-noise-column-final}
\widebar L_{\cdot,t}=
\sum_{k=1}^K\lambda_kL^k_{\cdot,t}\mathbf Q_t^k.
\end{equation}
For \(r\ge t\), the \((r,t)\)-block of the right-hand side is
\[
\sum_{k=1}^K\lambda_kL^k_{t,t}\mathbf Q_t^k,
\]
which is independent of \(r\).  Thus \(\widebar L_{r,t}=\widebar L_{t,t}\) for all
\(r\ge t\), so \(\widebar L\in\mathscr L_{\mgle}(T,d)\).  The unrestricted value is therefore
identical to the martingale-restricted value computed above, and \(\bG^{0,\widebar L}\) is
an unrestricted adapted barycenter representative.

The final assertion follows by applying the standard uniqueness criterion for Gaussian
covariance barycenters independently at each time. Different factors of a fixed covariance
represent the same increment covariance and differ only by the usual right orthogonal
equivalence in \(\FG\).
\end{proof}

\begin{theorem}[Projection before martingale barycenter]
\label{thm:projection-before-mgle-barycenter-final}
Let \(\bX^k:=\bG^{L^k}\in\cFG(T,d)\)for each \(k\in[K]\), and set
\(
(\bX^k)^{\mgle}:=\bG^{(L^k)^{\mgle}}.
\)
Then
\begin{align}
\label{eq:projection-before-restricted-final}
&\inf_{\bY\in\cFG_{\mgle}(T,d)}
\sum_{k=1}^K\lambda_k\AW_2^2(\bX^k,\bY)\notag\\
&\quad=
\sum_{k=1}^K\lambda_k\|L^k-(L^k)^{\mgle}\|_F^2
+
\inf_{\bY\in\cFG_{\mgle}(T,d)}
\sum_{k=1}^K\lambda_k\AW_2^2((\bX^k)^{\mgle},\bY).
\end{align}
Moreover, the two infima over \(\cFG_{\mgle}(T,d)\) have exactly the same minimizers.
Applying Theorem~\ref{thm:unrestricted-FG-mgle-barycenter-final} to the
projected martingale inputs, this becomes
\begin{align}
\label{eq:projection-before-unrestricted-final}
&\inf_{\bY\in\cFG_{\mgle}(T,d)}
\sum_{k=1}^K\lambda_k\AW_2^2(\bX^k,\bY)\notag\\
&
\quad=
\sum_{k=1}^K\lambda_k\|L^k-(L^k)^{\mgle}\|_F^2
+
\inf_{\bZ\in\FP_2}
\sum_{k=1}^K\lambda_k\AW_2^2((\bX^k)^{\mgle},\bZ).
\end{align}
Consequently, a martingale representative \(\bY\in\cFG_{\mgle}(T,d)\) solves
\eqref{eq:constrained-mgle-barycenter-problem-final} if and only if \(\bY\) is an
unrestricted adapted barycenter representative of
\((\bX^1)^{\mgle},\ldots,(\bX^K)^{\mgle}\).
\end{theorem}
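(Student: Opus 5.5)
The proof applies the Pythagorean identity of Lemma~\ref{lem:mgle-projection-pythagorean-final} term by term, then invokes Theorem~\ref{thm:unrestricted-FG-mgle-barycenter-final}.

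\textbf{Step 1: equality of the two restricted infima.} Fix an arbitrary candidate $\bY=\bG^{M}\in\cFG_{\mgle}(T,d)$, so that $M\in\mathscr L_{\mgle}(T,d)$. For each $k$, the process-level identity \eqref{eq:mgle-projection-pythagorean-process-final} gives
\[
\AW_2^2(\bX^k,\bG^{M})=\|L^k-(L^k)^{\mgle}\|_F^2+\AW_2^2((\bX^k)^{\mgle},\bG^{M}).
\]
Multiplying by $\lambda_k$ and summing, the objective of the original problem equals the objective for the projected inputs plus the constant $c:=\sum_k\lambda_k\|L^k-(L^k)^{\mgle}\|_F^2$. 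The constant does not depend on $M$. Taking the infimum over $\cFG_{\mgle}(T,d)$ therefore yields \eqref{eq:projection-before-restricted-final}. Because the two objectives differ by a constant on the whole feasible set, they have exactly the same minimizers.

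One point needs care in this step. $\cFG_{\mgle}(T,d)$ consists of centered processes, so no mean term appears, and the identity of Lemma~\ref{lem:mgle-projection-pythagorean-final} applies directly to each candidate.

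\textbf{Step 2: passage to the unrestricted problem.} The projected inputs $(\bX^k)^{\mgle}=\bG^{(L^k)^{\mgle}}$ lie in $\cFG_{\mgle}(T,d)$, by Definition~\ref{def:canonical-mgle-projection-final}. Theorem~\ref{thm:unrestricted-FG-mgle-barycenter-final} then provides a representative $\bG^{\widebar L}\in\cFG_{\mgle}(T,d)$ that attains the unrestricted infimum over $\FP_2$. This gives
\[
\inf_{\FP_2}\sum_k\lambda_k\AW_2^2((\bX^k)^{\mgle},\cdot)\;\le\;\inf_{\cFG_{\mgle}}\sum_k\lambda_k\AW_2^2((\bX^k)^{\mgle},\cdot)\;\le\;\sum_k\lambda_k\AW_2^2((\bX^k)^{\mgle},\bG^{\widebar L})=\inf_{\FP_2}\sum_k\lambda_k\AW_2^2((\bX^k)^{\mgle},\cdot).
\]
The first inequality holds simply because $\cFG_{\mgle}(T,d)\subset\FP_2$. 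All three quantities are therefore equal. Substituting this into \eqref{eq:projection-before-restricted-final} gives \eqref{eq:projection-before-unrestricted-final}.

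\textbf{Step 3: characterization of minimizers.} Suppose first that $\bY\in\cFG_{\mgle}(T,d)$ solves \eqref{eq:constrained-mgle-barycenter-problem-final}. By Step~1, $\bY$ minimizes the projected-input objective over $\cFG_{\mgle}(T,d)$. By Step~2, that minimum equals the $\FP_2$ infimum, so $\bY$ is an unrestricted barycenter representative of the projected inputs. Conversely, suppose a martingale $\bY$ is an unrestricted barycenter of the projected inputs. It then attains the $\FP_2$ infimum. Since it is feasible for $\cFG_{\mgle}(T,d)$, it also attains the restricted infimum for the projected inputs, and by Step~1 it solves the original problem.

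No step is a real obstacle once Lemma~\ref{lem:mgle-projection-pythagorean-final} and Theorem~\ref{thm:unrestricted-FG-mgle-barycenter-final} are available. The only point needing attention is the one handled in Step~2: the unrestricted infimum over $\FP_2$ for the projected inputs must be attained inside the martingale class, and Theorem~\ref{thm:unrestricted-FG-mgle-barycenter-final} supplies exactly this.
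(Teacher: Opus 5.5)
Your proof is correct and follows the paper's argument essentially step for step. The term-by-term Pythagorean identity of Lemma~\ref{lem:mgle-projection-pythagorean-final} shifts the objective on $\cFG_{\mgle}(T,d)$ by a constant, and Theorem~\ref{thm:unrestricted-FG-mgle-barycenter-final}, applied to the projected martingale inputs, shows the restricted and unrestricted values coincide; your chain of inequalities in Step~2 just writes out that equality, which the paper asserts directly.
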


\begin{proof}
Let \(\bY=\bG^{M}\in\cFG_{\mgle}(T,d)\).  Applying
Lemma~\ref{lem:mgle-projection-pythagorean-final} to the pair \((L^k,M)\) gives, for
each \(k\),
\[
\AW_2^2(\bG^{L^k},\bG^{M})
=
\|L^k-(L^k)^{\mgle}\|_F^2
+
\AW_2^2(\bG^{(L^k)^{\mgle}},\bG^{M}).
\]
Multiplying by \(\lambda_k\) and summing over \(k\) yields the pointwise identity
\begin{align}
\label{eq:projection-before-pointwise-final}
&\sum_{k=1}^K\lambda_k\AW_2^2(\bG^{L^k},\bG^{M})\notag\\
&\quad=
\sum_{k=1}^K\lambda_k\|L^k-(L^k)^{\mgle}\|_F^2
+
\sum_{k=1}^K\lambda_k\AW_2^2(\bG^{(L^k)^{\mgle}},\bG^{M}).
\end{align}
The first term on the right-hand side is independent of \(M\).  Therefore taking the
infimum over \(M\in\mathscr L_{\mgle}(T,d)\) gives
\eqref{eq:projection-before-restricted-final}, and the equality of minimizer sets follows
from the same pointwise identity.

It remains only to justify the unrestricted infimum in
\eqref{eq:projection-before-unrestricted-final}.  The inputs
\((\bX^k)^{\mgle}\) are centered filtered Gaussian martingales.  By
Theorem~\ref{thm:unrestricted-FG-mgle-barycenter-final}, their unrestricted adapted
barycenter problem over \(\FP_2\) admits a representative in
\(\cFG_{\mgle}(T,d)\), and the unrestricted value is equal to the value of the same
problem restricted to \(\cFG_{\mgle}(T,d)\).  Substituting this equality into
\eqref{eq:projection-before-restricted-final} proves
\eqref{eq:projection-before-unrestricted-final}.  The final assertion follows because a
martingale candidate minimizes the left-hand side if and only if, by
\eqref{eq:projection-before-pointwise-final}, it minimizes the projected-input objective;
by Theorem~\ref{thm:unrestricted-FG-mgle-barycenter-final}, these minimizers are exactly
the unrestricted barycenter representatives of the projected martingales that lie in
\(\cFG_{\mgle}(T,d)\).
\end{proof}

\begin{corollary}
\label{cor:explicit-constrained-mgle-barycenter-final}
Let \(\bX^k=\bG^{0,L^k}\in\cFG(T,d)\), \(k\in[K]\), be arbitrary centered
filtered Gaussian inputs.  Then the constrained problem
\eqref{eq:constrained-mgle-barycenter-problem-final} is solved as follows.  First form the
martingale projections \((L^k)^{\mgle}\) by
\eqref{eq:canonical-mgle-projection-final}.  Then, for every \(t\in[T]\), choose
\begin{equation}
\label{eq:constrained-projected-local-covariance-final}
\widebar{\Sigma}_t:=\widebar L_{t,t}\widebar L_{t,t}^{\intercal}
\in
\argmin_{\Sigma\in\mathscr S_+(d)}
\sum_{k=1}^K\lambda_k
\mathrm{dist}_{\BW}^2\!\left(
(L^k)^{\mgle}_{t,t}\big((L^k)^{\mgle}_{t,t}\big)^{\intercal},
\Sigma
\right),
\end{equation}
and set \(\widebar L_{t,s}=\widebar L_{s,s}\) for \(t\ge s\). Then
\(\bG^{\widebar L}\in\cFG_{\mgle}(T,d)\) solves
\eqref{eq:constrained-mgle-barycenter-problem-final}.  Its value is
\begin{align}
\label{eq:constrained-mgle-value-explicit-final}
&\sum_{k=1}^K\lambda_k\|L^k-(L^k)^{\mgle}\|_F^2\notag\\
&\quad+
\sum_{t=1}^T(T-t+1)
\min_{\Sigma_t\in\mathscr S_+(d)}
\sum_{k=1}^K\lambda_k
\mathrm{dist}_{\BW}^2\!\left(
(L^k)^{\mgle}_{t,t}\big((L^k)^{\mgle}_{t,t}\big)^{\intercal},
\Sigma_t
\right),
\end{align}
where
\begin{equation}
\label{eq:projected-mgle-diagonal-block-final}
(L^k)^{\mgle}_{t,t}
=
\frac{1}{T-t+1}\sum_{s=t}^T L^k_{s,t}.
\end{equation}
\end{corollary}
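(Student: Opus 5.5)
The corollary follows by chaining Theorem~\ref{thm:projection-before-mgle-barycenter-final} with Theorem~\ref{thm:unrestricted-FG-mgle-barycenter-final}, applied to the projected inputs. There are four steps.

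\textbf{Step 1 (projected inputs are martingale factors).} By Definition~\ref{def:canonical-mgle-projection-final}, each $(L^k)^{\mgle}$ lies in $\mathscr L_{\mgle}(T,d)$. So $(\bX^k)^{\mgle}=\bG^{(L^k)^{\mgle}}\in\cFG_{\mgle}(T,d)$ are centered filtered Gaussian martingales. Setting $s=t$ in \eqref{eq:canonical-mgle-projection-final} gives the diagonal-block formula \eqref{eq:projected-mgle-diagonal-block-final}. Since the martingale condition \eqref{eq:FG-mgle-factor-condition-final} holds, these diagonal blocks determine the whole projected factor.

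\textbf{Step 2 (barycenter of the projected inputs).} Apply Theorem~\ref{thm:unrestricted-FG-mgle-barycenter-final} to the inputs $(L^k)^{\mgle}$. For each $t$, any minimizer $\widebar\Sigma_t$ in \eqref{eq:constrained-projected-local-covariance-final} exists by Theorem~\ref{thm:explicit.value} with $T=1$. Writing it as $\widebar L_{t,t}\widebar L_{t,t}^\intercal$ and repeating the block down the column gives a factor $\widebar L\in\mathscr L_{\mgle}(T,d)$. The characterization \eqref{eq:mgle-local-BW-covariance-final} then shows that $\bG^{\widebar L}$ is an unrestricted adapted barycenter of the projected inputs.

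One detail needs care. The theorem constructs $\widebar L_{t,t}$ as an aligned average $\sum_k\lambda_k(L^k)^{\mgle}_{t,t}\mathbf Q_t^k$, whereas here $\widebar L_{t,t}$ is an arbitrary square root of a chosen minimizer $\widebar\Sigma_t$. This is harmless for two reasons:
\begin{itemize}
\item by \eqref{eq:AW-between-mgle-factors-final}, the martingale objective depends on $M$ only through the covariances $M_{t,t}M_{t,t}^\intercal$;
\item the proof of Theorem~\ref{thm:unrestricted-FG-mgle-barycenter-final} shows that the martingale-restricted minimizers are exactly the local Bures--Wasserstein barycenters, repeated down the columns.
\end{itemize}
So any $\widebar\Sigma_t$ and any square root of it produce a restricted minimizer. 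This is the only point requiring care, and I expect it to be the main (mild) obstacle.

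\textbf{Step 3 (transfer to the original inputs).} By the final assertion of Theorem~\ref{thm:projection-before-mgle-barycenter-final}, a martingale representative solves \eqref{eq:constrained-mgle-barycenter-problem-final} if and only if it minimizes the projected-input objective over $\cFG_{\mgle}(T,d)$. Alternatively, one can use the equality of minimizer sets in \eqref{eq:projection-before-restricted-final}. Either way, $\bG^{\widebar L}$ solves \eqref{eq:constrained-mgle-barycenter-problem-final}.

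\textbf{Step 4 (value).} Substitute the value formula \eqref{eq:mgle-unrestricted-value-final}, applied to the projected inputs, into \eqref{eq:projection-before-restricted-final}. The result is exactly \eqref{eq:constrained-mgle-value-explicit-final}.
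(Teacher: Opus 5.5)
Your proposal is correct and follows the paper's proof, which is exactly the combination of Theorem~\ref{thm:projection-before-mgle-barycenter-final} with Theorem~\ref{thm:unrestricted-FG-mgle-barycenter-final} applied to the projected martingale inputs. Your deviations are cosmetic. You spell out that any square root of any chosen local minimizer $\widebar{\Sigma}_t$ is admissible, which is justified by the characterization \eqref{eq:mgle-local-BW-covariance-final}. You also substitute the value into \eqref{eq:projection-before-restricted-final} rather than \eqref{eq:projection-before-unrestricted-final}, which is equivalent because the martingale-restricted and unrestricted values coincide for martingale inputs.
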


\begin{proof}
This is the combination of Theorem~\ref{thm:projection-before-mgle-barycenter-final}
with Theorem~\ref{thm:unrestricted-FG-mgle-barycenter-final} applied to the projected
martingales \((\bX^1)^{\mgle},\ldots,(\bX^K)^{\mgle}\).  The value formula is obtained
by substituting \eqref{eq:mgle-unrestricted-value-final} into
\eqref{eq:projection-before-unrestricted-final}.
\end{proof}

\begin{remark}\label{rem:mgle-projection-first-not-output-projection-final}
Theorem~\ref{thm:projection-before-mgle-barycenter-final} does not assert that one may
first compute an unrestricted barycenter of the original inputs and then project the output
onto \(\mathscr L_{\mgle}(T,d)\).  The identity
\eqref{eq:mgle-projection-pythagorean-final} holds against martingale candidates.  In
general, the optimal Procrustes alignments in \(\mathrm{dist}_{\mathrm{ABW}}\) depend on the candidate
factor, so barycenter and projection need not commute. The result shows that the
variational statement is required for the constrained problem: once the candidate is restricted
to the martingale class, the non-martingale component of each input contributes only the
constant residual \(\|L^k-(L^k)^{\mgle}\|_F^2\).
\end{remark}

\section{Example: Autoregressive Processes}\label{sec:OU}
In this section we study univariate, finite-horizon, time-varying $AR(1)$ inputs and their adapted barycenters. This class is rich enough to exhibit the key features of the adapted setting while remaining explicit. Recall that $AR(1)$ is the discrete-time counterpart of the Ornstein--Uhlenbeck process.
\subsection*{Autoregressive Processes}
$AR(p)$ processes serve as a standard tool in time-series analysis and offer an example of iterated function systems, see for instance \cite{alsm03}. 
They arise naturally in a wide range of applications, including financial time series modelling, signal processing, and econometrics, see \cite{BrockwellDavis1991} for a thorough analysis. Here, we focus on the base case $p=1$.  As in \eqref{eqn:filtered.Gaussian}, let $\eps = (\eps_t)_{t=1}^T$ be the canonical standard Gaussian innovation process with its generated filtration $\bF^{\eps}$.
For $k = 1, \dots, K$ we define $\bX^k$ as the univariate discrete-time autoregressive process $AR(1)$ on $t = 1, \ldots, T$ by
\footnote{
Typically, $AR(1)$ processes are introduced with a time-independent autoregressive parameter. Here we allow a general finite-horizon time-varying parameter $\alpha_t^k$. Time-varying autoregressions under additional local-stationarity assumptions are studied in \cite{dahlhaus1996, dahlhaus1997}.
}
\begin{equation}\label{eq:AR_def}
\begin{aligned}
X_1^k &= \sigma_1^k \eps_1, \\
X_t^k &= \alpha_t^k X_{t-1}^k + \sigma_t^k \eps_t \quad \text{for } t = 2, \ldots, T,
\end{aligned}
\end{equation}
where $\alpha_t ^k\in \bR$ is the \emph{autoregressive coefficient} and $\sigma_t^k > 0$ is the \emph{volatility}. We may expand the terms in \eqref{eq:AR_def} to
\begin{equation}\label{eq:unrolled}
X_t^k = \sigma_t^k \eps_t + \alpha_t^k \sigma_{t-1}^k \eps_{t-1} + \alpha_t ^k\alpha_{t-1}^k \sigma_{t-2}^k \eps_{t-2} + \cdots + \left(\prod_{u=2}^t \alpha_u^k\right) \sigma_1^k \eps_1.
\end{equation}
Thus, we may rewrite \eqref{eq:AR_def} in the form $X^k = L^k\eps$ for a lower-triangular matrix $L^k \in \mathscr L(T,1)$ with entries
\begin{equation}\label{eq:Lentries}
L_{t,s}^k = \sigma_s^k \prod_{u=s+1}^t \alpha_u^k, \quad 1 \leq s \leq t \leq T,
\end{equation}
and the convention that empty products equal $1$. Note that since $\sigma_t^k > 0$ for all $t$, we have $L_{t,t}^k = \sigma_t^k > 0$, and thus $L^k \in \mathscr L^{\mathrm{reg}}(T,1)$.
\subsection*{The fixed-point equation}
Let $\lambda_1, \dots, \lambda_K > 0$ be convex weights. If $\widebar L$ is a restricted minimizer, the fixed-point condition \eqref{eq:FPE} can be written columnwise in the scalar case as follows. For each $k$ and each innovation time $s$, choose
\[
\varepsilon_s^k\in \operatorname{sign}\bigl(((L^k)^\intercal \widebar L)_{s,s}\bigr),
\qquad
\operatorname{sign}(r)=
\begin{cases}
\{+1\},& r>0,\\
\{-1\},& r<0,\\
\{+1,-1\},& r=0.
\end{cases}
\]
Then
\begin{equation}\label{eq:FPEentries}
\widebar L_{t,s} = \sum_{k=1}^K \lambda_k L^k_{t,s} \varepsilon^k_s, \quad 1 \leq s \leq t \leq T.
\end{equation}
In the special case where all the process parameters $\alpha_t^k$ are positive for $k = 1, \dots, K$ and $t = 2, \dots , T$, the barycenter of the $AR(1)$ processes $\bX^k$ reduces to their Knothe--Rosenblatt barycenter; see \cite[Section 1.5]{KnoRos_bar}.

\begin{proposition}[AR processes with nonnegative coefficients]\label{prop:AR_pos_signs}
Let \(\bX^1,\ldots,\bX^K\) be the univariate \(AR(1)\) processes defined by
\eqref{eq:AR_def}, with \(\sigma_t^k>0\) and \(\alpha_t^k\ge 0\) for every
\(k\in[K]\) and \(t=2,\ldots,T\). Then, for every \(t\in[T]\), we have
\[
\mathbf P_t^\star:=\mathbf 1_K\mathbf 1_K^\intercal
\]
where $\mathbf{1}_K:=[1,1,\dots,1]^\intercal\in\R^K$
is the unique minimizer of \(\Phi_t\) over \(\mathscr{P}_+(K)\).

Consequently, the common-noise input condition holds, the restricted and
unrestricted barycenter values coincide, and the barycenter admits the ordinary
filtered Gaussian representative
\[
\widebar{\bX}=\bG^{0,\widebar L}\in\cFG(T,1),
\qquad
\widebar L_{\cdot,t}
=
\mathbf L_{\cdot,t}\Lambda\mathbf 1_K
=
\sum_{k=1}^K\lambda_k L_{\cdot,t}^k,
\qquad t\in[T].
\]
Equivalently, under the common innovation convention,
\[
\widebar X_u=\sum_{k=1}^K\lambda_kX_u^k,
\qquad u\in[T].
\]
Thus, the adapted barycenter, both unrestricted in \(\FP_2\) and restricted to
\(\cFG(T,1)\), coincides with the Knothe--Rosenblatt barycenter.
\end{proposition}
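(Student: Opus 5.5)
The argument rests on the uniqueness mechanism in the proof of Theorem~\ref{thm:uniqueness}: check entrywise positivity of the Gram matrices $\mathbf L_{\cdot,t}^\intercal\mathbf L_{\cdot,t}$ and apply the contraction inequality \eqref{eq:trace-contraction-inequality} with $d=1$. By \eqref{eq:Lentries}, every entry $L^k_{u,t}=\sigma_t^k\prod_{r=t+1}^u\alpha_r^k$ with $u\ge t$ is nonnegative, and the diagonal entry $L^k_{t,t}=\sigma_t^k$ is strictly positive. Hence, for all $k,\ell\in[K]$,
\[
(L^k_{\cdot,t})^\intercal L^\ell_{\cdot,t}=\sum_{u=t}^T L^k_{u,t}L^\ell_{u,t}\ \ge\ \sigma_t^k\sigma_t^\ell>0 .
\]
Thus Assumption~\ref{cond:uniqueness} holds with all $Q_t^k=1$: every $1\times1$ block $G_t^{k,\ell}$ is strictly positive. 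We do not need the zero-sign ambiguity in \eqref{eq:FPEentries}.

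The local problem is then settled as in the first part of the proof of Theorem~\ref{thm:uniqueness}. Using the expanded form \eqref{eq:Phi_t.expanded}, minimizing $\Phi_t$ over $\mathscr P_+(K)$ means maximizing $\sum_{k,\ell}\lambda_k\lambda_\ell G_t^{k,\ell}\rho_{k\ell}$, where $|\rho_{k\ell}|\le1$. Each weight $\lambda_k\lambda_\ell G_t^{k,\ell}$ is strictly positive, so the sum is at most $\sum_{k,\ell}\lambda_k\lambda_\ell G_t^{k,\ell}$. Equality forces $\rho_{k\ell}=1$ for every pair, which gives $\mathbf P_t^\star=\mathbf 1_K\mathbf 1_K^\intercal$. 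This matrix is positive semidefinite and has unit diagonal, so it is feasible and is the unique minimizer. Alternatively, one can simply quote Lemma~\ref{lem:strict-regularity-implies-common-noise}.

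The consequences follow from earlier results. Since $\operatorname{rank}(\mathbf 1_K\mathbf 1_K^\intercal)=1=d$, Condition~\ref{cond:common-noise.input} holds. Theorem~\ref{thm:FG-common-noise-criterion} with $\mathbf Q_t^k=1$ then yields the representative $\bG^{0,\widebar L}$ with $\widebar L_{\cdot,t}=\sum_k\lambda_kL^k_{\cdot,t}=\mathbf L_{\cdot,t}\Lambda\mathbf 1_K$. Theorem~\ref{thm:restricted-FG-barycenters} (the final assertion) gives equality of the restricted and unrestricted values, and Theorem~\ref{thm:uniqueness} gives uniqueness in $\FP_2$. Under the common innovation $\eps$ we have pathwise
\[
\widebar X=\widebar L\eps=\sum_k\lambda_kL^k\eps=\sum_k\lambda_kX^k .
\]
For the Knothe--Rosenblatt identification, the plan is to observe that each $X^k$ is increasing in $\eps_t$ given the past, because the coefficient $\sigma_t^k$ is positive. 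Consequently, the comonotone coupling through the common noise is exactly the Knothe--Rosenblatt coupling, and its barycentric average is the Knothe--Rosenblatt barycenter of \cite{KnoRos_bar}.

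The main obstacle is this last identification, since it depends on the precise definition in \cite{KnoRos_bar}. The adapted part is routine once strict positivity of $G_t^{k,\ell}$ is noted. Strict positivity is essential for uniqueness, and it comes from the diagonal term $\sigma_t^k\sigma_t^\ell>0$, even when some $\alpha$'s vanish.
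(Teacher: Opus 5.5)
Your proposal is correct and follows essentially the paper's proof: strict positivity $g_t^{k,\ell}\ge\sigma_t^k\sigma_t^\ell>0$ of the Gram entries, the bound $|\mathbf P^{k,\ell}|\le 1$ forcing $\mathbf 1_K\mathbf 1_K^\intercal$ as the unique maximizer of the trace term, and then the common-noise criterion of Theorem~\ref{thm:FG-common-noise-criterion} with $\mathbf Q_t^k=1$. You also make two points explicit that the paper's proof only asserts: Assumption~\ref{cond:uniqueness} holds with $Q_t^k=1$, so Theorem~\ref{thm:uniqueness} gives uniqueness of the barycenter in $\FP_2$, and the common-noise coupling is comonotone, which justifies the Knothe--Rosenblatt identification.
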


\begin{proof}
Fix \(t\in[T]\), and set
\[
G_t:=\mathbf L_{\cdot,t}^\intercal\mathbf L_{\cdot,t}.
\]
Since \(d=1\), the set \(\mathscr{P}_+(K)\) is the set of \(K\times K\) correlation
matrices. By \eqref{eq:Phi_t}, the local objective is
\[
\Phi_t(\mathbf P)
=
\tr(\Lambda G_t)
-
\tr(\Lambda G_t\Lambda\mathbf P),
\qquad \mathbf P\in\mathscr{P}_+(K).
\]
The first trace is independent of \(\mathbf P\). Hence minimizing \(\Phi_t\) over
\(\mathscr{P}_+(K)\) is equivalent to maximizing the trace term
\[
\tr(\Lambda G_t\Lambda\mathbf P).
\]

Write
\[
g_t^{k,\ell}:=(G_t)_{k,\ell}
=
\bigl(L_{\cdot,t}^k\bigr)^\intercal L_{\cdot,t}^\ell
=
\sum_{u=t}^T L_{u,t}^kL_{u,t}^\ell .
\]
By \eqref{eq:Lentries} and the assumption \(\alpha_s^k\ge0\),
\[
L_{u,t}^k
=
\sigma_t^k\prod_{s=t+1}^u\alpha_s^k
\ge 0,
\qquad u\ge t,
\]
and \(L_{t,t}^k=\sigma_t^k>0\). Therefore, for every \(k,\ell\in[K]\),
\[
g_t^{k,\ell}
=
\sum_{u=t}^T L_{u,t}^kL_{u,t}^\ell
\ge
L_{t,t}^kL_{t,t}^\ell
=
\sigma_t^k\sigma_t^\ell
>0.
\]

Using the trace expansion \eqref{eq:Phi_t.expanded}, in the scalar case we have
\[
\tr(\Lambda G_t\Lambda\mathbf P)
=
\sum_{k,\ell=1}^K
\lambda_k\lambda_\ell g_t^{k,\ell}\mathbf P^{\ell,k}.
\]
Since \(\mathbf P\in\mathscr{P}_+(K)\) is a correlation matrix, every \(2\times2\)
principal minor is nonnegative, and hence
\[
|\mathbf P^{k,\ell}|\le 1,
\qquad k,\ell\in[K].
\]
In particular, \(\mathbf P^{\ell,k}\le 1\). Since all coefficients
\(\lambda_k\lambda_\ell g_t^{k,\ell}\) are strictly positive, it follows that
\[
\tr(\Lambda G_t\Lambda\mathbf P)
\le
\sum_{k,\ell=1}^K
\lambda_k\lambda_\ell g_t^{k,\ell}
=
\tr\!\left(
\Lambda G_t\Lambda\mathbf 1_K\mathbf 1_K^\intercal
\right).
\]
Thus, \(\mathbf P_t^\star=\mathbf 1_K\mathbf 1_K^\intercal\) maximizes the second trace
term, and therefore minimizes \(\Phi_t\).

Moreover, equality can hold only if \(\mathbf P^{k,\ell}=1\) for every off-diagonal pair
\(k\ne\ell\). Since also \(\mathbf P^{k,k}=1\) by definition of \(\mathscr{P}_+(K)\), equality
holds only for \(\mathbf P=\mathbf P_t^\star\). Hence \(\mathbf P_t^\star\) is the unique minimizer.

The matrix \(\mathbf P_t^\star\) has rank one and admits the common-noise representation
\[
(\mathbf P_t^\star)^{k,\ell}=\mathbf Q_t^k(\mathbf Q_t^\ell)^\intercal,
\qquad
\mathbf Q_t^k=1\in\mathscr O(1).
\]
Therefore the common-noise input condition holds. By
Theorem~\ref{thm:FG-common-noise-criterion}, the unrestricted barycenter has an
ordinary filtered Gaussian representative, and its factor is given by
\[
\widebar L_{\cdot,t}
=
\sum_{k=1}^K\lambda_kL_{\cdot,t}^k\mathbf Q_t^k
=
\sum_{k=1}^K\lambda_kL_{\cdot,t}^k
=
\mathbf L_{\cdot,t}\Lambda\mathbf 1_K.
\]
Since
\[
\widebar L_{t,t}
=
\sum_{k=1}^K\lambda_k\sigma_t^k
>0,
\]
this representative has the canonical Cholesky orientation.

Finally, under the common innovation convention,
\[
\widebar X_t
=
\sum_{s=1}^t\widebar L_{t,s}\eps_s
=
\sum_{s=1}^t\sum_{k=1}^K\lambda_kL_{t,s}^k\eps_s
=
\sum_{k=1}^K\lambda_kX_t^k,
\qquad t\in[T].
\]
This is precisely the Knothe--Rosenblatt barycenter of the triangular Gaussian
representatives. The restricted and unrestricted values coincide because every
local unrestricted optimizer \(\mathbf P_t^\star\) already has rank \(d=1\).
\end{proof}

\begin{remark}[The barycenter of $AR(1)$ processes is generally not $AR(1)$]\label{rem:nonAR}
The Cholesky factor of an $AR(1)$ process with parameters $(\alpha_t^k, \sigma_t^k)$ has entries $L_{t,s}^k = \sigma_s^k \prod_{u=s+1}^t \alpha_u^k$. Its defining structural property is that the ratios $L_{t,s}^k / L_{t-1,s}^k = \alpha_t^k$, $s < t$, do not depend on $s$. Suppose now that $\alpha_t^k>0$ for every $k\in[K]$ and $t=2,\ldots,T$, while $\sigma_t^k>0$ for all $k,t$. By the proof of Proposition \ref{prop:AR_pos_signs}, we have $\varepsilon_s^k = 1$ for every $k,s$, and the barycenter is $\widebar L_{t,s} = \sum_{k=1}^K \lambda_k \sigma_s^k \prod_{u=s+1}^t \alpha_u^k$. Hence, the barycenter is $AR(1)$ if and only if the ratio
\[
\frac{\widebar L_{t,s}}{\widebar L_{t-1,s}} = \frac{\sum_{k=1}^K \lambda_k \sigma_s^k \alpha_t^k \prod_{u=s+1}^{t-1} \alpha_u^k}{\sum_{k=1}^K \lambda_k \sigma_s^k \prod_{u=s+1}^{t-1} \alpha_u^k}
\]
is independent of $s$ for each $t$. This is a weighted average of $\alpha_t^k$ with weights proportional to $\lambda_k \sigma_s^k \prod_{u=s+1}^{t-1} \alpha_u^k$, which may depend on $s$. Thus, for $T \geq 3$, heterogeneous coefficients can destroy the $AR(1)$ consistency condition; generically, the barycenter is then not $AR(1)$.

\medskip
The column decomposition (Proposition \ref{prop:decomp}) gives an intuitive explanation. The barycenter problem decouples into independent Bures--Wasserstein problems for each noise column, one per time step. The $AR(1)$ property requires consistency across columns ($\widebar L_{t,s}/\widebar L_{t-1,s}$ independent of $s$), but since the columns are optimized independently, no mechanism enforces this structural constraint. In the positive-coefficient setting above, for $T = 2$ the condition is automatic because there is only one ratio to check, whereas for $T \geq 3$ it may fail.
\end{remark}

We construct an explicit example where the adapted and classical barycenter do not coincide. 

\begin{example}[The adapted Bures--Wasserstein barycenter vs.\ the classical Bures-Wasserstein barycenter]
Consider $K = 2$ processes with equal weights $\lambda_1 = \lambda_2 = \frac{1}{2}$, time horizon $T = 2$, and parameters $\sigma^1_1 = \sigma^2_1 = 1$, $\sigma^1_2 = \sigma^2_2 = 1$, $\alpha^1_2 = 0.5$, $\alpha^2_2 = -0.5$. By \eqref{eq:Lentries}, the Cholesky factors are
\[
L^1 = 
\begin{pmatrix} 
1 & 0 \\ 0.5 & 1
\end{pmatrix}, 
\qquad 
L^2 = 
\begin{pmatrix} 
1 & 0 \\ -0.5 & 1
\end{pmatrix}.
\]
We compute the adapted barycenter using the column decomposition of Proposition~\ref{prop:decomp}. For the first column ($t = 1$), the full block columns are $L_{\cdot,1}^1 = (1,\, 0.5)^\intercal$ and $L_{\cdot,1}^2 = (1,\, -0.5)^\intercal$. Since $\mathscr O(1) = \{+1, -1\}$, we need to find the signs $\varepsilon_1^1, \varepsilon_1^2 \in \{+1, -1\}$ that maximize $\|\frac{1}{2}(\varepsilon_1^1 L_{\cdot,1}^1 + \varepsilon_1^2 L_{\cdot,1}^2)\|_F^2$, as in the proof of Proposition~\ref{prop:AR_pos_signs}. By direct computation:
\begin{alignat*}{2}
&\varepsilon_1^1 = +1,\; \varepsilon_1^2 = +1: &\quad &\tfrac{1}{2}(1, 0.5)^\intercal + \tfrac{1}{2}(1, -0.5)^\intercal = (1,\, 0)^\intercal, \quad \text{norm}^2 = 1, \\
&\varepsilon_1^1 = +1,\; \varepsilon_1^2 = -1: &\quad &\tfrac{1}{2}(1, 0.5)^\intercal - \tfrac{1}{2}(1, -0.5)^\intercal = (0,\, 0.5)^\intercal, \quad \text{norm}^2 = 0.25,
\end{alignat*}
and the remaining two cases give the negatives of these, with the same norms. Hence $\varepsilon_1^1 = \varepsilon_1^2 = +1$ is optimal (up to a global sign flip), yielding $\widebar L_{\cdot,1} = (1, 0)^\intercal$. For the second column ($t = 2$), the full block columns are $L_{\cdot,2}^1=L_{\cdot,2}^2=(0,1)^\intercal$, so trivially $\widebar L_{\cdot,2}=(0,1)^\intercal$. Therefore, the adapted barycenter is
\begin{equation}\label{eq:OU:numresult}
\widebar L = \frac{1}{2}(L^1 + L^2) = \begin{pmatrix} 1 & 0 \\ 0 & 1\end{pmatrix} = I_2, 
\end{equation}
and in particular, $\widebar L$ has covariance matrix $\widebar\Sigma_\mathrm{ABW} = I_2$. In conclusion, as the off-diagonal entries of $L^1$ and $L^2$ cancel due to the opposite signs of $\alpha^1$ and $\alpha^2$, the adapted barycenter is a white noise process with no temporal correlation.

\medskip
On the other hand, the covariance matrices of the two input processes are
\[
\Sigma^1 = \begin{pmatrix} 1 & 0.5 \\ 0.5 & 1.25 \end{pmatrix}, \qquad \Sigma^2 = \begin{pmatrix} 1 & -0.5 \\ -0.5 & 1.25 \end{pmatrix}.
\]
Note that $\Sigma^1$ and $\Sigma^2$ share the same diagonal entries: the squaring $\Sigma^k = L^k(L^k)^\intercal$ preserves the sign of $\alpha^k$ in the off-diagonal entries, while the diagonal entries depend only on $|\alpha^k|$. Let $D=\diag(1,-1)$. Then $\Sigma^2=D\Sigma^1D$. The equal-weight Bures--Wasserstein objective is invariant under $\Sigma\mapsto D\Sigma D$, and its barycenter covariance is unique here because the inputs are positive definite. Consequently, $\widebar\Sigma_{\BW}=D\widebar\Sigma_{\BW}D$, which forces its off-diagonal entries to vanish. This symmetry argument, rather than arithmetic averaging of covariance matrices, explains the diagonal form below.
The covariance $\widebar\Sigma_{\BW}$ of the classical Bures--Wasserstein barycenter solves the fixed-point equation of \cite{AC11},
\begin{equation}\label{eq:AgCar_FPE}
\widebar\Sigma_{\BW} = \frac{1}{2}\bigl[(\widebar\Sigma_{\BW}^{\frac12}\Sigma^1\widebar\Sigma_{\BW}^{\frac12})^{\frac12} + (\widebar\Sigma_{\BW}^{\frac12}\Sigma^2\widebar\Sigma_{\BW}^{\frac12})^{\frac12}\bigr].
\end{equation}
We may numerically compute
\[
\widebar\Sigma_{\BW} \approx \begin{pmatrix} 0.934 & 0 \\ 0 & 1.199 \end{pmatrix} \neq I_2 = \widebar\Sigma_\mathrm{ABW}.
\]
The symmetry forces zero off-diagonal entries. The classical Bures--Wasserstein mean
nevertheless has diagonal entries different from one. By contrast, the adapted distance
operates columnwise on $L$, where the opposite autoregressive signs cancel directly in the
aligned factor average, yielding the white-noise covariance $\Sigma_\mathrm{ABW}=I_2$.
\end{example}

\section{Numerical illustrations}\label{sec:numerics}
We illustrate the difference between the adapted and classical Bures--Wasserstein barycenters through numerical experiments. We consider $K = 10$ univariate $AR(1)$ processes on a time horizon $T = 30$, with parameters paired symmetrically
\[
\alpha^k \in \{0.92,\, 0.85,\, 0.75,\, 0.60,\, 0.50,\, {-0.92},\, {-0.85},\, {-0.75},\, {-0.60},\, {-0.50}\},
\]
\[
\sigma^k \in \{1.0,\, 1.3,\, 0.8,\, 1.5,\, 1.1,\, 1.0,\, 1.3,\, 0.8,\, 1.5,\, 1.1\},
\]
and equal weights $\lambda_k = 1/10$. 
The symmetric pairing of the $\alpha$-parameters is the key design choice: each positive $\alpha^k$ is matched with its negative counterpart and the same volatility $\sigma^k$. For constant coefficients,
\[
L(-\alpha)_{t,s}=(-1)^{t-s}L(\alpha)_{t,s}.
\]
Thus, with the common orientations selected in this experiment, odd-lag entries of the adapted barycenter factor cancel, whereas even-lag entries reinforce. For the classical problem, if $D=\diag(1,-1,1,-1,\ldots)$, then $\Sigma(-\alpha)=D\Sigma(\alpha)D$. Uniqueness of the positive-definite Gaussian barycenter and invariance of its objective therefore imply $D\widebar\Sigma_{\BW}D=\widebar\Sigma_{\BW}$. Hence cross-parity entries vanish, but same-parity, even-lag dependence remains. Both barycenters consequently display a checkerboard structure rather than a diagonal one.

\medskip
The adapted barycenter factor, denoted $\widebar L_\mathrm{ABW}$ in this section, is computed via Algorithm~\ref{alg:ABW}, while the classical barycenter $\widebar\Sigma_{\BW}$ is computed via the Bures--Wasserstein iteration \eqref{eq:AgCar_FPE}. Here the common-noise property can be verified exactly. For every $k,\ell$ and $t$,
\[
\bigl(L_{\cdot,t}^k\bigr)^\intercal L_{\cdot,t}^\ell
=\sigma^k\sigma^\ell\sum_{j=0}^{T-t}(\alpha^k\alpha^\ell)^j>0,
\]
because $|\alpha^k\alpha^\ell|<1$. The trace argument in the proof of Proposition~\ref{prop:AR_pos_signs} therefore shows that $\mathbf P_t^\star=\mathbf 1_K\mathbf 1_K^\intercal$ is the unique local optimizer at every time, even though some individual factor entries are negative. Thus the common-noise condition holds and
\[
\widebar L_\mathrm{ABW}=\sum_{k=1}^K\lambda_kL^k
\]
is both a restricted and an unrestricted adapted barycenter factor. The two barycenters are used to generate sample paths from $\widebar X = \widebar L_\mathrm{ABW}\eps$ and $\widebar X_{\BW} = \widebar L_{\BW} \eps$, where $\widebar L_{\BW}$ is the Cholesky factor of $\widebar\Sigma_{\BW}$ and the same noise realization is used within each displayed pair.

\subsection*{Sample paths}
Figures \ref{fig:processes_adapted} and \ref{fig:processes_classical} show barycenter paths (thick lines) together with sample paths from the $10$ input processes (thin coloured lines). Figure \ref{fig:paths_comparison} overlays several paired barycenter paths; within each black/red pair, the same innovation realization is used. The classical paths are often more dispersed over this simulated horizon, but this pathwise observation is realization-dependent and should be interpreted together with the variance comparison below.

\begin{figure}[htbp]
\centering
\begin{subfigure}{0.49\textwidth}
\centering
\includegraphics[width=\linewidth]{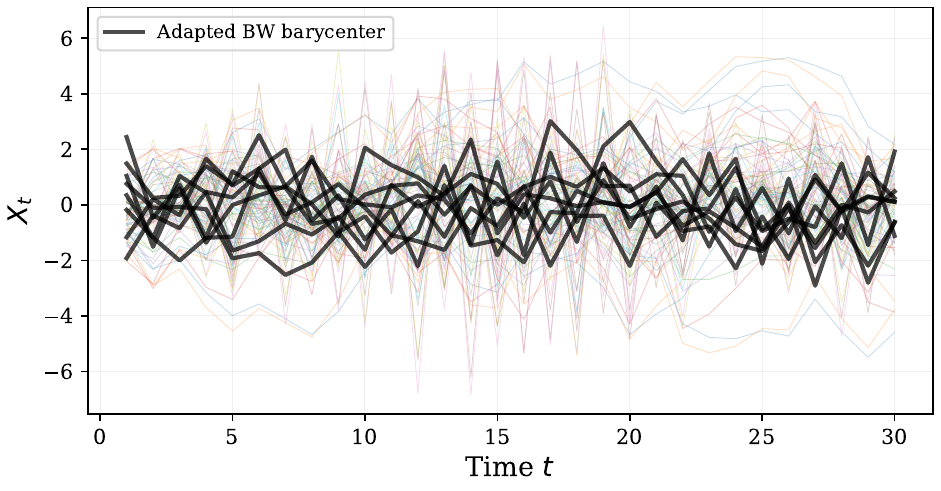}
\caption{Adapted barycenter}
\label{fig:processes_adapted}
\end{subfigure}
\begin{subfigure}{0.49\textwidth}
\centering
\includegraphics[width=\linewidth]{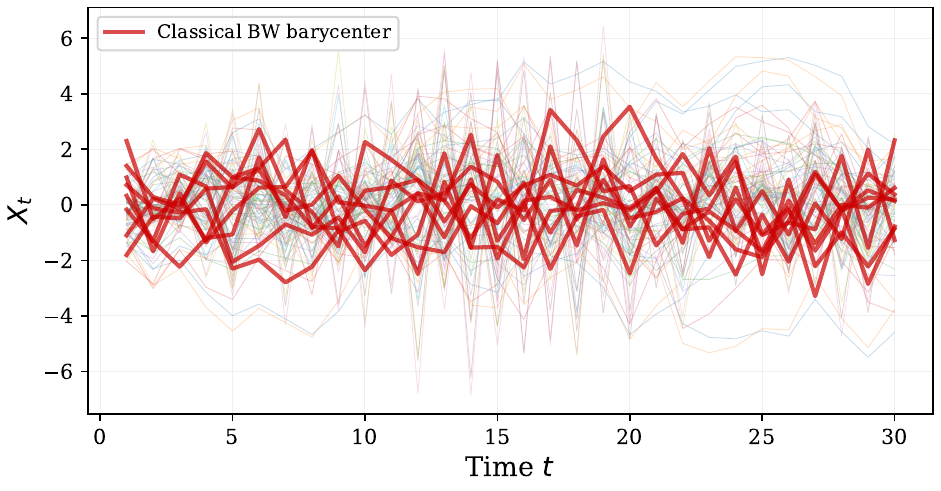}
\caption{Classical barycenter}
\label{fig:processes_classical}
\end{subfigure}
\caption{Sample paths of the input processes (thin, coloured) and the corresponding barycenters (thick).}
\end{figure}
\begin{figure}[htbp]
\centering
\includegraphics[width=0.49\textwidth]{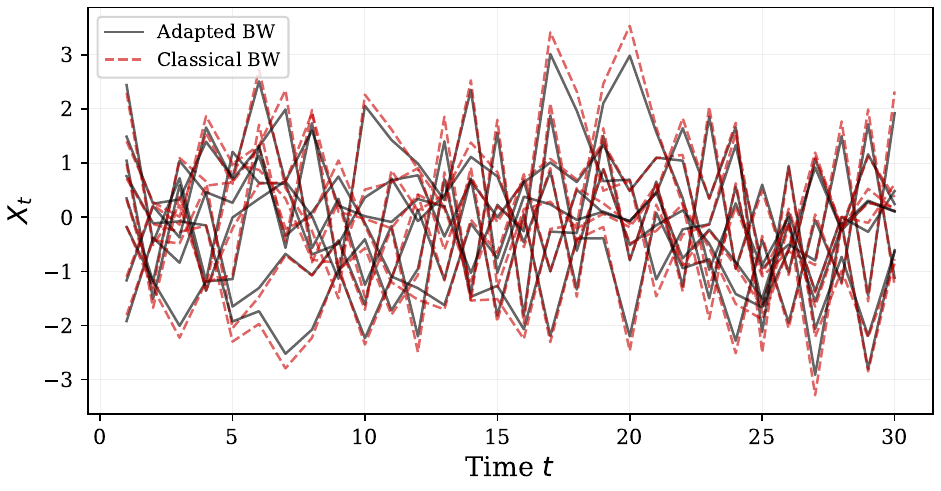}
\caption{Comparison of paired barycenter sample paths. Within each black/red pair, the same innovation realization is used. Solid black: adapted; dashed red: classical.}
\label{fig:paths_comparison}
\end{figure}
\subsection*{Second-order structure}
The key differences between the two barycenters are visible in their second-order statistics. Symmetric sign pairing imposes a parity pattern: odd-lag entries vanish, while even-lag entries remain. Consequently, both $\widebar L_\mathrm{ABW}$ and $\widebar L_{\BW}$ have a checkerboard sparsity pattern, and both barycenter processes retain material even-lag autocovariance. Their differences therefore concern both marginal variances and the magnitude of the surviving same-parity dependence.

\medskip
Figure~\ref{fig:variance} compares the marginal variance $\mathrm{Var}(X_t)$ over time. For the displayed parameters, the adapted variance is larger at the first time point ($1.2996$ versus approximately $1.1529$) and lower from the second time point onward. The growing separation later in the horizon is consistent with cancellation of odd-lag terms in the adapted factor, but it is a feature of this parameter choice rather than a general ordering theorem for adapted and classical barycenter variances.

\medskip
Figure~\ref{fig:covdecay} shows $\Cov(X_1, X_t)$, which measures how the initial state correlates with future values. The parity structure is explicit: odd lags are zero, while the nonzero even-lag values decrease in magnitude with the lag in this experiment. The adapted and classical curves differ on those surviving lags; the figure records an empirical comparison and does not imply a universal covariance ordering.

\begin{figure}[htbp]
\centering
\begin{subfigure}{0.48\textwidth}
\centering
\includegraphics[width=\linewidth]{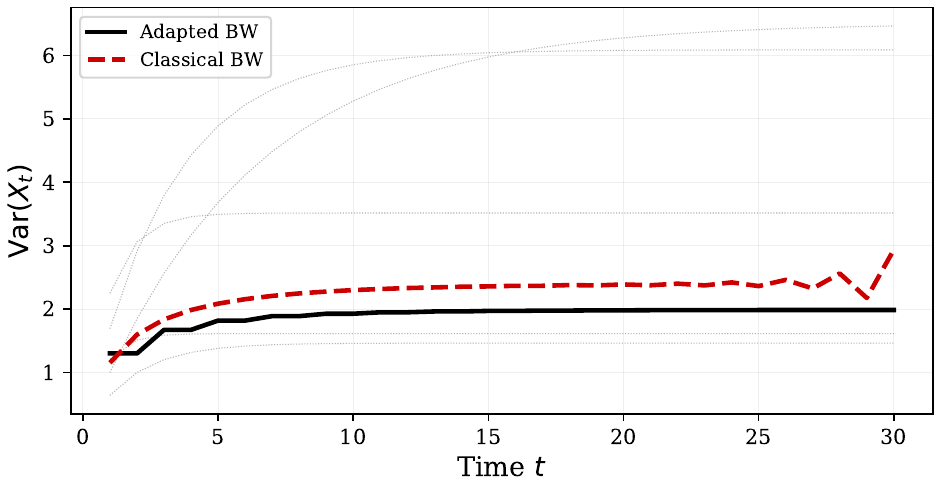}
\caption{Marginal variance $\mathrm{Var}(X_t)$ of the two barycenters.}
\label{fig:variance}
\end{subfigure}
\begin{subfigure}{0.48\textwidth}
\centering
\includegraphics[width=\linewidth]{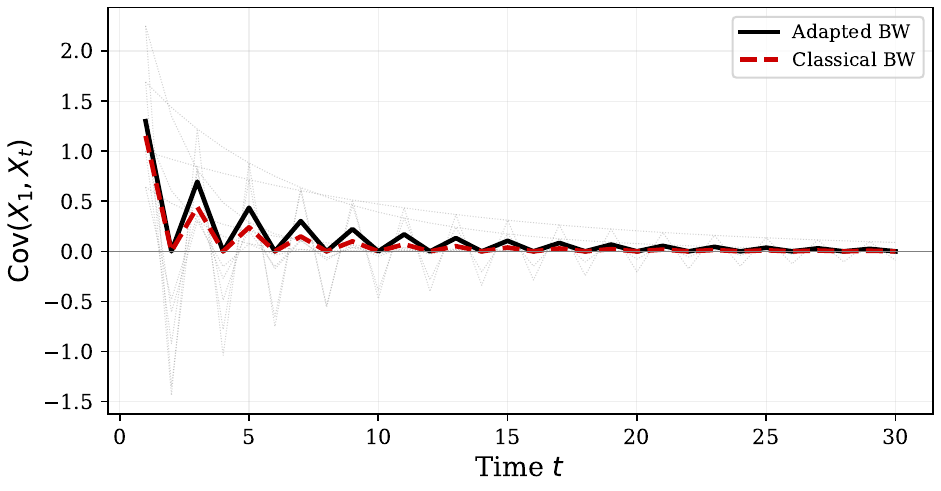}
\caption{Initial-time covariance $\Cov(X_1, X_t)$ of the two barycenters.}
\label{fig:covdecay}
\end{subfigure}
\caption{Second-order statistics of the two barycenters. Grey dotted lines: input processes.}
\end{figure}
\subsection*{Covariance matrices}
Figures \ref{fig:cov_diff} displays the difference between the covariance matrices $\widebar\Sigma_\mathrm{ABW} = \widebar L_\mathrm{ABW}\widebar L_\mathrm{ABW}^\intercal$ and $\widebar\Sigma_{\BW}$ as a heatmap. Both matrices have zero cross-parity entries and visible same-parity bands, producing the predicted checkerboard structure. The classical barycenter has larger diagonal entries through most of the horizon, consistent with Figure \ref{fig:variance}, and it also differs from the adapted barycenter on the surviving even-lag bands. The difference is dominated by the main diagonal but is not confined to it.
\begin{figure}[htbp]
\centering
\includegraphics[width=0.48\textwidth]{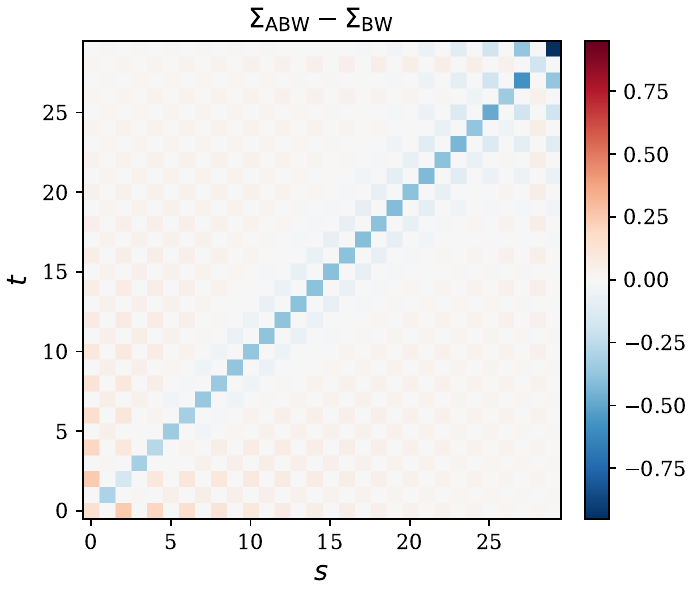}
\caption{Difference of the covariance matrices $\overline\Sigma_\mathrm{ABW} - \overline\Sigma_{\BW}$.}
\label{fig:cov_diff}
\end{figure}

\subsection*{Cholesky factors}
Figure \ref{fig:chol_diff} displays the difference between Cholesky factors $\widebar L_\mathrm{ABW}$ and $\widebar L_{\BW}$ (the lower-triangular Cholesky factor of $\widebar\Sigma_{\BW}$) as a heatmap. Recall that the entry $L_{t,s}$ encodes the influence of the innovation at time $s$ on the process at time $t$. The factors again show exact odd-lag zeros and nonzero even-lag entries. Their largest differences occur on the diagonal and in the early innovation columns, with additional discrepancies along the even subdiagonals.
\begin{figure}[htbp]
\centering
\includegraphics[width=0.48\textwidth]{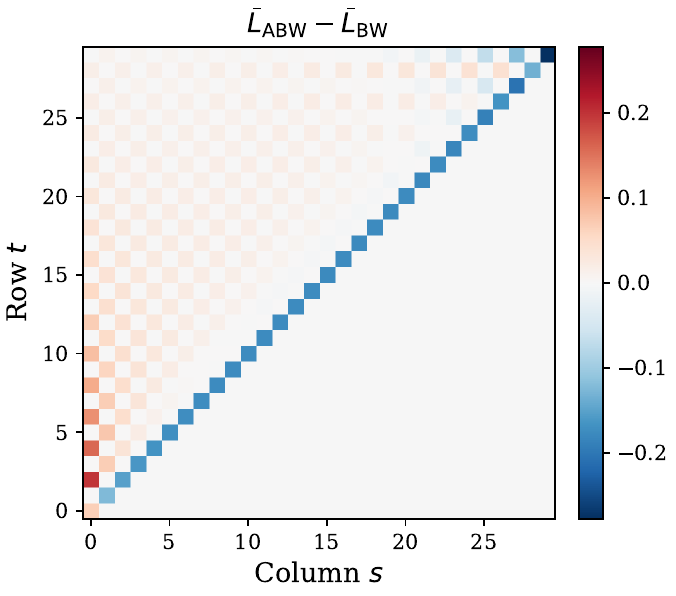}
\caption{Difference of the Cholesky factors $\overline{L}_\mathrm{ABW} - \overline{L}_{\BW}$.}
\label{fig:chol_diff}
\end{figure}

\subsection*{Discussion}
The experiment illustrates two distinct effects. At the factor level, opposite autoregressive signs cancel odd-lag contributions but preserve even-lag contributions. At the covariance level, conjugation symmetry forces the same cross-parity zeros for the classical barycenter, while leaving the magnitudes of the same-parity entries to be determined by Bures--Wasserstein geometry. The two barycenters therefore differ in both variance and surviving temporal dependence. This is consistent with the $K=2$, $T=2$ example of Section~\ref{sec:OU}, where no nonzero even off-diagonal lag is available and $\widebar\Sigma_\mathrm{ABW}=I_2\neq\widebar\Sigma_{\BW}\approx\diag(0.934,1.199)$.

\section{Conclusion}
We established existence of an adapted Wasserstein barycenter for filtered Gaussian inputs and showed that one may select a representative with Gaussian underlying law and enlarged innovations. The unrestricted problem decomposes exactly into classical Bures--Wasserstein barycenter problems for the successive innovation covariance contributions. An ordinary $d$-dimensional filtered Gaussian representative exists precisely when the local optimizers satisfy the common-noise rank criterion; without this property, the restricted filtered Gaussian problem has a strictly larger value in general. We also obtained sufficient uniqueness and regularity conditions, semidefinite optimality certificates, and an explicit solution of the martingale-constrained problem through martingale projection and increment barycenters.

\medskip
The semidefinite formulations provide globally verifiable solutions of the local unrestricted problems. The alternating Procrustes schemes are monotone and yield fixed-point candidates, but their nonconvex restricted formulation does not by itself guarantee convergence to a global minimizer. The autoregressive examples further show that neither ordinary filtered-Gaussian closure nor $AR(1)$ structure is automatic, and that sign symmetry eliminates odd-lag dependence rather than all temporal dependence. Extending these structural results beyond Gaussian inputs and discrete time remains an important direction for future work.

\section*{Acknowledgements}
F.\,M. acknowledges support from the ERC Advanced Grant NEITALG, grant agreement No.~101198055. J.W. acknowledges support from the Novo Nordisk Foundation and the Villum Foundation. T.-K.~L.~W.~acknowledges support from the NSERC Discovery Grant RGPIN-2025-06021.

\begin{center}
  \FundingLogos
  
  \vspace{0.5em}
  \begin{tcolorbox}\centering\small
   
    Funded by the European Union. Views and opinions expressed are however those of the author(s) only and do not necessarily reflect those of the European Union or the European Research Council Executive Agency. Neither the European Union nor the granting authority can be held responsible for them. This project has received funding from the European Research Council (ERC) under the European Union’s Horizon Europe research and innovation programme (grant agreement No. 101198055, project acronym NEITALG).
    
  \end{tcolorbox}
\end{center}

\bibliographystyle{abbrv}
\bibliography{references,transport}

\end{document}